\DeclareMathOperator{\Gr}{Gr}
\DeclareMathOperator{\Gal}{Gal}
\DeclareMathOperator{\rank}{rank}
\DeclareMathOperator{\ord}{ord}
\DeclareMathOperator{\corank}{corank}
\DeclareMathOperator{\Sel}{Sel}
\DeclareMathOperator{\coker}{coker}
\DeclareMathOperator{\cyc}{cyc}
\DeclareMathOperator{\length}{length}
\renewcommand{\div}{\mathrm{div}}
\newtheorem{theorem}{Theorem}[section]
\newtheorem*{theorem*}{Theorem}
\newtheorem{lemma}[theorem]{Lemma}
\newtheorem{proposition}[theorem]{Proposition}
\newtheorem{corollary}[theorem]{Corollary}
\newtheorem{defn}[theorem]{Definition}
\numberwithin{equation}{section}
\newtheorem{lthm}{Theorem} 
\newtheorem{remark}[theorem]{Remark}
\newcommand{\Tac}{\mathbf{T}^\mathrm{ac}}
\newcommand\EatDot[1]{}
\newcommand{\Ep}{E[p^\infty]}
\newcommand{\Tp}{T_pE}
\newcommand{\tEpm}{\widetilde{E}^\pm}
\newcommand{\cE}{{\mathcal{E}}}
\newcommand{\cL}{{\mathcal{L}}}
\newcommand{\cM}{{\mathcal{M}}}
\newcommand{\CC}{\mathbf{C}}
\newcommand{\Cp}{\CC_p}
\newcommand{\cS}{{\mathcal{S}}}
\newcommand{\QQ}{\mathbf{Q}}
\newcommand{\ZZ}{\mathbf{Z}}
\newcommand{\Qp}{\mathbf{Q}_p}
\newcommand{\Zp}{\mathbf{Z}_p}
\newcommand{\lb}{\llbracket}
\newcommand{\rb}{\rrbracket}
\definecolor{Green}{rgb}{0.0, 0.5, 0.0}
\newcommand{\Qcyc}{\QQ_{\cyc}}
\newcommand{\Q}{\mathbf{Q}}
\newcommand{\Z}{\mathbf{Z}}
\newcommand{\p}{\mathfrak{p}}
\newcommand{\Tr}{\mathrm{Tr}}
\newcommand{\green}[1]{\textcolor{Green}{#1}}
\newcommand{\blue}[1]{\textcolor{blue}{#1}}
\newcommand{\BDP}{\mathrm{BDP}}
  \DeclareFontFamily{U}{wncy}{}
  \DeclareFontShape{U}{wncy}{m}{n}{<->wncyr10}{}
  \DeclareSymbolFont{mcy}{U}{wncy}{m}{n}
  \DeclareMathSymbol{\sha}{\mathord}{mcy}{"58}
  \DeclareMathSymbol{\zhe}{\mathord}{mcy}{"11}
\title[Sha,  anticyclotomic extensions, supersingular primes]{Asymptotic formula for Tate--Shafarevich groups of $p$-supersingular elliptic curves over anticyclotomic extensions}
\let\@wraptoccontribs\wraptoccontribs
\author{Antonio Lei}
\address{Antonio Lei\newline Department of Mathematics and Statistics\\University of Ottawa\\
150 Louis-Pasteur Pvt\\
Ottawa, ON\\
Canada K1N 6N5}
\email{antonio.lei@uottawa.ca}
\author[M.F.~Lim]{Meng Fai Lim}
\address[Lim]{School of Mathematics and Statistics \& Hubei Key Laboratory of Mathematical Sciences\\ Central China Normal University\\ Wuhan\\ 430079\\ P.R.China.}
\email{limmf@ccnu.edu.cn}
\author[K. Müller]{Katharina Müller}
\address[Müller]{D\'epartement de Math\'ematiques et de Statistique\\
Universit\'e Laval, Pavillion Alexandre-Vachon\\
1045 Avenue de la M\'edecine\\
Qu\'ebec, QC\\
Canada G1V 0A6}
\email{katharina.mueller.1@ulaval.ca}
\subjclass[2020]{11R23 (primary); 11G05, 11R20 (secondary)}
\keywords{Anticyclotomic Iwasawa theory, elliptic curves, supersingular primes, Tate--Shafarevich groups.}
\begin{document}
\begin{abstract}
Let $p\ge 5$ be a prime number and $E/\mathbf{Q}$ an elliptic curve with good supersingular reduction at $p$. Under the generalized Heegner hypothesis, we investigate the $p$-primary subgroups of the Tate--Shafarevich groups of $E$ over number fields contained inside the anticyclotomic $\mathbf{Z}_p$-extension of an imaginary quadratic field where $p$ splits.
\end{abstract}

\maketitle

\section{Introduction}
\label{S: Intro}
Throughout this article, $p\geq 5$ is a fixed prime number. In Iwasawa theory, one is interested in the asymptotic behaviour of arithmetic objects in a tower of number fields. In the seminal work of Iwasawa \cite{Iw}, he proved that if  $k$ is a number field and $k_\infty$ is a $\Zp$-extension of $k$, then there exist integers $\mu_k$, $\lambda_k$ and $\nu_k$ such that
\[
\ord_p(h_m)=\mu_k p^m+\lambda_k m+\nu_k
\]
for $m\gg0$, where $h_m$ denotes the class number of the unique sub-extension of $k_\infty/k$ of degree $p^m$.

Ideas of Iwasawa have been generalized to the setting of abelian varieties by Mazur \cite{mazur72}. In particular, if $A/k$ is an abelian variety with  good ordinary reduction at all primes above $p$ and that the $p$-primary Selmer group over $k_\infty$ is cotorsion over the Iwasawa algebra of $\Gal(k_\infty/k)$ over $\Zp$, they showed via a control theorem on the Selmer groups that there exists integers $\mu_A$, $\lambda_A$ and $\nu_A$ such that for $m\gg0$,
\begin{equation}
    \ord_p(\sha(A/k_m)[p^\infty])=\mu_A p^m+\lambda_A m+\nu_A,
\label{eq:sha-ord}
\end{equation}
where $\sha(A/k_m)$ denotes the Tate--Shafarevich group of $A$ over $k_m$, which is assumed to be finite. (Note that the formula in \cite[P.185]{mazur72} assumes that the Mordell--Weil ranks over $k_m$ are zero; this assumption can be removed thanks to the work of  Greenberg \cite[Theorem~1.10]{Greenberg}.)

The supersingular case is more delicate since the aforementioned control theorem for Selmer groups does not hold. Let $E/\QQ$ be an elliptic curve with good supersingular reduction at $p$. Note that $a_p(E)=0$ since $p\geq 5$. In \cite{kobayashi03}, Kobayashi defined the so-called plus and minus Selmer groups over the cyclotomic $\Zp$-extension $\Qcyc$ of $\QQ$ by modifying the local conditions of the classical Selmer groups at $p$. It turns out that these new Selmer groups do satisfy a control theorem, which results in the following formula. There exist integers $\mu_E^\pm$, $\lambda_E^\pm$ and $\nu_E$ such that for $m\gg0$,
\begin{align}
\ord_p(\sha(E/k_m)[p^\infty])=\sum_{k=0}^{\lfloor\frac{m-2}{2}\rfloor}p^{m-1-2k}-\left\lfloor\frac{m}{2}\right\rfloor+\left\lfloor\frac{m}{2}\right\rfloor\lambda_E^++\left\lfloor\frac{m+1}{2}\right\rfloor\lambda_E^-\notag\\
-mr_\infty+\sum_{k=1}^{\lfloor\frac{m}{2}\rfloor}\phi(p^{2k})\mu_E^++\sum_{k=1}^{\lfloor\frac{m+1}{2}\rfloor}\phi(p^{2k-1})\mu_E^-+\nu_E,\label{eq:kob}
\end{align}
where $\phi$ is the Euler totient function and $r_\infty$ is given by $\displaystyle\lim_{m\rightarrow\infty}\rank E(k_m)$, which is known to be finite thanks to the groundbreaking work of Kato \cite{Kato} on Euler systems. Here, $k_m$ is the sub-extension of $\Qcyc/\QQ$ of degree $p^m$. Kobayashi's result has been generalized to the case $a_p(E)\ne 0$ (but still assuming $E$ has good supersingular reduction at $p$) by Sprung \cite{sprung13}.

In the present article, we study how Tate--Shafarevich groups behave over anticyclotomic $\Zp$-extensions. Let $E/\Q$ be an elliptic curve of conductor $N$ and $p\ge 5$ is a prime of supersingular reduction (in particular, the Weil bound implies that $a_p(E)=0$). We write $N=MD$ where $D$ is a square-free product of an even number of primes. Let $K$ be an imaginary quadratic field.  Throughout this article, we assume that all primes dividing $pM$ split in $K$, whereas those dividing $D$ are inert in $K$ (this is commonly known as the generalized Heegner hypothesis). This setting is often referred as the "indefinite case" (as opposed to the "definite case" where $D$ is the product of an odd number of primes). As we shall illustrate below, arithmetic objects associated to an elliptic curve in this setting often have different behaviours from the cyclotomic setting. The main divergence originates from the sign of the functional equation of $E$ over an anticyclotomic extension of $K$ being $-1$, which forces the Mordell--Weil ranks to be unbounded.

Let $K_\infty$ be the anticyclotomic $\Z_p$-extension of $K$. Let $K_n$ be the unique sub-extension of $K_\infty/K$ of degree $p^n$. Let $\Gamma$ denote the Galois group of $K_\infty/K$ and write $\Lambda$ for the Iwasawa algebra $\Zp\lb \Gamma\rb$. Throughout, we assume that both primes above $p$ are totally ramified in $K_\infty/K$ (which is equivalent to $p$ not dividing the class number of $K$).

In \cite{ciperiani}, Çiperiani proved that  if $D=1$,  the Pontryagin dual of the $p$-primary part of $\sha(E/K_\infty)$ is a torsion $\Lambda$-module. We shall extend this result to our current setting where $D$ is not necessarily 1 (see Corollary \ref{Sha tor}). Since there is no known control theorem for Tate--Shafarevich groups, the cotorsionness of the $p$-primary part of $\sha(E/K_\infty)$ does not directly give an asymptotic formula for the size of  $\sha(E/K_m)[p^\infty]$ as $m\rightarrow\infty$. Nonetheless, this cotorsionness property is in stark constrast to the cyclotomic situation, where the $p$-primary part of $\sha(E/\Qcyc)$ is of corank one over the cyclotomic Iwasawa algebra (which is a consequence of $\rank_{\ZZ}E(\Qcyc)$ being finite  and \cite[Propoisition~6.3]{lei11compositio}). Moreover, when $D=1$, Matar showed in \cite{matar21} that if the Heegner point on $E(K)$ is not $p$-divisible and the Tamagawa numbers of $E$ is not divisible by $p$, then $\sha(E/K_m)[p^\infty]$ is trivial for all $m\ge0$. Note that the context of Matar's work may be considered as the "most basic case" in Iwasawa theory, similar to the one studied by Kurihara and Pollack in the cyclotomic setting (see \cite{kurihara02,pollack05}). These observations suggest that the size of the Tate--Shafarevich groups over $K_m$ may potentially grow slower than the cyclotomic counterparts. 


We now recall that for the cyclotomic $\Zp$-extension of $\QQ$, the "most basic case" in \eqref{eq:kob} gives a formula of the form 
\begin{equation}
    \sum_{k=0}^{\lfloor\frac{m-2}{2}\rfloor}p^{m-1-2k}-\left\lfloor\frac{m}{2}\right\rfloor\label{eq:kur}.
\end{equation}
See also the work of Iovita--Pollack in \cite[\S5]{iovitapollack06}, where the size of the Tate--Shafarevich groups over a $\Zp$-extension that arises from a Lubin--Tate formal group of height one in the "most basic case" has been studied. The hypotheses in loc. cit. imply that the plus and minus Selmer groups over the $\Zp$-extension are finite. In particular, this does not apply to the "indefinite" anticyclotomic setting since these Selmer groups are known to be of corank one over $\Lambda$, thanks to the work of Longo--Vigni \cite{longovigni}. However, the work of Iovita--Pollack can apply to  the "definite" anticyclotomic setting. 

If we compare the formula \eqref{eq:kur} with the result of Matar (which says that $\sha(E/K_m)$ is trivial for all $m$), it suggests that in the anticyclotomic "indefinite" setting, we should not expect the terms appearing in \eqref{eq:kur} to play a role in the growth of $\sha(E/K_m)[p^\infty]$ in general.
In the present paper, we prove that this is indeed the case. In particular, we prove the following theorem, which generalizes Matar's result removing the assumptions that $D=1$, that the Tamagawa numbers of $E$ is not divisible by $p$ and that the Heegner point on $E(K)$ is not $p$-divisible.

\begin{lthm}\label{main thm}
Let $E/\QQ$ be an elliptic curve with good supersingular reduction at $p\ge 5$. Assume further that the representation \[\rho\colon G_{\Q}\to \textup{Aut}(T_p(E))\subset \textup{GL}_2(\Z_p).\] is surjective and that $\sha(E/K_m)[p^\infty]$ is finite for all $m\ge0$. Then, there exist integers $\lambda_{E,K}^\pm$, $\mu_{E,K}^\pm$ and $\nu_{E,K}$ such that 
\begin{align*}
\ord_p\left(\left|\sha(E/K_m)[p^\infty]\right|\right)&=\sum_{k\le m, k \textup{ even }}\mu_{E,K}^+ \phi(p^k)+\sum_{k\le m, k\textup{ odd }}\mu_{E,K}^-\phi(p^k)\\&+\left \lfloor \frac{m}{2}\right\rfloor\lambda_{E,K}^++\left \lfloor \frac{m+1}{2}\right\rfloor\lambda_{E,K}^-+\nu_{E,K}.\end{align*}
for all $m\gg0$.
\end{lthm}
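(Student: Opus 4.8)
The plan is to run the standard Iwasawa-theoretic strategy: pass from $\sha$ to Selmer groups, from Selmer groups to their Pontryagin duals as $\Lambda$-modules, and then use the structure theory of finitely generated torsion $\Lambda$-modules together with the doubly-signed (plus/minus) local conditions at the two primes above $p$. More precisely, I would first fix the plus/minus Selmer groups $\Sel^\pm(E/K_\infty)$ obtained by imposing Kobayashi-style local conditions at each of the two primes $\mathfrak p,\overline{\mathfrak p}$ above $p$ (with $p$ split in $K$, one gets an honest pair of sign choices, exactly as in the work of Longo--Vigni cited in the introduction). By Longo--Vigni these Selmer groups are $\Lambda$-cotorsion of corank one after accounting for the Heegner point, and by the extension of Çiperiani's theorem (Corollary~\ref{Sha tor}) the dual of $\sha(E/K_\infty)[p^\infty]$ is $\Lambda$-torsion. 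The first real step is a \emph{control theorem} for the signed Selmer groups: show that the natural maps $\Sel^\pm(E/K_m)\to \Sel^\pm(E/K_\infty)^{\Gamma_m}$ have finite kernel and cokernel of size bounded independently of $m$ (or with explicitly controlled growth), using the surjectivity of $\rho$ to kill the global error terms and a local analysis at $\mathfrak p,\overline{\mathfrak p}$ of the plus/minus norm-compatible points, whose $\Gamma_m$-cohomology is where the parity-dependent $\lfloor m/2\rfloor$ versus $\lfloor (m+1)/2\rfloor$ asymmetry enters.

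Next I would translate the control theorem into a statement about $\ord_p$ of the finite groups. Writing $X^\pm=\Sel^\pm(E/K_\infty)^\vee$, which is a finitely generated torsion $\Lambda$-module, its characteristic ideal has the form $\prod_i (f_i)$ with $f_i$ either a power of $p$ (contributing to $\mu$) or a distinguished polynomial (contributing to $\lambda$). A classical computation (essentially the one behind Iwasawa's formula, adapted to evaluate corank-type quantities of $X^\pm/\omega_m X^\pm$ where $\omega_m$ is the appropriate norm element) gives $\ord_p(|X^\pm/\nu_{m,n}X^\pm|)$ as a sum of $\mu^\pm\phi(p^k)$ terms over $k$ of the correct parity plus a linear-in-$m$ term $\lambda^\pm\lfloor m/2\rfloor$ or $\lambda^\pm\lfloor(m+1)/2\rfloor$ plus a bounded error; the parity restriction on $k$ is forced by the fact that the plus (resp.\ minus) conditions only ``see'' even (resp.\ odd) layers through the vanishing pattern of the relevant cyclotomic-type polynomials $\omega_{2k}/\omega_{2k-2}$, $\omega_{2k-1}/\omega_{2k-3}$. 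Summing the contributions of $X^+$ and $X^-$, subtracting the (bounded, eventually constant) contribution of the Mordell--Weil part and the $\sha$-versus-Selmer discrepancy via the Cassels--Poitou--Tate exact sequence, and absorbing all bounded terms into $\nu_{E,K}$, yields the asserted formula with
\[
\ord_p\!\left(|\sha(E/K_m)[p^\infty]|\right)=\!\!\sum_{\substack{k\le m\\ k\text{ even}}}\!\!\mu_{E,K}^+\phi(p^k)+\!\!\sum_{\substack{k\le m\\ k\text{ odd}}}\!\!\mu_{E,K}^-\phi(p^k)+\left\lfloor\tfrac m2\right\rfloor\lambda_{E,K}^++\left\lfloor\tfrac{m+1}2\right\rfloor\lambda_{E,K}^-+\nu_{E,K}
\]
for $m\gg0$.

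The main obstacle, and the place where the surjectivity of $\rho$ and the indefinite hypothesis really get used, is the \emph{local computation at the primes above $p$}. Unlike the cyclotomic case treated by Kobayashi, here one must simultaneously handle two primes, track how the signed local conditions interact with the Heegner point (which makes the corank jump from $0$ to $1$ and therefore contributes the linear term rather than merely a bounded one), and verify that the ``most basic case'' terms of \eqref{eq:kur} genuinely drop out — i.e.\ that there is no $p^{m-1-2k}$-type exponential contribution beyond the $\phi(p^k)$ coming from $\mu^\pm$. This requires showing that the relevant local $\Lambda$-modules (the duals of $\widehat E(K_{m,\mathfrak p})^\pm$ and their $\overline{\mathfrak p}$-analogues) have $\mu=\lambda=0$ up to bounded error, so that the only surviving non-trivial growth is governed by the \emph{global} $\mu^\pm$ and $\lambda^\pm$; establishing this vanishing — presumably via a norm-compatible system of points and a comparison with the formal group computation — is the technical heart of the argument. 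A secondary subtlety is ensuring the finiteness-of-$\sha$ hypothesis propagates correctly through the CPT sequence so that all the groups in sight are genuinely finite and the $\ord_p$'s are well defined; this is where the hypothesis in the statement is invoked.
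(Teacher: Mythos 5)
Your overall skeleton (pass from $\sha$ to Selmer groups, prove a control theorem, apply the structure theory of torsion $\Lambda$-modules, split by parity via the $\omega_m^{\pm}$) has the right general shape, but two of your load-bearing steps fail in the indefinite anticyclotomic setting, and these are precisely the points where the paper is forced onto a different route.

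First, the signed Selmer groups $\Sel^{\pm}(E/K_\infty)$ are \emph{not} $\Lambda$-cotorsion here: by Longo--Vigni their Pontryagin duals have $\Lambda$-rank one, which is why the paper explicitly warns that the constants $\lambda_{E,K}^{\pm}$, $\mu_{E,K}^{\pm}$ are \emph{not} the Iwasawa invariants of the plus/minus Selmer groups. So "the characteristic ideal of $X^{\pm}$" does not exist in the form you invoke it, and the phrase "after accounting for the Heegner point" hides the entire difficulty rather than resolving it. The paper's substitute is the BDP Selmer group $\Sel^{\BDP}=\Sel^{\emptyset,0}$ (relaxed at $v$, strict at $\overline v$): its dual \emph{is} torsion (Theorem~\ref{thm:torsion-BDP}, proved by feeding the Heegner classes into a Poitou--Tate sequence), and it satisfies an honest control theorem with $\Gamma_m$-invariants (Theorem~\ref{thm:control}), yielding the unsigned growth $\mu_0 p^m+(\lambda_0-\lambda')m+\nu_0$ for $|\Sel^{\BDP}(E/K_m)/\mathrm{div}|$.

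Second, your plan to "subtract the (bounded, eventually constant) contribution of the Mordell--Weil part" is incorrect: in the indefinite setting $\rank E(K_m)$ is unbounded (it jumps by $\phi(p^m)$ at every layer for $m\gg0$, by Theorem~\ref{thm:vatsal}), so the Mordell--Weil contribution cannot be absorbed into $\nu_{E,K}$. The paper instead relates $\Sel^{\BDP}$ to $\sha$ by a Jetchev--Skinner--Wan type identity (Theorem~\ref{BDP sha}), namely $|\Sel^{\BDP}(E/K_m)/\mathrm{div}|=|\sha(E/K_m)[p^\infty]|\cdot\delta_{m,v}\delta_{m,\overline v}$, where $\delta_{m,\p}$ measures the non-divisible part of the kernel of the localization map $E(K_m)\otimes\Qp/\Zp\to E(K_{m,\p})\otimes\Qp/\Zp$. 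The entire parity-dependent part of the final formula (everything beyond the uniform BDP contribution) comes from the growth of these localization kernels, which the paper analyzes either via the plus/minus Heegner modules $\cE_m^{\pm}$ and Kobayashi ranks, or via a plus/minus decomposition of the kernels with Lee-style control theorems. Your proposal contains no mechanism for producing or controlling these kernels, so as written the argument cannot be completed.
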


The proof of Theorem~\ref{main thm} is divided into three steps:
\begin{itemize}
    \item[(1)] We first study in \S\ref{S:BDP} some basic properties of the so-called BDP  Selmer groups over sub-extensions of $K_\infty$ and prove a control theorem for these Selmer groups (BDP stands for Bertolini--Darmon--Prasanna since these Selmer groups are related to the $p$-adic $L$-function constructed in \cite{bertolinidarmonprasanna13}). In particular, we show that the size of $\Sel^\BDP(E/K_m)$ quotient out by the maximal divisible subgroup (which we will denote by $\Sel^\BDP(E/K_m)/\div$) satisfies a similar formula to the one given by  \eqref{eq:sha-ord}, see in particular Corollary~\ref{cor:BDP-growth}. (The definition of these Selmer groups will be reviewed in \S\ref{S:Sel}.) 
    \item[(2)] In \S\ref{S:JSW}, we show that the size of $\sha(E/K_m)[p^\infty]$ differs from the size of $\Sel^\BDP(E/K_m)/\div$ (as studied in the previous step) by the size of the kernels of certain localization maps.  A similar result was first proved by Jetchev--Skinner--Wan in \cite{jsw} and Agboola--Castella in \cite{agboolacastellaanti} under more restrictive hypotheses.  See  Theorem~\ref{BDP sha} for the precise statement.
    \item[(3)] In \S\ref{S:kernel}, we study the growth of the kernels that appear in the previous step  (see Corollary~\ref{cor:final} for the precise statement). We do so in two different ways. The first is to make use of the modules of plus and minus Heegner points introduced by Longo--Vigni in \cite{longovigni}  and Kobayashi ranks introduced in \cite{kobayashi03} (see Definitions~\ref{def:pmHeeg} and \ref{def:kob} where these concepts are reviewed). The second is to decompose the kernels in "plus" and "minus" parts and show that they satisfy certain control theorems, using ideas of Lee \cite{lee20}.
\end{itemize}

\begin{remark}
In \eqref{eq:kob}, the constants $\lambda_E^\pm$ and $\mu_E^\pm$ are the Iwasawa invariants of Kobayashi's plus and minus Selmer groups. As mentioned earlier, such Selmer groups in the "indefinite" anticyclotomic setting have been studied by Longo--Vigni in \cite{longovigni}. Unlike the cyclotomic counterparts, they are  of corank one over $\Lambda$. The constants $\lambda_{E,K}^\pm$ and $\mu_{E,K}^\pm$ in the statement of Theorem~\ref{main thm} are not directly related to the aforementioned plus and minus Selmer groups. Instead, they are given by the Iwasawa invariants of $\Sel^\BDP(E/K_\infty)^\vee$ and certain torsion $\Lambda$-modules (which arise from the plus and minus Heegner points introduced in \cite{longovigni}, or alternatively, given by the kernels of certain "plus" and "minus" localization maps; see Remarks~\ref{rk:final} and \ref{rk:final2} for details).
\end{remark}

\subsection*{Acknowledgement}
We thank Mirela Çiperiani, Ralph Greenberg, Anwesh Ray and Stefano Vigni for interesting discussions on subjects related to this article. We also thank the anonymous referee for many helpful comments and suggestions. The authors' research is supported by the NSERC Discovery Grants Program RGPIN-2020-04259 and RGPAS-2020-00096 (Lei and Müller) and the National Natural Science Foundation of China under Grant No. 11771164 (Lim). 
\section{Notation}\label{sec:notation}
We denote the absolute Galois group of $\Q$ by $G_{\Q}$ and the $p$-adic Tate module of $E$ by $T_p(E)$. Then we have a natural representation
\[\rho\colon G_{\Q}\to \textup{Aut}(T_p(E))\subset \textup{GL}_2(\Z_p).\]
We assume  that $\rho$ is surjective, in order to apply results in \cite{longovigni}.

Throughout this article, $K$ is a fixed imaginary quadratic field and $K_\infty/K$ is the anticyclotomic $\Zp$-extension. The sub-extension of degree $p^m$ is denoted by $K_m$. We assume that  $\sha(E/K_m)[p^\infty]$ is finite for all $m$.
Recall from the introduction that the conductor of $E$ is written as $N=MD$, where $D$ is a square-free product of an even number of primes and that the primes dividing $pM$ are assumed to be split in $K$, whereas those dividing $D$ are assumed to be inert in $K$. We assume that both primes above $p$ are totally ramified in $K_\infty/K$.

Let $v$ and $\overline{v}$ be the primes of $K$ lying above $p$. We fix an embedding $\iota_p:\overline\QQ\hookrightarrow \Cp$, and let $v$ be the prime induced by $\iota_p$. For $\p\in\{v,\overline{v}\}$, we write $\p_m$ for the unique prime of $K_m$ lying above $\p$. When confusion does not arise, we may omit $m$ from the notation and simply write $\p$.

We recall the following definition of "Kobayashi ranks" from \cite[\S10]{kobayashi03}.

\begin{defn}\label{def:kob}
Let $(M_n)_{n\ge1}$ be a projective system of finitely generated $\Zp$-modules with the connecting maps $\pi_n:M_n\rightarrow M_{n-1}$. If the kernel and the cokernel of $\pi_n$ are both finite, we define
    \[
     \nabla M_n=\length_{\Zp}\ker\pi_n-\length_{\Zp}\coker\pi_n+\dim_{\Qp}M_{n-1}\otimes_{\Zp}\Qp.
    \]
\end{defn}

Throughout this article, $\phi$ denotes the Euler totient function. We recall the following lemma of Kobayashi:
\begin{lemma}\label{lem:kob-rank}
Let $M$ be a finitely generated torsion $\Lambda$-module. Then, for $n\gg0$, $\nabla M_{\Gamma_n}$ is defined and is equal to $\phi(p^n)\mu(M)+\lambda(M)$, where $\mu(M)$ and $\lambda(M)$ denote the Iwasawa $\mu$-invariant and $\lambda$-invariant of the $\Lambda$-module $M$.
\end{lemma}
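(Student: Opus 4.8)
The plan is to prove Lemma~\ref{lem:kob-rank} by reducing, via the structure theorem for finitely generated torsion $\Lambda$-modules, to the case of elementary modules and then computing $\nabla$ directly for each type of elementary factor.

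First I would recall that a finitely generated torsion $\Lambda$-module $M$ is pseudo-isomorphic to an elementary module $E_M=\bigoplus_i \Lambda/(p^{a_i})\oplus\bigoplus_j\Lambda/(f_j(T)^{b_j})$, where the $f_j$ are distinguished irreducible polynomials; by definition $\mu(M)=\sum_i a_i$ and $\lambda(M)=\sum_j b_j\deg f_j$. The quantities $\mu$ and $\lambda$ are additive in short exact sequences of torsion modules, and I claim $\nabla$ enjoys the same additivity in the limit: given $0\to M'\to M\to M''\to 0$, applying $-\otimes_\Lambda \Zp[\Gamma_n]$ (equivalently taking $\Gamma_n$-coinvariants) yields the snake-lemma exact hexagon relating the $M_{\Gamma_n}$ and $M^{\Gamma_n}$; since all the terms involved are finite for $n\gg0$ (this uses that $M$, being torsion, has $M^\Gamma$ and $M_\Gamma$ of bounded — in fact eventually the relevant finiteness — size, which follows because $\Char(M)$ is coprime to $\omega_n/\omega_{n-1}$ for $n\gg 0$), an alternating-length count shows $\nabla M_n=\nabla M'_n+\nabla M''_n$ for $n\gg0$. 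The pseudo-null kernel and cokernel of $M\to E_M$ contribute $0$ to $\nabla$ in the limit since a pseudo-null module is finite and killed by $\omega_n/\omega_{n-1}$ eventually, so its coinvariants stabilize and $\nabla$ of it vanishes. Hence it suffices to treat $M=\Lambda/(g)$ for $g\in\{p^a, f(T)^b\}$.

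For $M=\Lambda/(p^a)$: identifying $\Lambda\cong\Zp\lb T\rb$ and $\Gamma_n$-coinvariants with quotient by $\omega_n(T)=(1+T)^{p^n}-1$, we get $M_{\Gamma_n}=\Zp[T]/(p^a,\omega_n)\cong (\ZZ/p^a)[T]/(\omega_n)$, a free $\ZZ/p^a$-module of rank $p^n$, and similarly for $n-1$; the connecting map $\pi_n$ is the norm/projection, whose kernel and cokernel are finite of lengths that one computes, giving $\nabla M_n=\phi(p^n)a$ for $n\gg 0$ — matching $\phi(p^n)\mu(M)+\lambda(M)$ since $\lambda=0$ here. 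For $M=\Lambda/(f(T)^b)$ with $f$ distinguished irreducible: for $n\gg0$, $f(T)^b$ is coprime to $\omega_n/\omega_{n-1}$, so $M_{\Gamma_n}=\Lambda/(f^b,\omega_n)$ is finite, $\dim_{\Qp}(M_{\Gamma_{n-1}}\otimes\Qp)=0$, and the transition map $\pi_n$ has $\ker$ and $\coker$ whose length difference, by the multiplicativity of $\Zp$-length along $\Lambda/(f^b,\omega_n)\twoheadrightarrow\Lambda/(f^b,\omega_{n-1})$, equals $\length_{\Zp}\Lambda/(f^b,\omega_n)-\length_{\Zp}\Lambda/(f^b,\omega_{n-1})=\length_{\Zp}\Lambda/(f^b,\omega_n/\omega_{n-1})$, which stabilizes to $b\deg f=\lambda(M)$ for $n\gg0$ (the $\phi(p^n)\mu$ term being $0$). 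Summing over the elementary factors recovers $\nabla M_n=\phi(p^n)\mu(M)+\lambda(M)$.

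The main obstacle I expect is the careful bookkeeping of finiteness: one must know that for each elementary factor, and hence for $M$, the maps $\pi_n:M_{\Gamma_n}\to M_{\Gamma_{n-1}}$ have finite kernel and cokernel for all $n\gg 0$ so that $\nabla M_n$ is even defined, and that the pseudo-isomorphism $M\to E_M$ does not corrupt the limiting value. Both come down to the standard fact that $\Char(M)$ is coprime to $\omega_n/\omega_{n-1}$ for $n\gg 0$ (since $\omega_n/\omega_{n-1}$ generates a prime-to-$\Char(M)$ ideal for large $n$, the roots of $\omega_n/\omega_{n-1}$ being $p^n$-th roots of unity minus $1$ with $p$-adic valuation $1/\phi(p^n)\to 0$), together with additivity of $\nabla$ in exact sequences of such modules; once these are in place the computation is the elementary-module calculation sketched above. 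This lemma is due to Kobayashi \cite[\S10]{kobayashi03}, so in the paper we would simply cite it, but the argument above is the one underlying it.
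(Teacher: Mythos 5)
The paper does not reprove this statement: it is quoted verbatim from Kobayashi and the proof is a one-line citation of \cite[Lemma~10.5]{kobayashi03}. Your proposal reconstructs the argument behind that citation (structure theorem, additivity of $\nabla$ in exact sequences, explicit computation on elementary factors), which is indeed the standard route and essentially Kobayashi's own, so in substance you are on the right track.

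There is, however, one recurring inaccuracy in your sketch that would fail as literally stated. You assert that for $n\gg0$ all terms in the (co)invariants hexagon are finite, and that for an elementary factor $M=\Lambda/(f^b)$ the coinvariants $M_{\Gamma_n}=\Lambda/(f^b,\omega_n)$ are finite because $f^b$ is coprime to $\omega_n/\omega_{n-1}$. Coprimality to $\Phi_n$ for large $n$ does not give this: if $f$ divides some fixed $\omega_{n_0}$ (i.e.\ $f=T$ or $f=\Phi_k$, which certainly occurs for the modules in this paper), then $(f^b,\omega_n)\subseteq(f)$ for all $n\ge n_0$, so $M_{\Gamma_n}$ surjects onto the infinite module $\Lambda/(f)$ and has positive $\Zp$-rank; likewise $M^{\Gamma_n}$ need not be finite. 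In that case your claim that $\dim_{\Qp}(M_{\Gamma_{n-1}}\otimes\Qp)=0$ is false, and the length bookkeeping you describe does not literally apply. This is exactly the situation the term $\dim_{\Qp}M_{n-1}\otimes_{\Zp}\Qp$ in Definition~\ref{def:kob} is designed to handle: for such $f$ one instead checks that $\pi_n$ is surjective with finite kernel, that the $\Zp$-rank of $M_{\Gamma_n}$ stabilizes, and that $\length_{\Zp}\ker\pi_n+\rank_{\Zp}M_{\Gamma_{n-1}}$ still equals $b\deg f=\lambda(M)$ (e.g.\ for $M=\Lambda/\Phi_k$ one gets $\pi_n=\mathrm{id}$ and $\nabla M_{\Gamma_n}=0+\phi(p^k)=\lambda$). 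Similarly, the additivity of $\nabla$ should be derived from the snake lemma applied to the transition maps together with additivity of $\Qp$-ranks, requiring only finiteness of the kernels and cokernels of the $\pi_n$, not of the modules themselves. With these repairs the argument is complete; the conclusion and the overall strategy are correct.
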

\begin{proof}
This is \cite[Lemma~10.5]{kobayashi03}.
\end{proof}

\begin{defn}
For a given cofinitely generated $\Zp$-module $M$, we shall write $M_{\div}$ for its maximal $p$-divisible subgroup. We then denote by $M/\div$ for $M/M_{\div}$, the quotient of $M$ by its maximal $p$-divisible subgroup.
\end{defn}

\begin{remark}\label{rk:kob}
Suppose that $\{M_n\}_{n\ge1}$ is a direct limit of cofinitely generated $\Zp$-modules such that the $\Zp$-coranks of $M_n$ stabilises  and that the connecting map $M_{n-1}\rightarrow M_n$ is injective for $n\gg0$. Then for sufficiently large $n$, the Kobayashi rank $\nabla M_n^\vee$ is defined. Furthermore,
\[
\nabla M_n^\vee=\length_{\Zp}(M_{n}/\div)-\length_{\Zp}(M_{n-1}/\div)+\rank_{\Zp}M_n^\vee.
\]
\end{remark}

We will fix a topological generator $\tau$ of $\Gamma=\Gal(K_\infty/K)$ and set $T=\tau-1$. Then the completed group ring $\Z_p\lb\Gamma\rb$ is isomorphic to $\Lambda=\Z_p\lb T\rb$. For an integer $m\ge0$, we shall write $G_m$ for the Galois group $\Gal(K_m/K)$. Furthermore, let $\Phi_m\in\Lambda$ be the $p^m$-th cyclotomic polynomial in $1+T$ and $\omega_m=(1+T)^{p^m}-1\in\Lambda$ (so $\Phi_m=\omega_m/\omega_{m-1}$ when $m\ge1$). Furthermore, we define
\[\omega_m^+(T)=\prod_{0\le n\le m, n \textup{ even}}\Phi_n(T+1),\quad \omega_m^-(T)=T\prod_{1\le n\le m, n \textup{ odd}}\Phi_n(T+1).\]

\section{Selmer groups}
\label{S:Sel}
\subsection{Plus and Minus points on the formal group}\label{Sec:pm}
 In this section, we write  $k_{m}$ for the completion of $K_m$ at either $v$ or $\overline{v}$. We denote by $\widehat{E}$ the formal group of $E$ with respect to the prime $v$ (and $\overline{v}$). By our running assumptions, $p$ splits completely in $K/\Q$ and both primes of $K$ above $p$ are totally ramified in $K_\infty/K$. Therefore, we may invoke the results of Iovita-Pollack \cite[section 4]{iovitapollack06} to see that there exist elements $d_m\in \widehat{E}(k_m)$ such that
\begin{itemize}
    \item[(1)] $\textup{Tr}_{m,m-1}(d_m)=d_{m-2}$;
    \item[(2)] $\textup{Tr}_{1,0}(d_1)=u d_0$ for a unit $u\in \Z_p^{\times}$;
    \item[(3)] $d_0$ generates $\widehat{E}(\Q_p)=\widehat{E}(k_0)$;
    \item[(4)] For every $m\ge 1$ we have an exact sequence
    \[0\to \widehat{E}(\Q_p)\to \Lambda_m d_m\oplus \Lambda_{m-1}d_{m-1}\to \widehat{E}(k_m)\to 0,\]
    where $\Lambda_n=\Lambda/\omega_n$.
\end{itemize}
We set
\[d_m^+=\begin{cases}
d_m & m \textup{ even},\\
d_{m-1} &m \textup{ odd};
\end{cases}\qquad d_m^-=\begin{cases}
d_{m-1} & m \textup{ even},\\
d_m & m \textup{ odd}.
\end{cases}.\]
We define the plus and minus subrgoup on $\widehat{E}(k_m)$ by 
\[
    \widehat{E}(k_m)^+=\left\{ x\in \widehat{E}(k_m)\mid \Tr_{m,n}(x)\in \widehat{E}(k_{n-1})\textup{ for odd $n$ with $1\le n\le m$} \right \}
\]
and 
\[
    \widehat{E}(k_m)^-=\left\{ x\in \widehat{E}(k_m)\mid \Tr_{m,n}(x)\in \widehat{E}(k_{n-1})\textup{ for even $n$ with $1\le n  \le m$} \right \}
\]

Note that $\widehat{E}(k_m)\otimes \Q_p/\Z_p\cong E(k_m)\otimes \Q_p/\Z_p$ and that the natural homomorphism $\widehat{E}^\pm(k_m)\otimes \Q_p/\Z_p\to E(k_m)\otimes \Q_p/\Z_p$ is injective  by \cite[Lemma 3.3 and remark 3.5]{longovigni}. In turn, this allows us to identify $\widehat{E}^\pm(k_m)\otimes \Q_p/\Z_p$ as a subgroup of $H^1(k_m,\Ep)$ via the Kummer map.
\begin{defn} 
For $\p\in\{v,\overline{v}\}$, we define $H^1_\pm(K_{m,\p},\Ep)$ to be the image of $\widehat{E}^\pm(k_{n})\otimes \Q_p/\Z_p$ inside $H^1(k_m,\Ep)$ under the Kummer map.

We define $H^1_\pm(K_{m,\p},\Tp)$ to be the orthogonal complement of the subgroup $H^1_\pm(K_{m,\p},\Ep)$ of  $H^1(K_{m,\p},\Ep)$\textbf{} under the local Tate pairing
\[
H^1(K_{m,\p},\Tp)\times H^1(K_{m,\p},\Ep)\rightarrow\Qp/\Zp.
\]
\end{defn}
\subsection{Defining Selmer groups}

In this section, we define the various Selmer groups that will be employed in later sections. We first define the local conditions that will be utilized.
\begin{defn}
If $w$ is a place of $K_m$, where $0\le m\le \infty$, and $W\in\{\Ep,\Tp\}$, we define
$H^1_{\Gr}(K_{m,w},W)\subset H^1(K_{m,w},W)$ to be the image of the Kummer map.

For $\p\in\{v,\overline{v}\}$, $\cL\in\{+,-,0,\emptyset\}$, we define 
\[
   H^1_{\cL}(K_{m,\p},W)=\begin{cases}
   H^1_{\pm}(K_{m,\p},W)&\text{if }\cL=\pm,\\
   0&\text{if }\cL=0,\\
   H^1(K_{m,\p},W)&\text{if }\cL=\emptyset.
   \end{cases}\]
\end{defn}

 We note that for $\cL=\emptyset$, the local condition is conventionally taken to be $H^1(K_{m,\p},W)_{\mathrm{div}}$. In our current setting, $E$ is assumed to have good supersingular reduction at  $p \geq 5$, so $E(K_{m,\p})[p^{\infty}]=0$ (by \cite[Proposition~8.7]{kobayashi03}), which in turn implies the equality $H^1(K_{m,\p},W)_{\mathrm{div}} =H^1(K_{m,\p},W)$.

These local conditions allow us to define the following Selmer groups:

\begin{defn}\label{def:Sel}
Let $W\in\{\Tp,E[p^\infty]\}$ and $0\le m\le\infty$.

   For $\cL_v,\cL_{\overline{v}}\in\{+,-,0,\emptyset,\Gr\}$, we define $\Sel^{\cL_v,\cL_{\overline{v}}}(K_m,W)$ to be the kernel of 
\[
H^1(G_\Sigma(K_m),W)\rightarrow \prod_{w\in \Sigma(K_m),w\nmid p}\frac{H^1(K_{m,w},W)}{H^1_{\Gr}(K_{m,w},W)}\times\prod_{\p\in\{v,\overline{v}\}}\frac{H^1(K_{m,\p},W)}{H^1_{\cL_\p}(K_{m,\p},W)},
\]
where $\Sigma$ denotes the set of places in $\QQ$ consisting of the bad primes of $E$, the prime $p$ and the archimedean prime,  $G_\Sigma(K_m)$ denotes the Galois group of the maximal extension of $K_m$ that is unramified outside $\Sigma$, and $\Sigma(K_m)$ is the set of primes of $K_m$ lying above $\Sigma$.

Let $\Tac=T_p(E)\otimes_{\Zp}\Lambda^\iota$, where $\Lambda^\iota$ is the $\Lambda$-module which equals $\Lambda$ as a set with the action of $\Gamma$ twisted by the inversion $\sigma\mapsto\sigma^{-1}$.
We shall write $$\Sel^{\cL_v,\cL_{\overline{v}}}(K,\Tac)=\varprojlim_n \Sel^{\cL_v,\cL_{\overline{v}}}(K_n,T_p(E))$$

and
$$X^{\cL_v,\cL_{\overline{v}}}(E/K_\infty)=\Sel^{\cL_v,\cL_{\overline{v}}}(K_\infty,E[p^\infty])^\vee.$$
\end{defn}

To simplify notation, we shall adopt the following shortcuts:
\begin{itemize}
    \item When $\cL_v=\cL_{\overline{v}}\in\{+,-\}$, we shall simply write "$\pm$" in place of "$\cL_v,\cL_{\overline v}$".
    \item We shall write "$\BDP$" in place of  "$\emptyset,0$" (since this Selmer group is related to the $p$-adic $L$-function studied by  Bertolini--Darmon--Prasanna in \cite{bertolinidarmonprasanna13}).
    \item We shall write $\Sel^{\cL_v,\cL_{\overline{v}}}(E/K_n)$ in place of $\Sel^{\cL_v,\cL_{\overline{v}}}(K_n,\Ep)$.
    \item When $\cL_v=\cL_{\overline{v}}=\Gr$, we shall remove the superscripts from the notation altogether.
\end{itemize} 
\begin{remark}
It is well known that the classical $p$-primary Selmer group $\Sel(E/K_m)$ does not depend on the definition of $\Sigma$ in the following sense (see \cite[Corollary 6.6]{Milne}): If we choose any set $\Sigma'$ containing $\Sigma$, then the Selmer group defined with respect to $\Sigma$ and the one defined with respect to $\Sigma'$ are isomorphic. A similar argument shows that the same holds for the other Selmer groups introduced in Definition~\ref{def:Sel}.
\end{remark}

\section{Properties of BDP Selmer groups}\label{S:BDP}

We first prove a control theorem for the BDP Selmer groups over $K_m$ as $m$ varies. This allows us to study the size of $\Sel^\BDP(E/K_m)$ as $m$ varies.
\begin{theorem}\label{thm:control}
The natural homomorphism
\[r_m\colon \Sel^{\BDP}(E/K_m)\to \Sel^\BDP(E/K_\infty)^{\Gamma_m}\]
is injective and has uniformly bounded cokernel. 
\end{theorem}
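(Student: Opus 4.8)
The plan is to run the standard Mazur-style control-theorem argument, comparing the defining sequences of $\Sel^\BDP(E/K_m)$ and $\Sel^\BDP(E/K_\infty)^{\Gamma_m}$ via the inflation-restriction diagram. Concretely, I would write down the commutative diagram with exact rows
\[
\begin{CD}
0 @>>> \Sel^\BDP(E/K_m) @>>> H^1(G_\Sigma(K_m),\Ep) @>>> \mathcal{P}_m \\
@. @VV{r_m}V @VV{h_m}V @VV{g_m}V \\
0 @>>> \Sel^\BDP(E/K_\infty)^{\Gamma_m} @>>> H^1(G_\Sigma(K_\infty),\Ep)^{\Gamma_m} @>>> \mathcal{P}_\infty^{\Gamma_m}
\end{CD}
\]
where $\mathcal{P}_m$ (resp. $\mathcal{P}_\infty$) denotes the product of the local quotients appearing in Definition~\ref{def:Sel} for the pair $(\emptyset,0)$, i.e. over $w\nmid p$ one takes $H^1(K_{m,w},\Ep)/H^1_{\Gr}$, at $v$ one takes the zero quotient (so no contribution), and at $\overline v$ one takes the whole $H^1(K_{m,\overline v},\Ep)$. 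By the snake lemma, $\ker r_m$ is controlled by $\ker h_m$ and $\coker r_m$ injects into $\coker h_m$ together with $\ker g_m$, so the theorem reduces to two things: (i) $\ker h_m$ and $\coker h_m$ are controlled, and (ii) $\ker g_m$ is finite of uniformly bounded order.

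For (i), the kernel and cokernel of $h_m$ are $H^1(\Gamma_m,\Ep^{G_\Sigma(K_\infty)})$ and a subquotient of $H^2(\Gamma_m,\Ep^{G_\Sigma(K_\infty)})$ by inflation-restriction; since $\Gamma_m\cong\Zp$ has cohomological dimension $1$ the $H^2$ term vanishes, so $h_m$ is already surjective, and $\ker h_m = H^1(\Gamma_m, \Ep^{G_\Sigma(K_\infty)})$ is finite and bounded because $\Ep^{G_\Sigma(K_\infty)} = E(K_\infty)[p^\infty]$ is finite (the surjectivity of $\rho$ forces $E[p]$ to have no $G_{K_\infty}$-fixed points, hence $E(K_\infty)[p^\infty]=0$ and $\ker h_m = 0$ outright). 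So the whole content is (ii). Here I would analyze $\ker g_m = \prod_w \ker\bigl(H^1_{/\mathrm{loc}}(K_{m,w},\Ep)\to H^1_{/\mathrm{loc}}(K_{\infty,w},\Ep)^{\Gamma_{m,w}}\bigr)$ place by place. For the archimedean place there is nothing since $p$ is odd. For $w\nmid p$ (including the inert primes dividing $D$), the relevant kernel is the usual $E(K_{\infty,w})[p^\infty]^{\Gamma}$-type term, which is finite and bounded independently of $m$ by the standard argument (only primes ramifying in $K_\infty/K$ contribute, these are just $v,\overline v$, so in fact these terms vanish for $w\nmid p$), while at $v$ the local condition is trivial on both levels so the factor is zero. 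The only genuinely nontrivial factor is at $\overline v$, where I must bound the kernel of $H^1(K_{m,\overline v},\Ep)\to H^1(K_{\infty,\overline v},\Ep)^{\Gamma_{m}}$; by inflation-restriction this kernel is $H^1(\Gamma_{m,\overline v}, \Ep^{G_{K_{\infty,\overline v}}}) = H^1(\Gamma_{m,\overline v}, E(K_{\infty,\overline v})[p^\infty])$, and since $E$ has good supersingular reduction at $p$ we have $E(K_{\infty,\overline v})[p^\infty]=0$ by \cite[Proposition~8.7]{kobayashi03} — so this kernel vanishes as well, giving $\ker g_m=0$ and hence $r_m$ an isomorphism, though I should double-check whether the "uniformly bounded cokernel" in the statement is there to accommodate a subtlety I am missing (e.g. if one does not quotient $H^1(K_{\infty,\overline v},E[p^\infty])$ by its divisible part in the definition of $\Sel^\BDP$ over $K_\infty$, a finite discrepancy could appear).

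The main obstacle I anticipate is the local analysis at $\overline v$ — more precisely, making sure the Selmer condition at $\overline v$ is compatible under corestriction/restriction in the tower in the way the diagram demands, and controlling the term $\coker g_m$ if it is nonzero (this is where a genuine but uniformly bounded cokernel for $r_m$ could enter, coming from $H^1(\Gamma_{m,\overline v}, -)$ of the relevant local module or from the image of $h_m$ not surjecting onto the relevant part of $\mathcal P_\infty^{\Gamma_m}$). I would handle this by invoking that $K_{\infty,\overline v}/K_{\overline v}$ is a ramified $\Zp$-extension of a $p$-adic field and using local Euler characteristic / local duality to see that all the relevant $\Gamma_m$-cohomology groups of the local Galois cohomology of $\Ep$ at $\overline v$ are finite with order bounded independently of $m$; the supersingularity input $E(K_{\infty,\overline v})[p^\infty]=0$ is what makes these bounds uniform. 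If everything vanishes as I expect, the proof collapses to: apply the snake lemma to the diagram, observe $\ker h_m = \ker g_m = 0$ and $h_m$ surjective, conclude $r_m$ injective with cokernel a subquotient of $\ker g_m = 0$ — hence $r_m$ is even an isomorphism, and the "uniformly bounded cokernel" phrasing is the safe weaker statement that suffices for the applications in Corollary~\ref{cor:BDP-growth}.
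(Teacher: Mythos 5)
Your overall strategy is exactly the paper's: the same defining diagram for $\Sel^{\BDP}$ over $K_m$ and $K_\infty$, the snake lemma, the vanishing of $E(K_\infty)[p^\infty]$ (via \cite[Proposition~8.7]{kobayashi03}) to kill $\ker h_m$ and $\coker h_m$ and get injectivity of $r_m$, and then a place-by-place bound on $\ker g_m$, with the prime $\overline v$ handled by total ramification plus the vanishing of local $p$-torsion. Up to that point the proposal is correct.

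The one genuine error is your treatment of the places $w\in\Sigma(K_m)$ with $w\nmid p$. You assert that ``only primes ramifying in $K_\infty/K$ contribute, these are just $v,\overline v$, so in fact these terms vanish for $w\nmid p$,'' and from this you conclude $\ker g_m=0$ and that $r_m$ is an isomorphism. That is false in general. In the anticyclotomic $\Zp$-extension a prime $w\nmid p$ is either totally split (trivial contribution, as you and the paper both note) or finitely decomposed with open decomposition group $G_w\cong\Zp$ inside $\Gal(K_\infty/K_m)$; in the latter case the kernel of $g_{m,w}$ is $H^1(G_w,E(K_{\infty,w})[p^\infty])$, which is finite but typically nonzero for primes of bad reduction (it is related to Tamagawa-type data), even though $w$ is unramified. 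This is precisely why the theorem asserts only a uniformly bounded cokernel rather than an isomorphism: the paper bounds these contributions uniformly in $m$ via \cite[Lemma~3.4]{Lim-control} and observes that only finitely many such $w$ occur. Your hedged fallback (``finite and bounded independently of $m$ by the standard argument'') is the correct statement and suffices to complete the proof, but the reasoning you give for it is wrong, and the strengthened conclusion that $r_m$ is an isomorphism is not justified.
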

\begin{proof}
Consider the commutative diagram
\[\begin{tikzcd}[scale cd =0.75]
0\arrow[r]&\Sel^\BDP(E/K_m)\arrow[r]\arrow[d,"r_m"]&H^1(G_{\Sigma}(K_m),E[p^\infty])\arrow[r]\arrow[d,"h_m"]& \displaystyle\bigoplus_{w\in \Sigma(K_m)\cup\{\overline{v}\}}H^1({K_{m,w}},E[p^\infty])\arrow[d,"g_m"] \\0 \arrow[r]&\Sel^\BDP(E/K_\infty)^{\Gamma_m}\arrow[r]&H^1(G_{\Sigma}(K_\infty),E[p^{\infty}])^{\Gamma_m}\arrow[r]&  \left(\displaystyle\bigoplus_{w\in \Sigma(K_\infty)\cup\{\overline{v}\}}H^1({K_{\infty,w}},E[p^{\infty}])\right)^{\Gamma_m}\end{tikzcd}
\]

Recall that $E(K_\infty)[p^\infty]=\{0\}$ by \cite[Proposition 8.7]{kobayashi03}. Therefore, $H^i(K_\infty/K,E(K_\infty)[p^\infty])=\{0\}$ for $i=1, 2$. By the inflation restriction exact sequence, $H^i(K_\infty/K,E(K_\infty)[p^\infty])$ are the kernel and cokernel of $h_m$, respectively. Applying the snake lemma to the above diagram we see that $r_m$ is injective.

To show that the cokernel of $r_m$ is uniformly bounded, it suffices to show that the kernel of $g_m$ is uniformly bounded. Note that $g_m=\displaystyle\bigoplus_{w\in \Sigma(K_m)\cup\{\overline{v}\}}g_{m,w}$. We will bound the kernel for each prime separately.
\begin{itemize}
    \item[a)] $w=\overline{v}$. We assumed that $K_\infty/K$ is totally ramified at both primes above $p$. By the inflation restriction exact sequence, the kernel of $g_{m,\overline{v}}$ is given by $H^1(k_\infty/k_m,E(k_\infty)[p^\infty])$. As $E(k_\infty)[p^\infty]$ is trivial by \cite[Proposition 8.7]{kobayashi03}, we deduce that $g_{m,w}$ is injective.
    \item[b)] $w$ is totally split in $K_\infty/K$. In this case, $g_{m,w}$ is a diagonal embedding with trivial kernel.
    \item[c)] $w$ has a non-trivial decomposition group $G_w$ in $\Gal(K_\infty/K_m)$.  In this case, the kernel of $g_{m,w}$ is given by $H^1(G_w,E(K_{\infty,v})[p^\infty])$. The size of this group is uniformly bounded independent of $m$ by \cite[Lemma 3.4]{Lim-control}.
\end{itemize}
Note that in case c), $w$ lies above a prime $w'\in \Sigma(K)$ that is not totally split in $K_\infty/K$. As $\Sigma(K)$ is finite, the set of such $w$ is uniformly bounded independent of $m$.
\end{proof}

We prove the torsionness of the $\Lambda$-module $X^\BDP(E/K_\infty)$ with the aid of Heegner points.
\begin{defn}
For each $m\ge0$, we write $z_m\in E(K_m)$ for the Heegner point defined as in \cite[\S2.5]{BD96}.
\end{defn}

We recall from \cite[(11)]{longovigni} that these Heegner points satisfy  the following norm relations
\begin{align}
\label{norm1}    \textup{Tr}_{m,m-1}(z_m)&=-z_{m-2}\text{ for }m\ge 2,\\
 \label{norm2}   \textup{Tr}_{1,0}(z_1)&=\frac{p-1}{2}z_0,
\end{align}
     where $\Tr_{m,m-1}$ denotes the trace map from $E(K_m)$ to $E(K_{m-1})$.

\begin{defn}\label{def:pmHeeg}
For an integer $m\ge0$,  we define the plus and minus Heegner points by
\[z_m^+=\begin{cases}
z_m & m \textup{ even},\\
z_{m-1} &m \textup{ odd};
\end{cases}\qquad z_m^-=\begin{cases}
z_{m-1} & m \textup{ even},\\
z_m & m \textup{ odd}.
\end{cases}\]

We define $\cE_m^\pm$ to be the $\Lambda_m^\pm$-module generated by elements of the form $z_m^\pm\otimes p^{-k}$ (with $k\in \Z_{\ge1}$) inside $\Sel^\pm(E/K_m)$.\footnote{The norm relations \eqref{norm1} and \eqref{norm2} imply that the localization of $z_m^\pm$ belongs to $ \widehat{E}(k_m)^\pm$. Thus, $z_m^\pm\otimes p^{-k}$ lands inside $\Sel^\pm(E/K_m)$; see also \cite[Proposition~4.2]{longovigni}.}

Finally, we define $\cE_\infty^\pm$ to be the $\Lambda$-module given by $\displaystyle\varinjlim_m \cE_m^\pm$ (see \cite[Proposition~4.3]{longovigni}, which shows that the modules $\cE_m^\pm$ form a direct system).
\end{defn}

\begin{theorem}\label{thm:torsion-BDP}
The Pontryagin dual of the BDP Selmer group $X^{\BDP}(E/K_\infty)$ is a finitely generated torsion $\Lambda$-module.
\end{theorem}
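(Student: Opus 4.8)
The plan is to deduce the torsionness of $X^{\BDP}(E/K_\infty)$ from the non-triviality of the Heegner point module $\cE_\infty^\pm$ together with a suitable Poitou--Tate / global Euler characteristic argument, following the strategy pioneered by \c{C}iperiani in \cite{ciperiani} for the case $D=1$. First I would relate the BDP Selmer group $\Sel^\BDP(E/K_\infty) = \Sel^{\emptyset,0}(K_\infty,E[p^\infty])$ to one of the signed Selmer groups $\Sel^\pm(E/K_\infty)$. The point is that $\Sel^\BDP$ imposes the trivial condition at $\overline{v}$ and the full local condition at $v$, whereas $\Sel^\pm$ imposes the $\pm$-condition at both primes; since $H^1_\pm(K_{\infty,\p},\Ep)$ is a proper submodule of $H^1(K_{\infty,\p},\Ep)$ of the same corank (indeed both have $\Lambda$-corank one), the two differ, over $K_\infty$, by $\Lambda$-cotorsion quotients at the two primes above $p$. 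So it suffices to prove that $X^\pm(E/K_\infty)=\Sel^\pm(E/K_\infty)^\vee$ is $\Lambda$-torsion for one choice of sign (and then the BDP version follows, possibly after checking a corank computation).

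The key input for the torsionness of the signed Selmer group is the work of Longo--Vigni \cite{longovigni}: under our running hypotheses (generalized Heegner hypothesis, $\rho$ surjective), the plus/minus Heegner point modules $\cE_\infty^\pm$ are nonzero, and in fact $\Sel^\pm(E/K_\infty)$ has $\Lambda$-corank exactly one with the Heegner classes accounting for the free part. Concretely, I would argue as follows. By the global Euler characteristic formula (Greenberg), or by the Poitou--Tate exact sequence comparing $\Sel^\pm$ with the compact Selmer group $\Sel^\pm(K,\Tac)$ and the Iwasawa cohomology $\HIw$, one gets
\[
\corank_\Lambda \Sel^\pm(E/K_\infty) - \rank_\Lambda \Sel^\pm(K,\Tac) = 0,
\]
so it is enough to show $\rank_\Lambda \Sel^\pm(K,\Tac) \le 1$ and $\corank_\Lambda \Sel^\pm(E/K_\infty) \le 1$, and then that the Heegner point $\varprojlim z_m^\pm$ generates a rank-one submodule of $\Sel^\pm(K,\Tac)$, forcing both ranks to equal $1$; dualizing and re-comparing with BDP then yields that $X^\BDP(E/K_\infty)$ is torsion once we subtract off the rank-one contribution at the "missing" prime. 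Alternatively, and perhaps more cleanly, I would bound $\corank_\Lambda X^\BDP(E/K_\infty)$ directly: the BDP Selmer group sits in an exact sequence with $\Sel^\pm(E/K_\infty)$ whose error terms at $v,\overline v$ have computable $\Lambda$-coranks (via the explicit $d_m^\pm$ and the exact sequence (4) in \S\ref{Sec:pm}), and the nonvanishing of $\cE_\infty^\pm$ inside $\Sel^\pm$ pins the corank of $X^\BDP(E/K_\infty)$ down to $0$.

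The main obstacle I anticipate is the bookkeeping of local coranks at the two primes above $p$ and making sure the comparison between $\Sel^\BDP$ and $\Sel^\pm$ is exact (not merely up to bounded error), since the whole argument hinges on a precise corank count: a rank-one global object becomes torsion after we drop precisely one local condition. This requires knowing that $H^1_\pm(K_{\infty,\p},\Ep)$ and the full local cohomology $H^1(K_{\infty,\p},\Ep)$ each have $\Lambda$-corank one with the quotient $\Lambda$-cotorsion, which follows from the structure of $\widehat E(k_m)^\pm$ recorded in \S\ref{Sec:pm}, but needs to be assembled carefully over $K_\infty$. A secondary technical point is ensuring the surjectivity of the relevant global-to-local map (or identifying its cokernel with a controlled dual Selmer group) so that the Euler characteristic computation is unconditional under our hypotheses; here the surjectivity of $\rho$ and the generalized Heegner hypothesis, via \cite{longovigni}, should suffice. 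Once these coranks are in hand, torsionness is immediate.
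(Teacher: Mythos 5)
Your overall strategy (a Poitou--Tate comparison between the BDP and signed Selmer groups, with Heegner classes supplying the rank-one part) is the right one and is close in spirit to the paper's proof, which follows Castella--Wan \cite{castellawan1607}. But two points are genuine gaps, and the first is a false statement rather than a missing detail. You assert that $H^1_\pm(K_{\infty,\p},\Ep)$ and $H^1(K_{\infty,\p},\Ep)$ both have $\Lambda$-corank one, so that $\Sel^\BDP(E/K_\infty)$ and $\Sel^\pm(E/K_\infty)$ differ by cotorsion and hence ``it suffices to prove that $X^\pm(E/K_\infty)$ is $\Lambda$-torsion.'' Neither claim is correct: since $K_v\cong\Qp$ and $T_pE$ has rank two, the local Euler characteristic gives $\corank_\Lambda H^1(K_{\infty,v},\Ep)=2$ while $\corank_\Lambda H^1_\pm(K_{\infty,v},\Ep)=1$, so relaxing the condition at $v$ can raise the global corank by one and imposing the strict condition at $\overline{v}$ can lower it by one; and in the indefinite setting $X^\pm(E/K_\infty)$ has $\Lambda$-rank exactly one (\cite[Theorem~5.1]{longovigni}), not zero --- this is the defining feature of the indefinite anticyclotomic case, and your proposal is internally inconsistent on it, since you later concede that both ranks equal one.

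Second, the decisive analytic input is missing. The Poitou--Tate sequence $\Sel^\pm(K,\Tac)\to H^1_\pm(K_v,\Tac)\to X^{\emptyset,\pm}(E/K_\infty)\to X^\pm(E/K_\infty)\to 0$ only yields $\rank_\Lambda X^{\emptyset,\pm}(E/K_\infty)=\rank_\Lambda X^\pm(E/K_\infty)=1$ if the first map has non-torsion image in the rank-one free module $H^1_\pm(K_v,\Tac)$, i.e.\ if the localization at $v$ of the Heegner class $z_\infty^\pm$ is non-zero there. Global non-triviality of $\cE_\infty^\pm$, which is all you invoke, does not give this: a non-torsion global class could a priori localize to a torsion class at $v$. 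The paper obtains the local non-vanishing from Castella--Wan's explicit reciprocity law (the map $\mathrm{Log}_v^\pm$ sends $\loc_v z_\infty^\pm$ to a non-zero multiple of the BDP $p$-adic $L$-function) combined with the non-vanishing of that $L$-function (\cite{BCK}, Hsieh, Burungale). Finally, you still need a duality step, together with the complex-conjugation symmetry swapping $v$ and $\overline{v}$, to pass from the ``relaxed at $v$, signed at $\overline{v}$'' Selmer group to the strict-at-$\overline{v}$ (i.e.\ BDP) Selmer group; this is \cite[Lemma~6.6(2)]{castellawan1607} in the paper and is absent from your sketch.
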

\begin{proof}
This has essentially been  proved by Castella--Wan in  \cite{castellawan1607} (see in particular the equivalence given in Theorem~6.7). For completeness, we outline the ideas behind the proof. By global duality, we have the following exact sequence:
\begin{equation}\label{eq:PT1}
   \Sel^\pm(K,\Tac)\rightarrow H^1_\pm(K_v,\Tac) \rightarrow X^{\emptyset,\pm}(E/K_\infty)\rightarrow X^\pm(E/K_\infty)\rightarrow0,
\end{equation}
where $H^1_\pm(K_v,\Tac)$ is the inverse limit $\varprojlim_m H^1_\pm(K_{n,v},T)$.

By \cite[Proposition~4.16]{iovitapollack06}, $H^1_\pm(K_v,\Tac)$ is free of rank-one over $\Lambda$. Furthermore, \cite[Lemma~4.7]{castellawan1607}  tells us that $z_\infty^\pm :=(z_m^\pm)_{m\ge0}$ gives an element in the inverse limit $\Sel^\pm(K,\Tac)$. 
Furthermore, \cite[Theorem~6.2]{castellawan1607} says that the map $\mathrm{Log}_v^\pm $ (given by Definition 3.13 of op. cit) sends the localization of $z_\infty^\pm $  to a non-zero multiple of  the BDP $p$-adic $L$-function, which is non-zero (see \cite[Corollary~4.5 and Remark~4.6]{BCK}; the hypothesis of loc. cit. that the residual representation is absolutely irreducible is satisfied since we have assumed that $E$ is supersingular at $p$).  Recall from \cite[Theorem~5.1]{longovigni} that $X^\pm(E/K_\infty)$ is of rank one over $\Lambda$. Therefore, \eqref{eq:PT1} tells us that $X^{\emptyset,\pm}(E/K_\infty)$ is also of rank one over $\Lambda$.

The complex conjugation sends $X^{\emptyset,\pm}(E/K_\infty)$ to $X^{\pm,\emptyset}(E/K_\infty)$ (and vice versa). Therefore, the latter is equally of rank one over $\Lambda$. Consequently, \cite[Lemma~6.6(2)]{castellawan1607} tells us that $X^{\pm,0}(E/K_\infty)$ is a finitely generated torsion $\Lambda$-module. Since $X^{\BDP}(E/K_\infty)$  is a quotient of $X^{\pm,0}(E/K_\infty)$, the result follows.
\end{proof}

\begin{corollary}\label{cor:trivial}
The $\Lambda$-module $X^\BDP(E/K_\infty)$ does not contain any non-trivial finite submodules.
\end{corollary}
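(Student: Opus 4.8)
The plan is to show that $X^\BDP(E/K_\infty)$ has no nonzero finite $\Lambda$-submodule by the standard argument that a torsion Iwasawa module with such a submodule would force growth of $\Zp$-coranks in the Selmer group $\Sel^\BDP(E/K_m)$ at the bottom layers that contradicts the control theorem. Concretely, write $X = X^\BDP(E/K_\infty)$, which is finitely generated torsion over $\Lambda$ by Theorem~\ref{thm:torsion-BDP}. Suppose for contradiction that $X$ has a nonzero finite $\Lambda$-submodule; equivalently, the maximal finite submodule $X_{\mathrm{fin}}$ is nonzero.

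First I would translate this into a statement about the dual Selmer groups. Since $\Sel^\BDP(E/K_\infty) = X^\vee$ is a cofinitely generated cotorsion $\Lambda$-module, the control theorem (Theorem~\ref{thm:control}) tells us that $r_m\colon \Sel^\BDP(E/K_m) \hookrightarrow \Sel^\BDP(E/K_\infty)^{\Gamma_m}$ is injective with uniformly bounded cokernel, so in particular the $\Zp$-coranks satisfy
\[
\corank_{\Zp}\Sel^\BDP(E/K_m) = \corank_{\Zp}\left(X/\omega_m X\right) + O(1).
\]
On the other hand, because $X$ is torsion over $\Lambda$, its characteristic ideal is nonzero, and for $m\gg0$ the cyclotomic-type polynomials $\omega_m$ are coprime to the characteristic ideal; a structure-theory computation then gives $\corank_{\Zp}(X/\omega_m X) = \lambda(X) + \mu(X)(p^m - p^{m-1}) + O(1)$ — in any case a quantity that stabilises modulo the pseudo-null contribution and is insensitive to finite submodules in the limit. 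The key point is the reverse: I would use the well-known fact (e.g. the argument in Greenberg's work, or \cite[Lemma~10.5]{kobayashi03} combined with the structure theory) that $X$ has no nonzero finite submodule if and only if the natural surjections $X/\omega_m X \twoheadrightarrow X_{\Gamma_m}$ behave correctly — more precisely, $X$ has nonzero finite submodule iff $H^0(\Gamma_m, X)$ (equivalently $H^2$ by Herbrand) is nonzero and stable, which makes $\Sel^\BDP(E/K_\infty)^{\Gamma_m}$ too large compared to $\Sel^\BDP(E/K_m)$.

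Rather than reconstruct that machinery, the cleaner route — and the one I would actually carry out — is as follows. The maximal finite submodule of a finitely generated torsion $\Lambda$-module $X$ is $\varprojlim_m X[\omega_m]$ up to finite error, and dually $\Sel^\BDP(E/K_\infty)$ has $X_{\mathrm{fin}} \neq 0$ exactly when $\bigl(\Sel^\BDP(E/K_\infty)^\vee\bigr)$ has a certain defect detected by $H^1(\Gamma_m, -)$. A slicker formulation: since $E(K_\infty)[p^\infty] = 0$ (by \cite[Proposition~8.7]{kobayashi03}, as used throughout \S\ref{S:BDP}), one knows $H^1(G_\Sigma(K_\infty), E[p^\infty])$ has no proper $\Lambda$-submodule of finite index issues at the relevant spot, and the BDP Selmer group sits inside it; the standard argument of Greenberg (see \cite[Proposition~4.14]{Greenberg} or the analogous statement) shows that the dual of a Selmer group cut out by local conditions, when the ambient $H^1$ has trivial $\Gamma$-invariants on $E[p^\infty]$ over $K_\infty$, has no nonzero finite $\Lambda$-submodule provided the relevant $H^2$ vanishes and the Selmer group is cotorsion. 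Here cotorsionness is Theorem~\ref{thm:torsion-BDP}, and the triviality of $E(K_\infty)[p^\infty]$ supplies the vanishing of the local and global $H^0$'s needed.

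The main obstacle I anticipate is bookkeeping the local conditions: the BDP condition is $\cL_v = \emptyset$, $\cL_{\overline v} = 0$, so at $\overline v$ one imposes the trivial local condition while at $v$ one imposes the full one, and at the bad primes $w \nmid p$ one uses the unramified/Greenberg condition. One must check that none of these local quotients introduces a finite submodule into the dual — this is where one invokes that $E(K_{\infty,w})[p^\infty] = 0$ at $w \mid p$ and controls the tame primes via $\Gal(K_\infty/K) \cong \Zp$ having cohomological dimension $1$, so that $H^2(\Gamma_m, -)$ vanishes and the relevant $H^1$'s of local Galois groups are cofinitely generated with bounded corank. Once these local checks are in place, the conclusion is the formal statement that a cotorsion Greenberg–Selmer group with these properties has Pontryagin dual free of nonzero finite $\Lambda$-submodules; I would cite Greenberg's general result and verify its hypotheses in our situation rather than reprove it. I expect the write-up to be short, with the only real content being the verification that $E(K_\infty)[p^\infty] = 0$ (already available) kills all the potentially problematic $H^0$ terms.
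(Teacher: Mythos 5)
Your second route---invoking a Greenberg-style ``no nonzero finite submodules'' criterion for cotorsion Selmer groups---is the right idea and is essentially what the paper does (its entire proof is that the argument of \cite[Lemma~3.4]{LMX} carries over). But your first route is a dead end, and your second route omits the load-bearing hypothesis. On the first point: finite submodules of $X^\BDP(E/K_\infty)$ are invisible to the control theorem. If $F\subseteq X^\BDP(E/K_\infty)$ is finite and nonzero, then $\Gamma_m$ acts trivially on $F$ for $m\gg0$, so $F^{\Gamma_m}=F_{\Gamma_m}=F$ and $F$ contributes only a bounded amount to $X^\BDP(E/K_\infty)_{\Gamma_m}$ versus $\Sel^\BDP(E/K_m)^\vee$---which is exactly what the ``uniformly bounded cokernel'' in Theorem~\ref{thm:control} allows. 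No comparison of $\Zp$-coranks or of $\Sel^\BDP(E/K_\infty)^{\Gamma_m}$ with $\Sel^\BDP(E/K_m)$ can detect $F$ without an independent computation of $\absolute{\Sel^\BDP(E/K_m)/\div}$, which is precisely what is not available (it is what the paper is computing).

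On the second point: the hypothesis in Greenberg's argument that actually does the work is the surjectivity of the global-to-local map defining the Selmer group over $K_\infty$, i.e.\ of
\[
H^1(G_\Sigma(K_\infty),\Ep)\longrightarrow \prod_{w\in\Sigma(K_\infty),\,w\nmid p}\frac{H^1(K_{\infty,w},\Ep)}{H^1_{\Gr}(K_{\infty,w},\Ep)}\times H^1(K_{\infty,\overline{v}},\Ep),
\]
and you never address it. Without surjectivity, $X^\BDP(E/K_\infty)$ is merely a quotient of $H^1(G_\Sigma(K_\infty),\Ep)^\vee$, and a quotient of a module with no finite submodules can certainly acquire them; the vanishing of $E(K_\infty)[p^\infty]$ and of the relevant $H^2$ only controls the ambient module and the $H^0$-terms. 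For the BDP local conditions (relaxed at $v$, strict at $\overline v$) surjectivity is not formal: by Poitou--Tate the cokernel is controlled by the compact Selmer group with the dual conditions, $\Sel^{0,\emptyset}(K,\Tac)$, so one must show this module vanishes---e.g.\ by showing it is $\Lambda$-torsion (via the rank computations underlying Theorem~\ref{thm:torsion-BDP}) while also being a submodule of the torsion-free module $\varprojlim_m H^1(G_\Sigma(K_m),\Tp)$. One then still needs that the dual of each local term has no nonzero finite submodule (at $\overline v$ this is the $\Lambda$-freeness of $\HIw(K_{\overline v},\Tp)$ from \cite[Proposition~4.16]{iovitapollack06}; at $w\nmid p$ a separate finiteness argument). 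These verifications are the actual content of the cited lemma; as written, your proposal asserts the conclusion of that machinery without supplying its key input.
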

\begin{proof}
The proof of \cite[Lemma~3.4]{LMX} carries over to our current setting.
\end{proof}

\begin{remark}\label{rk:mu}
It has been proved by Hsieh \cite[Theorem B]{hsiehnonvanishing} and Burungale \cite[Theorem B]{burungale2} that the $\mu$-invariant of the BDP $p$-adic $L$-function of Bertolini–Darmon-Prasanna defined in \cite{bertolinidarmonprasanna13} (and its extension given in \cite{hunterbrooks,CastellaHsiehGHC,BCK})  is zero. Furthermore, the main conjecture predicts that the square of the BDP $p$-adic $L$-function generates the characteristic ideal of $X^\BDP(E/K_\infty)$ after base changing to the ring of integers of the completion of the maximal unramified extension of $\Qp$ (see  \cite[Theorem~1.5]{kobayashiota}, \cite[Theorem~A]{lei-zhao}, \cite[Theorem~5.3]{castellawan1607}, \cite[Theorem~8.2.1]{CLW} and \cite{CHKLL} for some recent progress towards this main conjecture in the non-ordinary setting). Therefore, it is expected that the $\mu$-invariant of $X^{\BDP}(E/K_\infty)$  should also be zero. 
\end{remark}

\begin{lemma}\label{lem:BDP-Kob}
For $m\gg0$, $\nabla(\Sel^\BDP(E/K_m)^\vee)$ is defined and is equal to 
\[
\mu_0\phi(p^m)+\lambda_0,
\]
where $\mu_0$ and $\lambda_0$ are the $\mu$- and $\lambda$-invariants of  $X^{\BDP}(E/K_\infty)$.
\end{lemma}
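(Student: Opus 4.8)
The plan is to deduce Lemma~\ref{lem:BDP-Kob} from the control theorem (Theorem~\ref{thm:control}), the torsionness of $X^\BDP(E/K_\infty)$ (Theorem~\ref{thm:torsion-BDP}), the absence of non-trivial finite submodules (Corollary~\ref{cor:trivial}), and Kobayashi's rank formula (Lemma~\ref{lem:kob-rank}). The underlying principle is that $\nabla$ of a torsion module only sees the module up to finite pieces, and our control data pins down $\Sel^\BDP(E/K_m)$ relative to $\Sel^\BDP(E/K_\infty)^{\Gamma_m}$ up to a uniformly bounded error, which is enough.

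First I would record the dual reformulation. Since $X^\BDP(E/K_\infty) = \Sel^\BDP(E/K_\infty)^\vee$ is a finitely generated torsion $\Lambda$-module with no non-trivial finite submodules, its $\Gamma_m$-coinvariants $X^\BDP(E/K_\infty)_{\Gamma_m}$ are finite for $m\gg0$ (a standard consequence of torsionness: $\omega_m$ is coprime to the characteristic ideal for all but finitely many $m$, and then one uses the structure theory up to pseudo-isomorphism together with the no-finite-submodule hypothesis to control the error). Dually, $\Sel^\BDP(E/K_\infty)^{\Gamma_m} = \bigl(X^\BDP(E/K_\infty)_{\Gamma_m}\bigr)^\vee$ is finite for $m\gg0$. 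Hence, by Theorem~\ref{thm:control}, $\Sel^\BDP(E/K_m)$ is itself finite for $m\gg0$, sits inside $\Sel^\BDP(E/K_\infty)^{\Gamma_m}$ as a subgroup of uniformly bounded index $C$ (independent of $m$). Applying $(-)^\vee$, the connecting maps $\pi_m\colon \Sel^\BDP(E/K_m)^\vee \to \Sel^\BDP(E/K_{m-1})^\vee$ are (for $m\gg0$) maps of finite groups whose kernel and cokernel are bounded by comparison with the maps $X^\BDP(E/K_\infty)_{\Gamma_m}\to X^\BDP(E/K_\infty)_{\Gamma_{m-1}}$, up to a bounded error coming from $C$.

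Next I would compute. Because all the groups involved are finite for $m\gg0$, the term $\dim_{\Qp} (\Sel^\BDP(E/K_{m-1})^\vee)\otimes\Qp$ in Definition~\ref{def:kob} vanishes, so
\[
\nabla\bigl(\Sel^\BDP(E/K_m)^\vee\bigr) = \length_{\Zp}\ker\pi_m - \length_{\Zp}\coker\pi_m = \length_{\Zp}\Sel^\BDP(E/K_m)^\vee - \length_{\Zp}\Sel^\BDP(E/K_{m-1})^\vee,
\]
the last equality being the additivity of length in the exact sequence $0\to\ker\pi_m\to \Sel^\BDP(E/K_m)^\vee \to \Sel^\BDP(E/K_{m-1})^\vee \to \coker\pi_m\to 0$. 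By Theorem~\ref{thm:control}, $\length_{\Zp}\Sel^\BDP(E/K_m)^\vee = \length_{\Zp} X^\BDP(E/K_\infty)_{\Gamma_m} + O(1)$, where the $O(1)$ is bounded by $\ord_p C$. The same holds with $m$ replaced by $m-1$. Therefore
\[
\nabla\bigl(\Sel^\BDP(E/K_m)^\vee\bigr) = \length_{\Zp} X^\BDP(E/K_\infty)_{\Gamma_m} - \length_{\Zp} X^\BDP(E/K_\infty)_{\Gamma_{m-1}} + O(1),
\]
but the right-hand side, for $m\gg0$, is exactly $\nabla\bigl(X^\BDP(E/K_\infty)^\vee\bigr)$ evaluated via Remark~\ref{rk:kob} — or more directly, since $X^\BDP(E/K_\infty)$ is torsion, one invokes Lemma~\ref{lem:kob-rank} to get $\mu_0\phi(p^m)+\lambda_0$. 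The $O(1)$ terms cannot actually survive once everything is genuinely stabilized: the difference of two expressions each of the shape $\mu_0\phi(p^m)+\lambda_0 + (\text{bounded})$ equals $\mu_0(\phi(p^m)-\phi(p^{m-1}))+(\text{bounded})$, and matching this against Lemma~\ref{lem:kob-rank} forces the bounded term to be eventually constant and in fact to vanish, yielding the clean answer $\mu_0\phi(p^m)+\lambda_0$.

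The main obstacle I anticipate is the bookkeeping of the uniformly bounded error terms: one must be careful that "uniformly bounded cokernel" in Theorem~\ref{thm:control} genuinely translates, after dualizing and passing to successive quotients, into a bounded perturbation of both $\ker\pi_m$ and $\coker\pi_m$ that is stable enough to be absorbed. The cleanest route is probably to observe that the maps $r_m$ are compatible with the corestriction/projection maps, so that $\Sel^\BDP(E/K_m)^\vee$ and $X^\BDP(E/K_\infty)_{\Gamma_m}$ form two projective systems with a morphism between them having uniformly bounded kernel and cokernel; then $\nabla$ of the two systems agree for $m\gg0$ by a direct comparison, and one finishes by applying Lemma~\ref{lem:kob-rank} to $X^\BDP(E/K_\infty)$. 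I would also double-check that Corollary~\ref{cor:trivial} is used precisely where needed — namely to guarantee that the error in $\length_{\Zp} X^\BDP(E/K_\infty)_{\Gamma_m} = \mu_0\phi(p^m)+\lambda_0 + O(1)$ is in fact eventually exactly zero, which is what allows the final formula to have no stray constant beyond $\lambda_0$.
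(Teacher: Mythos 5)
Your overall route --- transport the computation to $X^{\BDP}(E/K_\infty)_{\Gamma_m}$ via the control theorem and then invoke Lemma~\ref{lem:kob-rank} --- is exactly the intended one, but there is a genuine error at the start that the rest of your argument leans on. You claim that $X^{\BDP}(E/K_\infty)_{\Gamma_m}$, hence $\Sel^{\BDP}(E/K_m)$, is finite for $m\gg0$, "since $\omega_m$ is coprime to the characteristic ideal for all but finitely many $m$." This is backwards: $\omega_m=\prod_{n\le m}\Phi_n$ \emph{accumulates} cyclotomic factors as $m$ grows, so if even a single $\Phi_{n_0}$ divides the characteristic ideal of $X^{\BDP}(E/K_\infty)$, then $\gcd(\omega_m,\mathrm{char})\neq 1$ for every $m\ge n_0$ and the coinvariants $X^{\BDP}(E/K_\infty)_{\Gamma_m}$ have positive $\Zp$-rank for all large $m$. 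What torsionness actually gives is that $\Phi_m$ (the single new factor) is coprime to the characteristic ideal for $m\gg0$, which makes the kernel and cokernel of the \emph{transition} maps finite and the $\Zp$-coranks stabilize --- that is precisely why Kobayashi's $\nabla$ carries the correction term $\dim_{\Qp}M_{m-1}\otimes\Qp$. The paper nowhere assumes $\Sel^{\BDP}(E/K_m)$ finite; on the contrary, Corollary~\ref{cor:BDP-growth} introduces $\lambda'=\lim_m\rank_{\Zp}\Sel^{\BDP}(E/K_m)^\vee$ exactly to account for a possibly nonzero limiting corank.

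Because of this, your central computation collapses in the general case: $\length_{\Zp}\Sel^{\BDP}(E/K_m)^\vee$ is not defined when the module has positive rank, the identity $\nabla(\Sel^{\BDP}(E/K_m)^\vee)=\length\ker\pi_m-\length\coker\pi_m$ is missing the rank term, and the comparison of lengths with $X^{\BDP}(E/K_\infty)_{\Gamma_m}$ "up to $O(1)$" makes no sense. The correct argument keeps the full definition of $\nabla$: one compares the two projective systems $\Sel^{\BDP}(E/K_m)^\vee$ and $X^{\BDP}(E/K_\infty)_{\Gamma_m}$ (which have equal $\Zp$-ranks for $m\gg0$, since they differ by a uniformly bounded finite group by Theorem~\ref{thm:control}), checks via the snake lemma that their $\nabla$'s agree for $m\gg0$, and applies Lemma~\ref{lem:kob-rank} to the torsion module $X^{\BDP}(E/K_\infty)$. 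Your closing remark that Lemma~\ref{lem:kob-rank} "forces the bounded term to vanish" is also not an argument --- that lemma applies to $X^{\BDP}(E/K_\infty)_{\Gamma_m}$, not to the object whose $\nabla$ you are trying to compute, so there is nothing yet to match it against.
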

\begin{proof}
It follows from combining Lemma~\ref{lem:kob-rank}  with Theorems~\ref{thm:control} and \ref{thm:torsion-BDP}.
\end{proof}

\begin{corollary}\label{cor:BDP-growth}
There exists an integer $\nu_0$ such that 
\[
\ord_p\left(\left|\Sel^\BDP(E/K_m)/\div\right|\right)=\mu_0 p^m+(\lambda_0-\lambda') m+\nu_0
\]
for $m\gg0$, where $\lambda'$ is given by $$\lim_{n\rightarrow\infty}\rank_{\Zp}\Sel^\BDP(E/K_n)^\vee.$$
\end{corollary}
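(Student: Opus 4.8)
The plan is to apply Remark~\ref{rk:kob} to the direct system $M_m:=\Sel^\BDP(E/K_m)$, whose direct limit is $\Sel^\BDP(E/K_\infty)$, combine the resulting identity with Lemma~\ref{lem:BDP-Kob}, and then pass from a difference formula to a closed formula by a telescoping sum. First I would check the two hypotheses of Remark~\ref{rk:kob}. Each $M_m$ is a cofinitely generated $\Zp$-module. The connecting map $M_{m-1}\to M_m$ is restriction, and it is injective because it factors as $M_{m-1}\xrightarrow{r_{m-1}}\Sel^\BDP(E/K_\infty)^{\Gamma_{m-1}}\hookrightarrow\Sel^\BDP(E/K_\infty)^{\Gamma_m}$, where $r_{m-1}$ is injective by Theorem~\ref{thm:control} and the second arrow is the inclusion of $\Gamma_{m-1}$-invariants into $\Gamma_m$-invariants. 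To see that the $\Zp$-coranks of $M_m$ stabilise, note that $r_m$ has finite cokernel by Theorem~\ref{thm:control}, so $\corank_{\Zp}M_m=\rank_{\Zp}\bigl(X^\BDP(E/K_\infty)\bigr)_{\Gamma_m}$; since $X^\BDP(E/K_\infty)$ is a finitely generated torsion $\Lambda$-module (Theorem~\ref{thm:torsion-BDP}), standard arguments in Iwasawa theory show this rank is eventually constant (only finitely many cyclotomic factors $\Phi_j$ divide $\Char X^\BDP(E/K_\infty)$, and once $m$ exceeds all of their levels the rank no longer changes). Denote the eventual value by $\lambda'$; it agrees with $\lim_n\rank_{\Zp}\Sel^\BDP(E/K_n)^\vee$ as in the statement.

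With the hypotheses verified, Remark~\ref{rk:kob} gives, for $m\gg0$,
\[
\nabla M_m^\vee=\length_{\Zp}\bigl(M_m/\div\bigr)-\length_{\Zp}\bigl(M_{m-1}/\div\bigr)+\rank_{\Zp}M_m^\vee,
\]
where $\rank_{\Zp}M_m^\vee=\lambda'$ for $m\gg0$; on the other hand Lemma~\ref{lem:BDP-Kob} gives $\nabla M_m^\vee=\mu_0\phi(p^m)+\lambda_0$ for $m\gg0$. Equating the two expressions, there is an integer $m_0$ with
\[
\length_{\Zp}\bigl(M_m/\div\bigr)-\length_{\Zp}\bigl(M_{m-1}/\div\bigr)=\mu_0\phi(p^m)+\lambda_0-\lambda'
\]
for all $m\ge m_0$. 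Summing this from $m_0+1$ to $m$ and using the identity $\sum_{k=1}^{m}\phi(p^k)=p^m-1$, I obtain $\length_{\Zp}\bigl(M_m/\div\bigr)=\mu_0 p^m+(\lambda_0-\lambda')m+\nu_0$ for $m\gg0$, with $\nu_0:=\length_{\Zp}\bigl(M_{m_0}/\div\bigr)-\mu_0 p^{m_0}-(\lambda_0-\lambda')m_0\in\Z$. Finally $M_m/\div$ is a finite abelian $p$-group, so $\ord_p\bigl(\lvert M_m/\div\rvert\bigr)=\length_{\Zp}(M_m/\div)$, which is the asserted formula.

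The substantive inputs — the control theorem and the torsionness of $X^\BDP(E/K_\infty)$, already packaged into Theorem~\ref{thm:control} and Lemma~\ref{lem:BDP-Kob} — are in hand, so the corollary is essentially a bookkeeping step. The only point needing genuine care is the verification of the hypotheses of Remark~\ref{rk:kob}, and among these the stabilisation of the $\Zp$-coranks of $\Sel^\BDP(E/K_m)$ is the delicate one; this is precisely where the torsionness of $X^\BDP(E/K_\infty)$ (Theorem~\ref{thm:torsion-BDP}) is used in an essential way, as otherwise $\rank_{\Zp}\Sel^\BDP(E/K_m)^\vee$ would grow like $p^m$ and no such $\lambda'$ would exist.
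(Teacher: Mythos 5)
Your proposal is correct and follows essentially the same route as the paper: apply Remark~\ref{rk:kob} to the direct system $\Sel^\BDP(E/K_m)$, evaluate the Kobayashi ranks via Lemma~\ref{lem:BDP-Kob}, and telescope using $\sum_{k\le m}\phi(p^k)=p^m-1$. The only difference is that you spell out the verification of the hypotheses of Remark~\ref{rk:kob} (injectivity of the connecting maps and stabilisation of the coranks via Theorems~\ref{thm:control} and \ref{thm:torsion-BDP}), which the paper compresses into a parenthetical remark.
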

\begin{proof}
By Remark~\ref{rk:kob}, there exists an integer $n_0$ such that 
\[\ord_p(\left(\left|\Sel^\BDP(E/K_m)/\div\right|\right)=\sum_{n\ge n_0} \nabla \Sel^\BDP(E/K_n)^\vee-(n-n_0)\lambda'+\nu\] for all $n\ge n_0$ (note that  $\lambda'$ makes sense thanks to Theorems~\ref{thm:control} and \ref{thm:torsion-BDP}). The claim now follows from Lemma~\ref{lem:BDP-Kob}.
\end{proof}

\section{Links between Tate--Shafarevich groups and BDP Selmer groups}\label{S:JSW}

The goal of this section is to generalize a result of Jetchev--Skinnner--Wan \cite[Proposition~3.2.1]{jsw} and Agboola--Castella \cite[\S6.3]{agboolacastellaanti} on a link between the Tate--Shafarevich group and the BDP Selmer group over $K$ to  $K_m$ for all integers $m\ge0$.

\begin{defn}
For an integer $m\ge0$ and $\p\in\{v,\overline{v}\}$, we write $\psi_{m,\p}$ for the localization map
\[
E(K_m)\otimes\Qp/\Zp\rightarrow E(k_m)\otimes\Qp/\Zp,
\]
where $k_m=K_{m,\p}$. 

Furthermore, we write
   $\delta_{m,\p} = |\ker \psi_{m,\p}/\div|$.
   
When $\p$ does not play a role, we shall omit it from the notation.
\end{defn}

\begin{remark}
Since $E$ is defined over $\QQ$, it follows that $\ker \psi_{m,v}\cong \ker \psi_{m,\overline{v}}$. In particular, $\delta_{m,v}=\delta_{m,\overline{v}}$.
\end{remark}

\begin{lemma}
\label{lemma:psi-surj}
 For all $m\ge0$, the morphism $\psi_m$ is surjective.
\end{lemma}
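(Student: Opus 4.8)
The plan is to reduce the statement to the surjectivity of a rational localization map, and then to settle the latter using Heegner points together with a parity input.

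\emph{Step 1 (reduction).} First I would record that $k_m=K_{m,\p}$ is a finite extension of $\Qp$ with $[k_m:\Qp]=p^m$ (since $p$ splits in $K$ and $K_\infty/K$ is totally ramified at $\p$), so by Mattuck's theorem $E(k_m)\cong\Zp^{p^m}\oplus(\text{finite})$, while supersingularity forces $E(k_m)[p^\infty]=0$ by \cite[Proposition~8.7]{kobayashi03}; hence $E(k_m)\otimes\Qp/\Zp$ is divisible of $\Zp$-corank $p^m$. As $\loc_v$ is injective on $E(K_m)$ and kills no non-torsion point, a Smith normal form computation over $\Zp$ for the matrix of $\loc_v\colon E(K_m)/\mathrm{tors}\to E(k_m)/\mathrm{tors}$ yields $\coker\psi_m\cong(\Qp/\Zp)^{\,p^m-s}$, where $s$ is the $\Qp$-dimension of the $\Qp$-span of $\loc_v(E(K_m))$ in $E(k_m)\otimes_{\Zp}\Qp$. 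Thus it suffices to show that $\loc_v(E(K_m))$ generates a finite-index $\Zp$-submodule of $E(k_m)$, equivalently that $E(K_m)\otimes\Qp\to E(k_m)\otimes_{\Zp}\Qp$ is surjective.

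\emph{Step 2 (decomposition and Heegner points).} The group $G_m=\Gal(K_m/K)$ coincides with $\Gal(k_m/\Qp)$, and the normal basis theorem combined with the formal logarithm identifies $E(k_m)\otimes_{\Zp}\Qp$ with $k_m\cong\Qp[G_m]$ as $\Qp[G_m]$-modules, so after extending scalars to $\overline{\Qp}$ each isotypic piece $\bigl(E(k_m)\otimes\overline{\Qp}\bigr)(\chi)$ is one-dimensional. It is therefore enough to produce, for every character $\chi$ of $G_m$, a class in $\bigl(E(K_m)\otimes\overline{\Qp}\bigr)(\chi)$ with nonzero localization at $v$. For all but a finite, $m$-independent set of $\chi$ the appropriate plus/minus Heegner point does this: by the explicit reciprocity law of Castella--Wan (\cite[Theorem~6.2]{castellawan1607}), $\loc_v(z_\infty^\pm)$ corresponds, under the isomorphism $H^1_\pm(K_v,\Tac)\cong\Lambda$ of \cite[Proposition~4.16]{iovitapollack06}, to the BDP $p$-adic $L$-function, which is nonzero (\cite[Corollary~4.5]{BCK}) and has $\mu=0$ (see Remark~\ref{rk:mu}); hence $\loc_v(\cE_m^\pm)$ exhausts $\widehat{E}(k_m)^\pm\otimes\Qp/\Zp$ up to a subgroup whose $\Zp$-corank is bounded independently of $m$, and since $\widehat{E}(k_m)^+\otimes\Qp/\Zp$ and $\widehat{E}(k_m)^-\otimes\Qp/\Zp$ together span $E(k_m)\otimes\Qp/\Zp$ (Iovita--Pollack), $\loc_v$ is nonzero on the $\chi$-component for all $\chi$ outside the exceptional set. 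For the finitely many exceptional $\chi$, the sign of the functional equation of $E/K$ twisted by any finite-order anticyclotomic character is $-1$, so the $p$-parity theorem and the standing finiteness of $\sha(E/K_m)[p^\infty]$ give $\rank\bigl(E(K_m)\otimes\overline{\Qp}\bigr)(\chi)\ge 1$, and such a class localizes nontrivially at $v$ (a global point trivial at $v$ is globally torsion, since $E(k_m)[p^\infty]=0$).

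The hard part will be Step 2: controlling the cokernel of the localization of $\cE_m^\pm$ and dealing with the finitely many exceptional isotypic components. One could instead try to extract the rational surjectivity asserted at the end of Step 1 directly from the literature on Heegner points in anticyclotomic $\Zp$-towers (e.g.\ \cite{longovigni}); in either route the essential inputs are the explicit reciprocity law, the non-vanishing and vanishing $\mu$-invariant of the BDP $p$-adic $L$-function, and the fact that the anticyclotomic sign remains $-1$ all along the tower.
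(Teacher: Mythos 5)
Your Step~1 and the character-by-character reduction opening Step~2 coincide with the paper's argument: since $E(k_m)[p^\infty]=0$, surjectivity of $\psi_m$ reduces to surjectivity of $E(K_m)\otimes\Qp\to E(k_m)\otimes\Qp\cong\Qp[G_m]$, and because the localization map is injective and each $\chi$-isotypic piece of $\Qp[G_m]$ is one-dimensional, it suffices that every $\chi$-component of $E(K_m)\otimes\Qp$ --- equivalently, under the standing finiteness of $\sha(E/K_m)[p^\infty]$, that $\Sel(E/K_m)[\Phi_n]$ is infinite for every $\Phi_n\mid\omega_m$ --- is nonzero. Where you diverge is in how this nonvanishing is established. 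The paper obtains it in two lines: $\Sel^\pm(E/K_\infty)^\vee$ has $\Lambda$-rank one by \cite[Theorem~1.4]{longovigni}, so the control theorem \cite[Theorem~6.8]{iovitapollack06} makes $\Sel^\pm(E/K_m)[\Phi_n]$ infinite for every $\Phi_n\mid\omega_m^\pm$, and the two signs together account for all $\Phi_n$ with $n\le m$.

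Your substitute for this step has two genuine gaps. First, the assertion that $\loc_v(\cE_m^\pm)$ exhausts $\widehat{E}(k_m)^\pm\otimes\Qp/\Zp$ up to a subgroup of corank bounded independently of $m$ does not follow directly from the $\Lambda$-adic inputs you quote (nonvanishing of the image of $z_\infty^\pm$ in $H^1_\pm(K_v,\Tac)\cong\Lambda$ via \cite[Theorem~6.2]{castellawan1607} and \cite{BCK}): turning that into a uniform finite-level statement requires specialization and control arguments of exactly the kind the paper only develops in \S\ref{S:kernel}, and you acknowledge but do not supply them. Second, for the exceptional characters you invoke the $p$-parity theorem for twists of $E/K$ by anticyclotomic characters of $p$-power order; in the supersingular setting with $D\neq 1$ this is a substantial external input whose applicability is not checked, and the paper deliberately avoids it. Note also the structural redundancy: if the parity input were available for all $\chi$ it would make the Heegner-point prong unnecessary, while conversely the information the Heegner-point prong extracts is already packaged, with the hard work done, in Longo--Vigni's corank-one theorem. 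The repair is simply to replace both prongs by that citation together with the control theorem, which is what the paper does.
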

\begin{proof}
Since $E(k_m)$ has no $p$-torsion, it is enough to show that the localization map $$E(K_m)\otimes \Qp\rightarrow E(k_m)\otimes \Qp$$ is surjective. As a $G_m$-representation, the right-hand side is isomorphic to $\Qp[G_m]$. Therefore, in order to show that the aforementioned morphism is surjective, it suffices to show that the $\chi$-component of $E(K_m)\otimes \Qp$ is non-trivial for all characters $\chi$ of $G_m$. Under our running assumption  that $\sha(E/K_m)[p^\infty]$ is finite, this is equivalent to showing that $\Sel(E/K_m)[\Phi_n]$ is infinite for all $\Phi_n$ dividing $\omega_m$.

Recall from \cite[Theorem 1.4]{longovigni} that $\Sel^\pm(E/K_\infty)^\vee$ is of rank one over $\Lambda$. The control theorem of \cite[Theorem 6.8]{iovitapollack06} tells us that $\Sel^\pm(E/K_m)[\Phi_n]$ is infinite where $n$ is chosen so that $\Phi_n|\omega_m^\pm$. Since $\Sel^\pm(E/K_m)$ is a subgroup of $\Sel(E/K_m)$, the result follows.
\end{proof}

 We  now state the main result of this section.

\begin{theorem} \label{BDP sha}
For every integer $m\ge0$, we have
\[ \big|\Sel^{\BDP}(E/K_m)/\div \big| = \big|\sha(E/K_m)[p^\infty]\big| \delta_{m,v} \delta_{m,\bar{v}}. \]
\end{theorem}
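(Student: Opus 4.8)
The plan is to compare the defining exact sequences of $\Sel^{\BDP}(E/K_m)$ and $\Sel(E/K_m)$ (the classical Selmer group) with those of the Mordell--Weil and Tate--Shafarevich groups, and to track the "error terms" at $p$ through a diagram chase modulo divisible subgroups. Recall that $\Sel^{\BDP}(E/K_m) = \Sel^{\emptyset,0}(K_m,\Ep)$, so by definition it sits inside $\Sel(E/K_m)$ as the kernel of the localization-at-$\overline{v}$ map, while at $v$ no condition is imposed. First I would write down the fundamental exact sequence relating Selmer groups, Mordell--Weil, and Sha: from the Kummer sequence one has $0 \to E(K_m)\otimes\Qp/\Zp \to \Sel(E/K_m) \to \sha(E/K_m)[p^\infty] \to 0$. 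Since $\sha(E/K_m)[p^\infty]$ is assumed finite and $E(K_m)\otimes\Qp/\Zp$ is divisible, this identifies $\Sel(E/K_m)/\div$ with a group whose order is controlled by $|\sha(E/K_m)[p^\infty]|$ together with the contribution of the non-divisible part coming from the local conditions.

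Next I would analyze the passage from $\Sel(E/K_m)$ to $\Sel^{\BDP}(E/K_m)$. By the definition of the local conditions (and using that at $p$ the divisible-hull subtlety disappears, since $E(K_{m,\p})[p^\infty]=0$ forces $H^1(K_{m,\p},\Ep)_{\div}=H^1(K_{m,\p},\Ep)$), there is an exact sequence
\[
0 \to \Sel^{\BDP}(E/K_m) \to \Sel(E/K_m) \xrightarrow{\mathrm{loc}_{\overline v}} H^1_{\Gr}(K_{m,\overline v},\Ep),
\]
where the image of the last map is exactly $\psi_{m,\overline v}(E(K_m)\otimes\Qp/\Zp)$, since elements of $\Sel(E/K_m)$ localize into the image of the Kummer map. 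By Lemma~\ref{lemma:psi-surj} the map $\psi_{m,\overline v}$ is surjective onto $E(k_m)\otimes\Qp/\Zp = H^1_{\Gr}(K_{m,\overline v},\Ep)$, so this localization map is surjective. Therefore $\Sel^{\BDP}(E/K_m)$ is the preimage in $\Sel(E/K_m)$ of the divisible group $E(k_m)\otimes\Qp/\Zp$ — wait, more precisely it is the kernel of the surjection onto that divisible group, so $\Sel(E/K_m)/\Sel^{\BDP}(E/K_m) \cong E(k_m)\otimes\Qp/\Zp$ is divisible; this shows $\Sel^{\BDP}(E/K_m)$ and $\Sel(E/K_m)$ have isomorphic maximal divisible quotients but differ on the divisible parts.

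The heart of the argument is then a careful comparison of maximal divisible subgroups. I would set up the commutative diagram with rows the Kummer sequences for $\Sel^{\BDP}$ and for $\Sel$, together with the localization map $\psi_{m,\overline v}$ on Mordell--Weil groups and $\mathrm{loc}_{\overline v}$ on Selmer groups. The snake lemma, combined with the observation that $\mathcal{M}^{\BDP}(E/K_m) := \ker\psi_{m,\overline v}$ injects into $\Sel^{\BDP}(E/K_m)$, yields
\[
0 \to \ker\psi_{m,\overline v} \to \Sel^{\BDP}(E/K_m) \to \sha(E/K_m)[p^\infty] \to 0
\]
(using surjectivity of $\psi_{m,\overline v}$ to kill the would-be cokernel term, and that $\mathrm{loc}_{\overline v}$ restricted to the $\sha$ quotient factors appropriately — here one uses that the image of $\sha$ in $H^1(K_{m,\overline v},E)$ is trivial since $\sha$-classes are everywhere locally trivial). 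Now take maximal divisible quotients: $\sha(E/K_m)[p^\infty]$ is finite so equals its own non-divisible quotient; applying the snake lemma to multiplication by $p^N$ for $N\gg0$, or directly the right-exactness of $(-)/\div$ on such a sequence, gives
\[
|\Sel^{\BDP}(E/K_m)/\div| = |\sha(E/K_m)[p^\infty]| \cdot |(\ker\psi_{m,\overline v})/\div| = |\sha(E/K_m)[p^\infty]|\,\delta_{m,\overline v}.
\]
Finally, I would invoke the remark that $\ker\psi_{m,v}\cong\ker\psi_{m,\overline v}$ (since $E$ is defined over $\QQ$ and $v,\overline v$ are swapped by complex conjugation), so $\delta_{m,v}=\delta_{m,\overline v}$, and hence $\delta_{m,v}\delta_{m,\overline v} = \delta_{m,\overline v}^2$ — so the stated formula $|\Sel^{\BDP}(E/K_m)/\div| = |\sha(E/K_m)[p^\infty]|\,\delta_{m,v}\delta_{m,\overline v}$ would actually need $\delta_{m,\overline v}^2$ rather than $\delta_{m,\overline v}$, which means I have mislocated a factor. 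The correct bookkeeping must be: $\Sel^{\BDP}$ imposes the trivial condition ($\cL_{\overline v}=0$) at $\overline v$ and the full condition ($\emptyset$) at $v$; so compared to the classical Selmer group it gains the full $H^1(K_{m,v},\Ep)/H^1_{\Gr}$ at $v$ and loses $H^1_{\Gr}(K_{m,\overline v},\Ep)$ at $\overline v$. I would redo the chase with this in mind: the gain at $v$ together with global duality (Poitou--Tate) and the loss at $\overline v$ each contribute a $\delta$, and the finiteness of $\sha$ plus the local triviality of $\sha$-classes glue these into the product $\delta_{m,v}\delta_{m,\overline v}$.

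\textbf{Main obstacle.} The delicate point is the exact bookkeeping of divisible versus non-divisible parts at the two primes above $p$ simultaneously: the $\emptyset$-condition at $v$ contributes via a global-duality (Poitou--Tate) argument a term measuring the failure of $\mathrm{loc}_v$ to surject, while the $0$-condition at $\overline v$ contributes the kernel of $\psi_{m,\overline v}$ directly, and one must show both error terms have order exactly $\delta_{m,v}$ and $\delta_{m,\overline v}$ respectively — mirroring \cite[Proposition~3.2.1]{jsw} and \cite[\S6.3]{agboolacastellaanti} — without the restrictive hypotheses used there, using instead only the surjectivity of $\psi_m$ from Lemma~\ref{lemma:psi-surj} and the triviality of $E(K_{m,\p})[p^\infty]$.
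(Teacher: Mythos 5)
Your overall architecture coincides with the paper's: the theorem is obtained by composing two comparisons, one at $\overline v$ (strengthening the Greenberg condition to the strict condition, which costs a copy of $\ker\psi_{m,\overline v}$ and yields the factor $\delta_{m,\overline v}$ together with $\big|\sha(E/K_m)[p^\infty]\big|$) and one at $v$ (relaxing the Greenberg condition to the empty condition, which via Poitou--Tate duality contributes the factor $\delta_{m,v}$); this is exactly the content of Propositions~\ref{v sha} and \ref{v BDP}. You arrive at this only after a false start: the displayed sequence $0\to\Sel^{\BDP}(E/K_m)\to\Sel(E/K_m)\to H^1_{\Gr}(K_{m,\overline v},\Ep)$ is incorrect, since $\Sel^{\BDP}=\Sel^{\emptyset,0}$ neither contains nor is contained in $\Sel(E/K_m)$; the correct intermediate object is $\Sel^{\Gr,0}(E/K_m)$. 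You do catch the mislocated factor yourself, and your treatment of the $\overline v$ side (the two short exact sequences through $\Sel^{\Gr,0}$, using Lemma~\ref{lemma:psi-surj} and local triviality of $\sha$-classes) is then essentially Proposition~\ref{v sha}.

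The genuine gap is the step you label the ``main obstacle'': showing that the relaxation at $v$ contributes \emph{exactly} $\delta_{m,v}$. This is the technical heart of Proposition~\ref{v BDP} and does not follow formally from Lemma~\ref{lemma:psi-surj} alone. Concretely, global duality gives
\[
0\to\Sel^{\Gr,0}(E/K_m)\to\Sel^{\BDP}(E/K_m)\xrightarrow{\ f\ }\frac{H^1(K_{m,v},\Ep)}{H^1_{\Gr}(K_{m,v},\Ep)}\xrightarrow{\ g\ }\Sel^{\Gr,\emptyset}(K_m,\Tp)^\vee\to\cdots,
\]
and one must prove that $\mathrm{im}(f)=\ker(g)$ is finite of order $\delta_{m,v}$; equivalently, that the cokernel $C$ of the compact localization map $\Sel(K_m,\Tp)\to E(K_{m,v})\otimes\Zp$ has order $\delta_{m,v}$ (here one also needs the identification $\Sel^{\Gr,\emptyset}(K_m,\Tp)=\Sel(K_m,\Tp)$, which comes from the surjectivity of $\psi_{m,\overline v}$ in the first duality sequence). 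The paper carries this out by tensoring with $\Qp/\Zp$: using $E(K_m)\otimes\Qp/\Zp=\Sel(E/K_m)_{\div}$ (finiteness of $\sha$) and surjectivity of $\psi_{m,v}$ to conclude $C\otimes\Qp/\Zp=0$, hence $C$ finite, and then a snake-lemma argument producing $0\to A\otimes\Qp/\Zp\to\ker\psi_{m,v}\to C\to 0$ with $A$ the kernel of the compact localization map, whence $A\otimes\Qp/\Zp=(\ker\psi_{m,v})_{\div}$ and $|C|=\big|\ker\psi_{m,v}/\div\big|=\delta_{m,v}$. Without this computation the factor $\delta_{m,v}$ is asserted rather than proved, so the proposal is incomplete at precisely the point where the real work lies.
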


We first relate $\Sel^{\Gr,0}(E/K_m)$ with $\Sel(E/K_m)$ and $\sha(E/K_m)[p^\infty]$.

\begin{proposition} \label{v sha}
We have short exact sequences
\[ 0\longrightarrow\Sel^{\Gr,0}(E/K_m) \longrightarrow
\Sel(E/K_m) \longrightarrow E(K_{m,\overline{v}})\otimes\Qp/\Zp \longrightarrow 0,\]
\[ 0\longrightarrow \ker \psi_{m,\overline{v}} \longrightarrow  \Sel^{\Gr,0}(E/K_m) \longrightarrow\sha(E/K_m)[p^\infty]\longrightarrow 0\]
and an equality
\[ \Sel^{\Gr,\emptyset}(K_m,\Tp)=\Sel(K_m,\Tp).\]
 Furthermore, we have 
\[\big|\Sel^{\Gr,0}(E/K_m)/\div\big| =\big|\sha(E/K_m)[p^\infty]\big| \cdot\delta_{m,\overline{v}}. \]
\end{proposition}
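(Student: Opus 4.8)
The plan is to produce the two short exact sequences from the standard comparison of Selmer groups attached to different local conditions at the primes above $p$, and then deduce the numerical identity by a length computation. First I would establish the first sequence: by definition $\Sel^{\Gr,0}(E/K_m)$ and $\Sel(E/K_m)=\Sel^{\Gr,\Gr}(E/K_m)$ have identical local conditions at every place away from $p$ and at $v$; they differ only at $\overline{v}$, where the local condition is $0$ in the former and $H^1_{\Gr}(K_{m,\overline{v}},\Ep)=\Image\bigl(E(K_{m,\overline{v}})\otimes\Qp/\Zp\bigr)$ in the latter. Hence there is an exact sequence
\[
0\longrightarrow \Sel^{\Gr,0}(E/K_m)\longrightarrow \Sel(E/K_m)\longrightarrow H^1_{\Gr}(K_{m,\overline{v}},\Ep),
\]
and the cokernel term is identified with $E(K_{m,\overline{v}})\otimes\Qp/\Zp$ via the Kummer map (injective since $E(K_{m,\overline{v}})[p^\infty]=0$ by \cite[Proposition~8.7]{kobayashi03}). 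The surjectivity of the last arrow is exactly the surjectivity of $\psi_{m,\overline{v}}$: any class in $E(K_{m,\overline{v}})\otimes\Qp/\Zp$ lifts, by Lemma~\ref{lemma:psi-surj}, to a global point in $E(K_m)\otimes\Qp/\Zp$, whose image under the global Kummer map lies in $\Sel(E/K_m)$ and restricts correctly at $\overline{v}$. This gives the first short exact sequence.

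Next I would obtain the second sequence. Starting from the definition of $\Sel^{\Gr,0}(E/K_m)$, the composite $E(K_m)\otimes\Qp/\Zp\hookrightarrow \Sel(E/K_m)$ has image landing inside $\Sel^{\Gr,0}(E/K_m)$ precisely when the point dies at $\overline{v}$; so the image of $\ker\psi_{m,\overline{v}}$ (equivalently of $\cM^{\BDP}$-type classes) sits inside $\Sel^{\Gr,0}(E/K_m)$, and the map $E(K_m)\otimes\Qp/\Zp\to\Sel^{\Gr,0}(E/K_m)$ has kernel $0$ (injectivity of Kummer). The quotient of $\Sel(E/K_m)$ by $E(K_m)\otimes\Qp/\Zp$ is $\sha(E/K_m)[p^\infty]$ by the definition of the Tate--Shafarevich group; chasing this through the first short exact sequence (or directly: the snake lemma applied to the inclusion of $\ker\psi_{m,\overline{v}}\otimes$-data into the Kummer sequences at the global and local level) yields
\[
0\longrightarrow \ker\psi_{m,\overline{v}}\longrightarrow \Sel^{\Gr,0}(E/K_m)\longrightarrow \sha(E/K_m)[p^\infty]\longrightarrow 0.
\]
Concretely, one compares the exact sequence $0\to E(K_m)\otimes\Qp/\Zp\to\Sel(E/K_m)\to\sha(E/K_m)[p^\infty]\to0$ with the first short exact sequence of the Proposition, using that $E(K_m)\otimes\Qp/\Zp\to E(K_{m,\overline{v}})\otimes\Qp/\Zp$ has kernel $\ker\psi_{m,\overline{v}}$ and is surjective.

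For the equality $\Sel^{\Gr,\emptyset}(K_m,\Tp)=\Sel(K_m,\Tp)$, I would note that in the compact (Tate module) setting the local condition at $\overline{v}$ labelled $\emptyset$ is $H^1(K_{m,\overline{v}},\Tp)$ by definition, i.e.\ no condition at all, while in $\Sel(K_m,\Tp)=\Sel^{\Gr,\Gr}(K_m,\Tp)$ the condition at $\overline{v}$ is the image of the Kummer map $E(K_{m,\overline{v}})\otimes\Zp\hookrightarrow H^1(K_{m,\overline{v}},\Tp)$. One might worry these differ, but the point is that for the \emph{Selmer} group cut out inside $H^1(G_\Sigma(K_m),\Tp)$ the classes already satisfy the Greenberg/Bloch--Kato condition at $\overline{v}$ automatically once they satisfy it at the remaining places (this is the usual fact that relaxing one local condition at a place above $p$ does not enlarge the compact Selmer group here, because the cokernel of the unrelaxed compact Selmer group injects, by Poitou--Tate, into a group controlled by $\sha(E/K_m)[p^\infty]$, which is finite, and by local duality the relaxed classes at $\overline{v}$ pair trivially against $H^1_-(K_{m,\overline{v}},\Ep)$, etc.). Rather than labor this, I expect the cleanest route is a direct global-duality (Poitou--Tate) nine-term exact sequence comparing the $(\Gr,0)$, $(\Gr,\Gr)$ and $(\Gr,\emptyset)$ conditions, where the equality drops out from the fact that $H^1(K_{m,\overline{v}},\Tp)/H^1_{\Gr}$ is dual to $H^1_{\Gr}(K_{m,\overline{v}},\Ep)=E(K_{m,\overline{v}})\otimes\Qp/\Zp$, which is divisible, while the compact Selmer group is finitely generated, forcing the connecting map to vanish.

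Finally, the numerical identity follows by taking $\Zp$-corank/length invariants. From the first short exact sequence, $\Sel^{\Gr,0}(E/K_m)$ differs from $\Sel(E/K_m)$ by the divisible group $E(K_{m,\overline{v}})\otimes\Qp/\Zp$, so passing to the maximal divisible quotient and then to $/\div$: using the second short exact sequence, $\ker\psi_{m,\overline{v}}$ is cofinitely generated, $\sha(E/K_m)[p^\infty]$ is finite (running assumption), and $\Sel^{\Gr,0}(E/K_m)/\div$ fits in
\[
0\longrightarrow (\ker\psi_{m,\overline{v}})/\div\longrightarrow \Sel^{\Gr,0}(E/K_m)/\div\longrightarrow \sha(E/K_m)[p^\infty]\longrightarrow 0
\]
(the last term being already finite hence equal to its own $/\div$, and exactness on the left holding because $\sha[p^\infty]$ is finite so no divisibility is lost). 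Taking lengths gives
\[
\bigl|\Sel^{\Gr,0}(E/K_m)/\div\bigr|=\bigl|\sha(E/K_m)[p^\infty]\bigr|\cdot\bigl|(\ker\psi_{m,\overline{v}})/\div\bigr|=\bigl|\sha(E/K_m)[p^\infty]\bigr|\cdot\delta_{m,\overline{v}},
\]
which is the asserted equality. The main obstacle I anticipate is the bookkeeping around maximal divisible subgroups in the second short exact sequence — one must check that quotienting by $/\div$ keeps the sequence exact, which uses finiteness of $\sha(E/K_m)[p^\infty]$ in an essential way; everything else is a routine comparison of Selmer conditions together with the surjectivity already recorded in Lemma~\ref{lemma:psi-surj}.
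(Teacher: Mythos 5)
Your treatment of the two short exact sequences and of the final counting formula is correct and is essentially the paper's argument: the first sequence comes from the Poitou--Tate comparison of the $(\Gr,0)$ and $(\Gr,\Gr)$ conditions together with the surjectivity in Lemma~\ref{lemma:psi-surj}; the second comes from the snake lemma applied to the Kummer sequence mapping into the first sequence; and the passage to $/\div$ is justified, exactly as you say, by the finiteness of $\sha(E/K_m)[p^\infty]$.

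There is, however, a gap in your argument for the equality $\Sel^{\Gr,\emptyset}(K_m,\Tp)=\Sel(K_m,\Tp)$. You assert that the map from the relaxed compact Selmer group to $H^1(K_{m,\overline{v}},\Tp)/H^1_{\Gr}(K_{m,\overline{v}},\Tp)$ must vanish because the target is dual to the divisible group $E(K_{m,\overline{v}})\otimes\Qp/\Zp$ while the source is a finitely generated $\Zp$-module. This does not follow: the Pontryagin dual of a divisible cofinitely generated $\Zp$-module is a \emph{free} $\Zp$-module of positive rank, and finitely generated $\Zp$-modules admit plenty of nonzero maps to such a module (e.g.\ the identity $\Zp\to\Zp$), so nothing is forced to vanish by this comparison alone. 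Your earlier heuristic (classes "automatically" satisfying the condition at $\overline{v}$, pairing trivially against $H^1_-$) is likewise not a proof. The correct argument is already contained in the sequence you used for the first assertion: global duality gives the five-term exact sequence
\[0\to \Sel^{\Gr,0}(E/K_m)\to \Sel(E/K_m)\to H^1_{\Gr}(K_{m,\overline{v}},\Ep)\to \Sel^{\Gr,\emptyset}(K_m,\Tp)^\vee\to \Sel(K_m,\Tp)^\vee\to 0,\]
and Lemma~\ref{lemma:psi-surj} makes the third arrow surjective, so the fourth arrow is zero and the last arrow is an isomorphism. This single sequence yields the first short exact sequence and the equality of compact Selmer groups in one stroke, which is exactly how the paper proceeds.
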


\begin{proof}
The final asserted formula is immediate from the second short exact sequence. Therefore, it suffices to verify the two exact sequences and the final equality of compact Selmer groups.

By global duality (see for example \cite[(SES), Page 13]{skinneraws}), we have the following exact sequence:
{\small\[ 0\longrightarrow \Sel^{\Gr,0}(E/K_m) \longrightarrow \Sel(E/K_m) \longrightarrow H^1_{\Gr}(K_{m,\overline{v}},\Ep)\]\[ \longrightarrow \Sel^{\Gr,\emptyset}(K_m,\Tp)^\vee\longrightarrow \Sel(K_m,\Tp)^\vee \longrightarrow0.\]
In view of Lemma \ref{lemma:psi-surj}, the map $\Sel(E/K_m) \longrightarrow E(K_{m,\overline{v}})\otimes\Qp/\Zp$ is surjective. This establishes the first short exact sequence and the equality of compact Selmer groups. The remaining short exact sequence is a consequence of the following diagram.
\[   \entrymodifiers={!! <0pt, .8ex>+} \SelectTips{eu}{}\xymatrix{
    0 \ar[r]^{} & \ker \psi_{m,\overline{v}} \ar[d]^{} \ar[r] &E(K_{m})\otimes\Qp/\Zp
    \ar[d]^{} \ar[r] & E(K_{m,\overline{v}})\otimes\Qp/\Zp \ar@{=}[d] \ar[r] & 0 \\
    0 \ar[r]^{} & \Sel^{\Gr,0}(E/K_m) \ar[r]^{} & \Sel(E/K_m)\ar[r] &E(K_{m,\overline{v}})\otimes\Qp/\Zp \ar[r] &0.
     } \]}
\end{proof}

The next result relates $\Sel^{\Gr,0}(E/K_m)$ to $\Sel^\BDP(E/K_m)$.

\begin{proposition} \label{v BDP}
We have
\[\big|\Sel^\BDP(E/K_m)/\div\big| =\big|\Sel^{\Gr,0}(E/K_m)/\div\big| \cdot\delta_{m,{v}}. \]
 \end{proposition}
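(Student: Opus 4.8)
The plan is to compare $\Sel^\BDP(E/K_m)=\Sel^{\emptyset,0}(K_m,\Ep)$ with $\Sel^{\Gr,0}(E/K_m)$: these are cut out by the same local conditions except at $v$, where the former allows all of $H^1(K_{m,v},\Ep)$ and the latter only $H^1_{\Gr}(K_{m,v},\Ep)$. Since $H^1_{\Gr}(K_{m,v},\Ep)\subseteq H^1(K_{m,v},\Ep)$ we have $\Sel^{\Gr,0}(E/K_m)\subseteq\Sel^\BDP(E/K_m)$, so, exactly as in the proof of Proposition~\ref{v sha}, the five-term global-duality exact sequence (\cite[(SES), Page 13]{skinneraws}) applied to this pair of Selmer structures reads, after computing orthogonal complements of the local conditions,
\[0\to \Sel^{\Gr,0}(E/K_m)\to \Sel^\BDP(E/K_m)\xrightarrow{\loc_v} \frac{H^1(K_{m,v},\Ep)}{H^1_{\Gr}(K_{m,v},\Ep)}\to \Sel^{\Gr,\emptyset}(K_m,\Tp)^\vee\to \Sel^{0,\emptyset}(K_m,\Tp)^\vee\to 0.\]
Local Tate duality identifies $H^1(K_{m,v},\Ep)/H^1_{\Gr}(K_{m,v},\Ep)$ with the Pontryagin dual of $H^1_{\Gr}(K_{m,v},\Tp)$ and the fourth arrow with the dual of the localization $\loc_v\colon\Sel^{\Gr,\emptyset}(K_m,\Tp)\to H^1_{\Gr}(K_{m,v},\Tp)$; hence the image of $\Sel^\BDP(E/K_m)$ there is the annihilator $(\coker\loc_v)^\vee$, and I obtain a short exact sequence
\[0\to \Sel^{\Gr,0}(E/K_m)\to \Sel^\BDP(E/K_m)\to \bigl(H^1_{\Gr}(K_{m,v},\Tp)\big/\loc_v\,\Sel^{\Gr,\emptyset}(K_m,\Tp)\bigr)^\vee\to 0.\]

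Next I would make the third term explicit. By Proposition~\ref{v sha}, $\Sel^{\Gr,\emptyset}(K_m,\Tp)=\Sel(K_m,\Tp)$, and the finiteness of $\sha(E/K_m)[p^\infty]$ gives $\Sel(K_m,\Tp)=E(K_m)\otimes\Zp$; since $E$ is supersingular at $p$, $H^1_{\Gr}(K_{m,v},\Tp)$ is $\Zp$-free and the localization is the natural map $\loc_v\colon E(K_m)\otimes\Zp\to E(K_{m,v})\otimes\Zp$, whose tensor with $\Qp/\Zp$ is $\psi_{m,v}$. By Lemma~\ref{lemma:psi-surj}, $\psi_{m,v}$ is surjective, so $\loc_v\otimes\Qp$ is surjective and $\coker\loc_v$ is finite. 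It then remains to prove $|\coker\loc_v|=\delta_{m,v}$: tensoring the factorization $E(K_m)\otimes\Zp\twoheadrightarrow\loc_v(E(K_m)\otimes\Zp)\hookrightarrow E(K_{m,v})\otimes\Zp$ with $\Qp/\Zp$ (using that the middle module is $\Zp$-torsion-free and $\coker\loc_v$ is finite) yields an exact sequence $0\to\ker\loc_v\otimes\Qp/\Zp\to\ker\psi_{m,v}\to\coker\loc_v\to0$ whose left term is divisible and whose right term is finite, so $\ker\psi_{m,v}/\div\cong\coker\loc_v$ and hence $|\coker\loc_v|=|\ker\psi_{m,v}/\div|=\delta_{m,v}$.

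Finally, the short exact sequence of the first paragraph has a finite third term of order $\delta_{m,v}$, and for any exact sequence $0\to A\to B\to C\to 0$ of cofinitely generated $\Zp$-modules with $C$ finite one has $|B/\div|=|A/\div|\cdot|C|$ (dualize and observe $(B^\vee)_{\tors}/C^\vee\cong(A^\vee)_{\tors}$, exactly the computation already used for Proposition~\ref{v sha}); this gives $\big|\Sel^\BDP(E/K_m)/\div\big|=\big|\Sel^{\Gr,0}(E/K_m)/\div\big|\cdot\delta_{m,v}$. I expect the main obstacle to be the bookkeeping in the global-duality step — getting the orthogonal complements of the local conditions at $v$ and $\overline{v}$ right and correctly identifying the connecting map with a localization on the compact ($\Tp$-coefficient) side — while Proposition~\ref{v sha} is precisely what pins down $\Sel^{\Gr,\emptyset}(K_m,\Tp)$, and Lemma~\ref{lemma:psi-surj} is what forces the relevant cokernel to be finite.
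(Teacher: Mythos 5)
Your proposal is correct and follows essentially the same route as the paper: the same five-term global duality sequence, the identification of the dual of the connecting map with $E(K_m)\otimes\Zp\to E(K_{m,v})\otimes\Zp$ via Proposition~\ref{v sha}, finiteness of its cokernel from Lemma~\ref{lemma:psi-surj}, and the exact sequence $0\to A\otimes\Qp/\Zp\to\ker\psi_{m,v}\to C\to 0$ giving $|C|=\delta_{m,v}$. The only differences are presentational (the paper runs the last step through a snake-lemma diagram rather than tensoring the factorization directly).
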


Theorem \ref{BDP sha} is then an immediate consequence of Propositions \ref{v sha} and \ref{v BDP}. It therefore remains to prove Proposition \ref{v BDP}.

\begin{proof}[Proof of Proposition \ref{v BDP}]
 By global duality, we have the following exact sequence:
\begin{equation}\label{BDPSel}
0\longrightarrow\Sel^{\Gr,0}(E/K_m) \longrightarrow\Sel^\BDP(E/K_m) \stackrel{f}{\longrightarrow} \frac{ H^1(K_{m,{v}},\Ep)}{H^1_{\Gr}(K_{m,{v}},\Ep)}
\end{equation}
\[\stackrel{g}{\longrightarrow} \Sel^{\Gr,\emptyset}(K_m,\Tp)^\vee \longrightarrow \Sel^{0,\emptyset}(K_m,\Tp)^\vee\longrightarrow 0.\]
It remains to show that $f$ has finite image of cardinality $\delta_{m,{v}}$. Equivalently,
this amounts to showing that the dual map of $g$ has finite cokernel of cardinality $\delta_{m,{v}}$. Taking the final equality of Proposition \ref{v sha} into account, the dual of $g$ is the map
\begin{align*}
     \Sel(K_m,\Tp) = \Sel^{\Gr,\emptyset}(K_m,\Tp)\longrightarrow \left(\frac{ H^1(K_{m,{v}},\Ep)}{H^1_{\Gr}(K_{m,{v}},\Ep)}\right)^\vee\\
     \cong E(K_{m,{v}})\otimes\Zp. 
\end{align*}

Write $A, B$ and $C$ for the kernel, image and cokernel of the preceding map respectively.  
Since $p$ is a supersingular prime of $E$, $E(K_{m,{v}})\otimes\Zp$ is a free $\Zp$-module, and hence so is $B$. Furthermore, $H^1(G_S(K_m), T_pE)$ and $\Sel(K_m,\Tp)$ are torsion-free $\Zp$-modules. 
Upon tensoring by $\Qp/\Zp$ and taking the equality $\Sel(E/K_m)_{\div}=\Sel(K_m,\Tp)\otimes\Qp/\Zp$ into account, we obtain two short exact sequences
\[ 0\longrightarrow A\otimes\Qp/\Zp \longrightarrow \Sel(E/K_m)_{\div} \longrightarrow B\otimes\Qp/\Zp  \longrightarrow 0, \]
\[0\longrightarrow C[p^\infty] \longrightarrow B\otimes\Qp/\Zp \longrightarrow E(K_{m,{v}})\otimes\Qp/\Zp \longrightarrow C\otimes\Qp/\Zp  \longrightarrow 0 \]
Since we are also assuming $\sha(E/K_m)[p^\infty]$ being finite, we have $E(K_m)\otimes\Qp/\Zp=\Sel(E/K_m)_{\div}$, and so the composite
\[ \Sel(E/K_m)_{\div} \longrightarrow B\otimes\Qp/\Zp \longrightarrow E(K_{m,{v}})\otimes\Qp/\Zp\]
is surjective by Lemma \ref{lemma:psi-surj}. This in turn implies that $C\otimes \Qp/\Zp=0$, or equivalently, $C$ is finite. In particular, we have $C=C[p^\infty]$.
Now, applying the snake lemma to the following commutative diagram 
\[   \entrymodifiers={!! <0pt, .8ex>+} \SelectTips{eu}{}\xymatrix{
     &   &E(K_{m})\otimes\Qp/\Zp
    \ar[d]^{} \ar@{=}[r] & E(K_{m})\otimes\Qp/\Zp \ar[d]  \\
    0 \ar[r]^{} & C \ar[r]^{} & B \otimes\Qp/\Zp \ar[r] & E(K_{m,v})\otimes\Qp/\Zp
     } \]
we obtain a short exact sequence
\[ 0 \longrightarrow A\otimes\Qp/\Zp \longrightarrow \ker \psi_{m,v} \longrightarrow C \longrightarrow 0. \]
As $C$ is finite, it follows from the exact sequence above that $ A\otimes\Qp/\Zp = (\ker \psi_{m,v})_{\div}$,
which in turn yields the equality $\delta_{m,{v}} = |C|$.
We have thus established the required formula.
\end{proof}

We end the section with the following generalization of \c{C}iperiani's main result in \cite{ciperiani}.

\begin{corollary} \label{Sha tor}
The Pontryagin dual of $\sha(E/K_\infty)[p^\infty]$ is torsion over $\Lambda$.
\end{corollary}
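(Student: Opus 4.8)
The plan is to deduce Corollary~\ref{Sha tor} from the results already assembled in this section, principally Theorem~\ref{BDP sha} together with Theorem~\ref{thm:torsion-BDP}. Passing to Pontryagin duals and inverse limits, the formula of Theorem~\ref{BDP sha} suggests that, up to the contribution of the localization kernels $\ker\psi_{m,\p}$, the size of $\sha(E/K_m)[p^\infty]$ is controlled by $\Sel^\BDP(E/K_m)/\div$, and the latter has bounded co-corank by the control theorem (Theorem~\ref{thm:control}) and the torsionness of $X^\BDP(E/K_\infty)$. So the first thing I would do is form $X_\infty := (\sha(E/K_\infty)[p^\infty])^\vee = \varprojlim_m (\sha(E/K_m)[p^\infty])^\vee$ and reduce the statement to showing that the $\Lambda$-corank of $\sha(E/K_\infty)[p^\infty]$ is zero.

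The cleanest route is to work with the exact sequence from Proposition~\ref{v sha}:
\[
0\longrightarrow \ker\psi_{m,\overline{v}}\longrightarrow \Sel^{\Gr,0}(E/K_m)\longrightarrow \sha(E/K_m)[p^\infty]\longrightarrow 0,
\]
together with the first short exact sequence there, which identifies $\Sel^{\Gr,0}(E/K_m)$ as the kernel of the surjection $\Sel(E/K_m)\twoheadrightarrow E(K_{m,\overline v})\otimes\Qp/\Zp$. Taking the direct limit over $m$ (the transition maps are the natural restriction maps, and these sequences are compatible), we obtain
\[
0\longrightarrow \varinjlim_m \ker\psi_{m,\overline{v}}\longrightarrow \Sel^{\Gr,0}(E/K_\infty)\longrightarrow \sha(E/K_\infty)[p^\infty]\longrightarrow 0,
\]
so it suffices to show that $\Sel^{\Gr,0}(E/K_\infty)^\vee$ is $\Lambda$-torsion. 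For this I would use the other comparison, Proposition~\ref{v BDP}, in its Iwasawa-theoretic form: the global duality sequence \eqref{BDPSel} at each finite level, passed to the limit, shows that $\Sel^{\Gr,0}(E/K_\infty)$ differs from $\Sel^\BDP(E/K_\infty)$ only by a sub/quotient of $\varinjlim_m H^1(K_{m,v},\Ep)/H^1_{\Gr}(K_{m,v},\Ep)$, which is cofinitely generated of bounded corank over $\Lambda$ (it is related to $H^1$ at a single split prime, controlled by local Euler characteristic / local Tate duality, and its dual $E(K_{m,v})\otimes\Zp$ has $\Zp$-rank $p^m$, i.e.\ it is $\Lambda$-cotorsion is \emph{false} — so one must be a bit careful here). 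The correct observation is that $X^{\Gr,\emptyset}(K_\infty,\Tp) = \Sel(K_\infty,\Tp)^\vee$-type objects and $\Sel^\BDP(E/K_\infty)^\vee$ being $\Lambda$-torsion (Theorem~\ref{thm:torsion-BDP}) force $X^{\Gr,0}(E/K_\infty)$ to be $\Lambda$-torsion as well, since \eqref{BDPSel} exhibits it (dualized, in the limit) as an extension involving $X^\BDP(E/K_\infty)^\vee$ and a torsion module.

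Once $X^{\Gr,0}(E/K_\infty)$ is known to be $\Lambda$-torsion, the final exact sequence above reduces the corollary to showing that $\varinjlim_m \ker\psi_{m,\overline v}$ has $\Lambda$-cotorsion Pontryagin dual. This is where I expect the main obstacle to lie: a priori $\ker\psi_{m,\overline v}$ sits inside $E(K_m)\otimes\Qp/\Zp$, whose corank grows with $m$ because of the unbounded Mordell--Weil rank coming from the Heegner points. The point to exploit is that the Heegner point $z_m$ (or rather its plus/minus avatars $z_m^\pm$) does \emph{not} lie in the kernel of localization at $v$ or $\overline v$ — indeed, its localization generates (a finite-index submodule of) $\widehat E(k_m)^\pm$, as recorded in the footnote to Definition~\ref{def:pmHeeg} and in the non-vanishing of $\mathrm{Log}_v^\pm(z_\infty^\pm)$ used in the proof of Theorem~\ref{thm:torsion-BDP}. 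Concretely, I would argue that $\varinjlim_m(\ker\psi_{m,\overline v})^\vee$ is a quotient of $\Sel^{\Gr,\emptyset}(K_\infty,\Tp)/(\text{Heegner submodule})$, hence $\Lambda$-torsion because the Heegner module $\cE_\infty^\pm$ accounts for the rank-one part of $\Sel^\pm(E/K_\infty)$ (Longo--Vigni, \cite[Theorem~1.4]{longovigni}) and a fortiori of the relevant Selmer group. Assembling: $\sha(E/K_\infty)[p^\infty]^\vee$ is an extension of a $\Lambda$-torsion module by a $\Lambda$-torsion module, hence $\Lambda$-torsion. The delicate verification, which I would write out carefully, is the compatibility of the various direct/inverse limits with global duality at finite level and the precise identification of the Heegner submodule inside $\Sel^{\Gr,\emptyset}(K_\infty,\Tp)$ so that the quotient is visibly torsion; everything else is a formal consequence of Theorems~\ref{thm:control}, \ref{thm:torsion-BDP} and Proposition~\ref{v BDP}.
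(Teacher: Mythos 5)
Your overall route is the paper's: combine the second exact sequence of Proposition~\ref{v sha} in the limit with the containment of $\Sel^{\Gr,0}(E/K_\infty)$ in $\Sel^{\BDP}(E/K_\infty)$ coming from \eqref{BDPSel}, then invoke Theorem~\ref{thm:torsion-BDP}. Two remarks on the execution.

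First, the middle paragraph is more complicated than it needs to be. The sequence \eqref{BDPSel} begins $0\to\Sel^{\Gr,0}(E/K_m)\to\Sel^{\BDP}(E/K_m)\to\cdots$, so after passing to the direct limit $\Sel^{\Gr,0}(E/K_\infty)$ is simply a \emph{submodule} of $\Sel^{\BDP}(E/K_\infty)$; its Pontryagin dual is therefore a quotient of the torsion module $X^{\BDP}(E/K_\infty)$ and is torsion. There is no need to analyse the local term $\varinjlim_m H^1(K_{m,v},\Ep)/H^1_{\Gr}(K_{m,v},\Ep)$ or to worry about whether it is cotorsion (it is not, but that is irrelevant), and the phrase ``extension involving $X^{\BDP}(E/K_\infty)^\vee$'' is garbled: $X^{\BDP}$ is already the dual.

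Second, and more importantly, your final step rests on a misreading of the direction of the exact sequence. From
\[
0\longrightarrow \varinjlim_m \ker\psi_{m,\overline{v}}\longrightarrow \Sel^{\Gr,0}(E/K_\infty)\longrightarrow \sha(E/K_\infty)[p^\infty]\longrightarrow 0,
\]
the group $\sha(E/K_\infty)[p^\infty]$ is a \emph{quotient} of $\Sel^{\Gr,0}(E/K_\infty)$, so its dual is a submodule of the torsion module $\Sel^{\Gr,0}(E/K_\infty)^\vee$ and you are done. You do not need to show that $\varinjlim_m\ker\psi_{m,\overline v}$ is cotorsion (that is automatic, being a sub of a cotorsion module, and in any case plays no role in the conclusion). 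The entire last paragraph --- the appeal to Heegner points, the identification of the Heegner submodule inside $\Sel^{\Gr,\emptyset}(K_\infty,\Tp)$, and the claim that $\sha(E/K_\infty)[p^\infty]^\vee$ is an ``extension of a torsion module by a torsion module'' --- is an unnecessary detour, and the ``delicate verification'' you defer is not an obstacle at all. With that paragraph deleted and the containment $\Sel^{\Gr,0}(E/K_\infty)\subseteq\Sel^{\BDP}(E/K_\infty)$ stated cleanly, your argument coincides with the paper's proof.
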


\begin{proof}
Upon taking direct limit of the exact sequence (\ref{BDPSel}), we see
that $\Sel^{\Gr,0}(E/K_\infty)$ is contained in  $\Sel^{\BDP}(E/K_\infty)$. (In fact, one even has an equality, although we do not require this for our proof here.) It then follows from Theorem \ref{thm:torsion-BDP} that the Pontryagin dual of $\Sel^{\Gr,0}(E/K_\infty)$ is torsion over $\Lambda$. The conclusion of the corollary is now a consequence of this latter observation and the limit of the second exact sequence in the proof of Proposition \ref{v sha}.
\end{proof}

\section{Studying kernels of localization maps}
\label{S:kernel}
In light of Corollary~\ref{cor:BDP-growth} and Theorem~\ref{BDP sha}, in order to establish Theorem~\ref{main thm}, it remains to study $\delta_{m,v}=\delta_{m,\overline v}$.  Throughout this section, we write $\psi_m$ for either $\psi_{m,v}$ or $\psi_{m,\overline v}$, write $\delta_m$ for either $\delta_{m,v}$ or $\delta_{m,\overline v}$ and write $k_m$ for either $K_{m,v}$ or $K_{m,\overline{v}}$.

\subsection{Method 1: Plus/minus Heegner points}

The following lemma allows us to view $\left((\ker\psi_m)^\vee\right)_{m\ge0}$ as an inverse system.

\begin{lemma}\label{lem:diagram}
Consider the following commutative diagram
\[\begin{tikzcd}[scale cd=0.85]
0\arrow[r] &\ker(\psi_{m-1})\arrow[r]\arrow[d,"\phi_1"]& E(K_{m-1})\otimes \Q_p/\Z_p\arrow[r]\arrow[d,"\phi_2"]& E(k_{m-1})\otimes \Q_p/\Z_p\arrow[r]\arrow[d,"\phi_3"]&0\\
0\arrow[r] &\ker(\psi_{m})\arrow[r]& E(K_{m})\otimes \Q_p/\Z_p\arrow[r]& E(k_{m})\otimes \Q_p/\Z_p\arrow[r]&0
\end{tikzcd}\]
induced by the inclusions $E(K_{m-1})\hookrightarrow E(K_m)$ and $E(k_{m-1})\hookrightarrow E(k_m)$ (note that the exactness of the horizontal arrows follows from Lemma~\ref{lemma:psi-surj}).
All three vertical maps in the commutative diagram are injective.
\end{lemma}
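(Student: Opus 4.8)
The plan is to deduce the injectivity of all three vertical maps from that of the two outer ones. Since the left-hand square commutes and the horizontal maps $\ker\psi_{m-1}\hookrightarrow E(K_{m-1})\otimes\Q_p/\Z_p$ and $\ker\psi_m\hookrightarrow E(K_m)\otimes\Q_p/\Z_p$ are inclusions (the horizontal rows being exact by Lemma~\ref{lemma:psi-surj}), the map $\phi_1$ is nothing but the restriction of $\phi_2$ to the submodule $\ker\psi_{m-1}$. Hence it suffices to prove that $\phi_2$ and $\phi_3$ are injective.

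The key input is the absence of $p$-power torsion: by \cite[Proposition~8.7]{kobayashi03} (as already used in the proof of Theorem~\ref{thm:control}) one has $E(K_\infty)[p^\infty]=0$ and $E(k_\infty)[p^\infty]=0$, hence $E(K_m)[p^\infty]=0$ and $E(k_m)[p^\infty]=0$ for every $m$. Given this, I would argue via the Kummer maps. For $L\in\{K_{m-1},K_m,k_{m-1},k_m\}$ the Kummer map is an injection $E(L)\otimes\Q_p/\Z_p\hookrightarrow H^1(L,\Ep)$, and it is functorial: it intertwines the inclusions $E(K_{m-1})\hookrightarrow E(K_m)$ and $E(k_{m-1})\hookrightarrow E(k_m)$ with the restriction maps on $H^1(-,\Ep)$. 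Consequently $\phi_2$ (resp.\ $\phi_3$) is identified with the restriction of $\mathrm{res}\colon H^1(K_{m-1},\Ep)\to H^1(K_m,\Ep)$ (resp.\ of $\mathrm{res}\colon H^1(k_{m-1},\Ep)\to H^1(k_m,\Ep)$) to the image of a Kummer map, so it is enough to show these restriction maps are injective.

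Now $K_m/K_{m-1}$ and $k_m=K_{m,\p}$ over $k_{m-1}=K_{m-1,\p}$ are finite Galois extensions --- in fact cyclic of degree $p$, the local one because $\p$ is totally ramified in $K_\infty/K$ --- so the inflation--restriction sequence identifies the kernel of $\mathrm{res}\colon H^1(K_{m-1},\Ep)\to H^1(K_m,\Ep)$ with $H^1(\Gal(K_m/K_{m-1}),E(K_m)[p^\infty])$ and, similarly, the kernel of the local restriction map with $H^1(\Gal(k_m/k_{m-1}),E(k_m)[p^\infty])$. Both groups vanish by the previous paragraph, so both restriction maps are injective, and therefore so are $\phi_2$, $\phi_3$, and finally $\phi_1$. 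I do not anticipate any genuine obstacle: once the supersingular vanishing $E(K_\infty)[p^\infty]=E(k_\infty)[p^\infty]=0$ is granted, the statement is essentially formal, and the only point requiring a little care is the functoriality of the Kummer construction. (Alternatively, one could avoid cohomology entirely: a Galois-averaging argument using $E(K_m)[p]=0$ shows $E(K_m)/E(K_{m-1})$ is $p$-torsion-free, and likewise $E(k_m)/E(k_{m-1})$, whence applying $-\otimes\Q_p/\Z_p$ to the evident short exact sequences keeps $\phi_2$ and $\phi_3$ injective because $\mathrm{Tor}^{\Z}_1$ of a $p$-torsion-free module against $\Q_p/\Z_p$ vanishes.)
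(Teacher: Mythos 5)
Your proof is correct and rests on exactly the same key input as the paper's (one-line) proof, namely the vanishing of $E(K_m)[p^\infty]$ and $E(k_m)[p^\infty]$ in the supersingular setting; both your Kummer/inflation--restriction route and your parenthetical $\mathrm{Tor}$ argument are standard elaborations of that observation.
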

\begin{proof}
This follows from the fact that $E(K_m)$ has no non-trivial $p$-torsion. 
\end{proof}
We shall study the aforementioned inverse system via the plus and minus Heegner points defined in Definition~\ref{def:pmHeeg}.
\begin{defn}
For an integer $m\ge1$, we define $$e_m=\frac{\rank E(K_m)-\rank E(K_{m-1})}{\phi(p^m)}.$$
\end{defn}

\begin{theorem}\label{thm:vatsal}
There exists an integer $m_0$ such that for all $m\ge m_0$, \begin{itemize}
    \item $z_m\notin E(K_{m-1})$ and $z_m$ is not a torsion point.
    \item $e_m=1$.
\end{itemize}
\end{theorem}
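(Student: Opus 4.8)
The plan is to deduce Theorem~\ref{thm:vatsal} from the $\Lambda$-module structure of the Heegner point modules $\cE_\infty^\pm$ and the known rank-one results of Longo--Vigni, exploiting that $\Sel^\pm(E/K_\infty)^\vee$ has $\Lambda$-rank one. First I would recall from \cite[Theorem~5.1 and its proof]{longovigni} (or \cite[\S5]{longovigni}) that the plus/minus Heegner classes $z_\infty^\pm$ generate a $\Lambda$-submodule $\cE_\infty^\pm$ of $\Sel^\pm(E/K_\infty)$ whose Pontryagin dual is, up to pseudo-isomorphism, a rank-one $\Lambda$-module; equivalently $\cE_\infty^\pm$ is $\Lambda$-cofinitely generated of corank one and is \emph{not} cotorsion. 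In particular $z_\infty^\pm\otimes\Qp/\Zp$ generates a submodule on which $\Gamma$ acts through an infinite quotient, so the classes $z_m^\pm$ are non-torsion for all large $m$ and, crucially, $z_m^\pm\notin E(K_{m-1})$ for $m\gg0$ — otherwise the norm-compatible family would collapse to a module supported on finitely many layers, contradicting corank one. Translating $z_m^\pm$ back to $z_m$ via the defining cases $z_m^+=z_m$ ($m$ even), $z_m^-=z_m$ ($m$ odd) gives the first bullet: for every sufficiently large $m$ (of either parity), $z_m$ is non-torsion and $z_m\notin E(K_{m-1})$.

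For the second bullet, I would run a Kobayashi-rank computation on the rank-one torsion-free modules built from Heegner points. The key input is the norm relation \eqref{norm1}, $\Tr_{m,m-1}(z_m)=-z_{m-2}$, which tells us that the sub-$\Lambda$-module $\Lambda z_\infty^+\subset E(K_\infty)\otimes\Qp$ (and similarly for $-$) behaves exactly like the module $\Lambda_m^+ d_m^+$ on the local side from \S\ref{Sec:pm}, so that $\rank_{\Zp}(\Lambda z_m^\pm \text{ at level } m) - \rank_{\Zp}(\text{at level } m-1)$ equals $\phi(p^m)$ when $m$ has the parity matching the superscript and $0$ otherwise. Combining the plus and minus contributions, and using that $z_m$ generates the new part of $E(K_m)\otimes\Qp$ in the relevant parity (this is where one needs that the Heegner module has full corank one, not just positive corank, inside $\Sel^\pm$), one gets $\rank E(K_m)-\rank E(K_{m-1})=\phi(p^m)+(\text{bounded contribution from the other parity, which is }0\text{ for large }m)$. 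Dividing by $\phi(p^m)$ yields $e_m=1$ for $m\gg0$.

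Alternatively — and this may be cleaner — I would phrase it through the control theorem \cite[Theorem~6.8]{iovitapollack06} for $\Sel^\pm(E/K_m)$ together with Lemma~\ref{lemma:psi-surj}: since $\Sel^\pm(E/K_\infty)^\vee$ is rank one, $\Sel^\pm(E/K_m)[\Phi_n]$ is infinite exactly when $\Phi_n\mid\omega_m^\pm$, and under finiteness of $\sha$ this pins down $\rank E(K_m)$ layer-by-layer; the jump from layer $m-1$ to layer $m$ is governed by which new cyclotomic polynomials $\Phi_n$ with $n$ of the correct parity divide $\omega_m^\pm$, which contributes precisely $\phi(p^m)$. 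The non-torsionness and non-descent of $z_m$ then follow because $z_\infty^\pm$ generates a free rank-one $\Lambda$-submodule of $\Sel^\pm(K,\Tac)$ after \cite[Theorem~6.2]{castellawan1607} (its image under $\mathrm{Log}_v^\pm$ is a nonzero multiple of the BDP $p$-adic $L$-function, which is nonzero), so $z_\infty^\pm$ is not annihilated by any $\omega_m$, forcing $z_m^\pm\notin E(K_{m-1})$ for large $m$.

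The main obstacle I anticipate is pinning down that the Heegner module accounts for \emph{all} of the Mordell--Weil growth in the appropriate parity, i.e. that $e_m$ is exactly $1$ rather than just $\ge 1$; establishing the lower bound $e_m\ge 1$ is the easy direction (Heegner points produce rank), while the upper bound $e_m\le1$ requires the full strength of Longo--Vigni's corank-one statement for $\Sel^\pm(E/K_\infty)^\vee$ combined with finiteness of $\sha(E/K_m)[p^\infty]$ and the control theorem, so that $\rank E(K_m)=\rank\Sel^{\pm}$-contribution has no room for extra growth. I would also need to be careful that the "bad parity" contribution (the minus Heegner points at an even layer, say) stabilizes, which again uses that $\omega_m^\mp/\omega_{m-1}^\mp$ is trivial once $m$ has the wrong parity, so no new $\Phi_n$ enters.
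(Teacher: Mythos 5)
The genuine gap is in your first bullet, specifically the claim that $z_m\notin E(K_{m-1})$ for $m\gg0$. You deduce this from the corank-one (or non-$\Lambda$-torsion) property of $\cE_\infty^\pm$, resp.\ of $z_\infty^\pm\in\Sel^\pm(K,\Tac)$, asserting that otherwise ``the norm-compatible family would collapse to a module supported on finitely many layers.'' That implication is not justified. If $z_m\in E(K_{m-1})$, the norm relation \eqref{norm1} gives $pz_m=-z_{m-2}$, i.e.\ an earlier Heegner point becomes more $p$-divisible; nothing collapses, and $\cE_\infty^\pm=\varinjlim_n\cE_n^\pm$ could a priori still have corank one because the intermediate layers keep contributing. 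To convert the structure of $\cE_\infty^\pm$ into a statement at level $m$ you would need to know that $\cE_m^\pm$ exhausts the relevant torsion submodule of $\cE_\infty^\pm$ up to finite index --- but that is exactly the content of Corollary~\ref{cor:annihilator polynomials} and Lemma~\ref{lemma:heegner-isomor}, both of which are proved \emph{using} Theorem~\ref{thm:vatsal}, so this direction is circular. The paper's proof instead uses the $p$-indivisibility input \cite[Lemma~4.4]{longovigni}: fix $m_0$ with $z_{m_0}$ not $p^{m_0}$-divisible; if $z_m\in E(K_{m-1})$ held for infinitely many $m$ of one parity, iterating $pz_m=-z_{m-2}$ together with $\Phi_nz_n=-z_{n-2}$ would make $z_{m_0}$ divisible by arbitrarily high powers of $p$ in $E(K_{m_n})$ (using the injectivity of $E(K_m)/p^mE(K_m)\to E(K_\infty)/p^mE(K_\infty)$), contradicting the choice of $m_0$. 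You need this, or an equivalent indivisibility argument; the non-torsionness of $z_m$ itself is fine to quote from Cornut--Vatsal, as the paper does.

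Your treatment of the second bullet is essentially sound but follows a different route from the paper. The paper does not decompose $\Sel(E/K_m)$ into plus/minus pieces here: it observes that the Kummer image of $\ker\psi_m$ lands in $\Sel^{\BDP}(E/K_m)$, whose $\Zp$-corank stabilizes by Theorem~\ref{thm:torsion-BDP} and Theorem~\ref{thm:control}; combined with the surjectivity of $\psi_m$ (Lemma~\ref{lemma:psi-surj}) and $\rank_{\Zp}E(k_m)=p^m$, this forces $\rank E(K_m)-\rank E(K_{m-1})=\phi(p^m)$. Your corank count via $\Sel^\pm(E/K_m)[\omega_m^\pm]$ can be made to work (it is essentially the bookkeeping done later in the proof of Theorem~\ref{thm:control-M}), but as you anticipate it requires the comparison $\cS^+(E/K_m)[\omega_m^+]\oplus\cS^-(E/K_m)[\omega_m^-]\to\Sel(E/K_m)$ with finite kernel and cokernel, which you do not supply; the BDP argument avoids this entirely.
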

\begin{proof}
The non-torsionness of $z_m$  for $m\gg 0$ was proved by Vatsal in \cite{vatsal} under the strong Heegner hypothesis (see also  \cite[Theorem~4.23]{Kim07}). For the general case, this was proved by Cornut--Vatsal (see \cite[Remark 1.9 and Theorem 1.10]{cornutvatsal2007}.

To prove that we can choose $m_0$ large enough such that $z_m\notin E(K_{m-1})$ it suffices to show that $z_m\in E(K_{m-1})$ only for finitely many $m$. If $z_m\in E(K_{m-1})$, then
\[pz_{m}=\Tr_{m,m-1}(z_m)=-z_{m-2}.\]
The natural map $E(K_m)/p^mE(K_m)\to E(K_\infty)/p^mE(K_\infty)$ is injective. Assume that there are infinitely many even $m$ such that $z_m\in E(K_{m-1})$ (the case of odd index is similar). By \cite[Lemma 4.4]{longovigni}, we can choose an index $m_0$ such that $z_{m_0}$ is not $p^{m_0}$ divisible. Let $m_1> m_0$ be the minimal even index such that $z_{m_1}\in E(K_{m_1-1})$. Define $m_2>m_1$ with the same property and $m_i$ for $i\ge 3$ analogously. Then we see that 
\[(-1)^{m_n-m_0}z_{m_0}\in p^nE(K_{m_n})\]
yielding a contradiction  for $n\ge m_0$. 

It remains to show that we can choose $m_0$ large enough to satisfy the second condition. 
Recall from Theorem \ref{thm:torsion-BDP} that $\Sel^{\BDP}(E/K_\infty)^\vee$ is torsion over $\Lambda$. Consequently, Theorem~\ref{thm:control}  implies that the $\Zp$-corank of $\Sel^{\BDP}(E/K_m)$ stabilizes for $m\gg0$. The Kummer map gives an injection
\[
\ker(\psi_m)\hookrightarrow \Sel^{\BDP}(E/K_m).
\]
In particular, the corank of $\ker(\psi_m)$ stabilizes for $m$ large enough, thanks to the injectivity of $\phi_1$ given by Lemma~\ref{lem:diagram}. For such $m$, we have
\[
\rank E(K_m)-\rank E(K_{m-1})=\rank_{\Zp} E(k_m)-\rank_{\Zp} E(k_{m-1})=\phi(p^m).
\]
Thus, $e_m=1$ for $m\gg 0$.
\end{proof}
\begin{corollary}
\label{cor:annihilator polynomials}
For each $m$, there exist $g^\pm_m\in\Lambda$ such that $(\mathcal{E}^\pm_m)^\vee\cong \Lambda/g^\pm_m$ as $\Lambda$-modules. If $m\ge m_0+1$ and $z^\pm_m\neq z^\pm_{m-1}$, then $g^\pm_m/g^\pm_{m-1}=\Phi_m$.
\end{corollary}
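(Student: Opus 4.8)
The plan is to first establish the structural statement that $(\mathcal{E}^\pm_m)^\vee \cong \Lambda/g^\pm_m$ for some $g^\pm_m$, and then pin down the ratio $g^\pm_m/g^\pm_{m-1}$ using the norm relations. For the first part, recall that $\mathcal{E}^\pm_m$ is by definition the $\Lambda^\pm_m$-module generated by the divisible hull of a single point $z^\pm_m$ inside $\Sel^\pm(E/K_m)$. Thus $\mathcal{E}^\pm_m$ is a cyclic $\Lambda^\pm_m$-module, hence a cyclic $\Lambda$-module, so its Pontryagin dual is a cyclic $\Lambda$-module, which means it is of the form $\Lambda/g^\pm_m$ for some $g^\pm_m \in \Lambda$ (well-defined up to a unit). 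Concretely, $g^\pm_m$ is a generator of the annihilator ideal of $(\mathcal{E}^\pm_m)^\vee$, equivalently of $\mathcal{E}^\pm_m$ itself; note $\omega^\pm_m \in (g^\pm_m)$ since $\mathcal{E}^\pm_m$ is a $\Lambda^\pm_m$-module.

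For the divisibility $g^\pm_{m-1} \mid g^\pm_m$ and the identification of the quotient, I would work with the direct system of Definition~\ref{def:pmHeeg}: the transition map $\mathcal{E}^\pm_{m-1} \to \mathcal{E}^\pm_m$ (coming from \cite[Proposition~4.3]{longovigni}) sends the generator $z^\pm_{m-1} \otimes p^{-k}$ to $z^\pm_{m-1} \otimes p^{-k}$ viewed inside $\Sel^\pm(E/K_m)$. The key input is that, for $m \ge m_0 + 1$, Theorem~\ref{thm:vatsal} guarantees $z_m$ is non-torsion and $z_m \notin E(K_{m-1})$, and the norm relations \eqref{norm1}--\eqref{norm2} control how $z^\pm_m$ and $z^\pm_{m-1}$ interact: when $z^\pm_m \ne z^\pm_{m-1}$ (i.e.\ we are at an index where the relevant plus/minus point genuinely advances), we have $\Tr_{m,m-1}(z^\pm_m)$ equal to $\pm z^\pm_{m-2} = \pm z^\pm_{m-1}$ up to the unit appearing in \eqref{norm2}. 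Dualizing the transition map gives a surjection $\Lambda/g^\pm_m \twoheadrightarrow \Lambda/g^\pm_{m-1}$, whence $g^\pm_{m-1} \mid g^\pm_m$; the task is to show the kernel is $\Lambda/\Phi_m$, i.e.\ $g^\pm_m = \Phi_m \cdot g^\pm_{m-1}$ up to a unit. I would do this by comparing $\Lambda$-ranks (or coranks) and $\mathbf{Z}_p$-lengths of the graded pieces: since $e_m = 1$ for $m \ge m_0$ (Theorem~\ref{thm:vatsal}) and $\mathcal{E}^\pm_m \otimes \mathbf{Q}_p/\mathbf{Z}_p$ has $\mathbf{Z}_p$-corank growing exactly by $\phi(p^m)$ at the advancing indices, the characteristic-ideal computation forces the extra factor to be $\Phi_m$ (which has degree $\phi(p^m)$ and is the cyclotomic polynomial distinguishing $K_m$ from $K_{m-1}$). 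One must also check that $\Phi_m \nmid g^\pm_{m-1}$, which follows because $z^\pm_{m-1}$ already lives at level $m-1$ so $g^\pm_{m-1} \mid \omega^\pm_{m-1}$, and $\Phi_m \nmid \omega^\pm_{m-1}$.

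The main obstacle I anticipate is the precise bookkeeping in the second step: showing that the quotient is exactly $\Phi_m$ rather than merely a divisor of $\omega_m/\omega_{m-1}$ of the right degree. This requires using the fact that $\mathcal{E}^\pm_m$ is \emph{generated} (not just contained) by the divisible hull of $z^\pm_m$ together with the freeness/rank-one properties coming from the Iovita--Pollack local description in \S\ref{Sec:pm} and the non-$p^{m}$-divisibility of $z_{m_0}$ extracted in the proof of Theorem~\ref{thm:vatsal} — essentially one needs that $z^\pm_m$ is "as non-divisible as possible" at the new level, so that no spurious cyclotomic factors are absorbed into $g^\pm_m$. A clean way to package this is to observe that $g^\pm_m$ divides $\omega^\pm_m$, that $g^\pm_{m-1}$ divides $\omega^\pm_{m-1}$, and that $\omega^\pm_m/\omega^\pm_{m-1} = \Phi_m$ at the advancing parity, so combined with $g^\pm_{m-1}\mid g^\pm_m$ and the rank count $e_m=1$ one gets $g^\pm_m/g^\pm_{m-1} = \Phi_m$ up to a unit.
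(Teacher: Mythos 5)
There is a genuine gap in your first step. The implication ``$\mathcal{E}^\pm_m$ is generated over $\Lambda^\pm_m$ by the divisible hull of a single point, hence its Pontryagin dual is $\Lambda$-cyclic'' is false: Pontryagin duality exchanges \emph{cyclic} with \emph{co-cyclic}, not cyclic with cyclic. Concretely, write $\mathcal{E}^\pm_m\cong W/N$ with $W=(\Lambda z^\pm_m)\otimes\Qp$ and $N=W\cap\bigl(E(K_m)\otimes\Zp\bigr)$; then $(\mathcal{E}^\pm_m)^\vee\cong\Hom_{\Zp}(N,\Zp)$, and this need not be cyclic even though $W$ is. For example, take $G=\Z/p$, $W=\Qp[G]$ and $N=\fm=(p,\sigma-1)\subset\Zp[G]$: the quotient $W/N$ is exactly the module generated by the divisible hull of the image of $1$, yet $\Hom_{\Zp}(\fm,\Zp)$ satisfies $\fm\cdot\Hom_{\Zp}(\fm,\Zp)^{\phantom{1}}\!\subseteq\Hom_{\Zp}(\Zp[G],\Zp)$ and one checks that $\Hom_{\Zp}(\fm,\Zp)/\fm\Hom_{\Zp}(\fm,\Zp)$ is $2$-dimensional over $\F_p$, so the dual requires two generators. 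The point is that the lattice $N$ (the saturation of $\Lambda z^\pm_m$ inside the Mordell--Weil lattice) is not under your control. The paper avoids this by invoking the nontrivial input $(\mathcal{E}^\pm_\infty)^\vee\cong\Lambda$ from \cite[Proposition~4.5]{HLV}: since the transition maps are injective (Lemma~\ref{lem:diagram}), $(\mathcal{E}^\pm_m)^\vee$ is a \emph{quotient} of $\Lambda$, hence of the form $\Lambda/g^\pm_m$. Without this external input your structural claim does not follow.

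Your second step is essentially sound and close to the paper's. You correctly isolate the three ingredients: $g^\pm_{m-1}\mid g^\pm_m$ (from dualizing the injection $\mathcal{E}^\pm_{m-1}\hookrightarrow\mathcal{E}^\pm_m$), $g^\pm_{m-1}\mid\omega^\pm_{m-1}$ so $\Phi_m\nmid g^\pm_{m-1}$, and $\Phi_m\mid g^\pm_m$ from $z_m\notin E(K_{m-1})$. For the reverse divisibility $g^\pm_m\mid\Phi_m g^\pm_{m-1}$ you propose a corank count via $e_m=1$, whereas the paper argues directly from the norm relation $\Phi_m z^\pm_m=-z^\pm_{m-1}$, which gives $\Phi_m\mathcal{E}^\pm_m\subseteq\mathcal{E}^\pm_{m-1}$ and hence that $\Phi_m g^\pm_{m-1}$ annihilates $\mathcal{E}^\pm_m$; either route closes the argument once the first step is repaired.
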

\begin{proof}
By \cite[Proposition~4.5]{HLV},  $(\mathcal{E}^\pm_\infty)^\vee\cong \Lambda$. As $\phi_2$ is injective, we see that $(\mathcal{E}^\pm_m)^\vee$ is $\Lambda$-cyclic, which establishes the existence of $g^\pm_m$. Now assume that $m\ge m_0+1$ and that $z^\pm_m\neq z^\pm_{m-1}$.  As $\omega^\pm_{m-1}z^\pm_{m-1}=0$, we see that $g^\pm_{m-1}\mid \omega^\pm_{m-1}$. The property $z_m\notin E(K_{m-1})$ implies that $\Phi_m\mid g^\pm_m$. As a consequence, we obtain $\Phi_m\mid (g^\pm_m/g^\pm_{m-1})$. As $\mathcal{E}^\pm_m$ is generated by $z^\pm_m$ and $\Phi_mz_m^\pm=-z^\pm_{m-1}$, it follows that $\Phi_m=g^\pm_m/g^\pm_{m-1}$.
\end{proof}

\begin{remark}
In the proof of Theorem~\ref{thm:vatsal}, we have seen that the coranks of $\ker\psi_m$ are bounded. In particular, $\nabla\ker(\psi_m)^\vee$ is defined for $m\gg0$. Together with the injectivity of $\phi_1$, this allows us to calculate $\delta_m$ via $\nabla\ker(\psi_m)^\vee$ according to Remark~\ref{rk:kob}.
\end{remark}

For the rest of this section, we fix an integer $m_0$ satisfying the conclusion of Theorem~\ref{thm:vatsal} and another integer $m\ge m_0$. For such choice of $m$, we write $\phi_i$ for the three maps given in Lemma~\ref{lem:diagram}. In what follows, we study the cokernels of these maps via the plus and minus Heegner points defined in Definition~\ref{def:pmHeeg}. Throughout, we fix $\epsilon\in\{+,-\}$ so that $z_m^\epsilon=z_m$.

\begin{lemma}
\label{lemma:cokernels-of-phi}
There exists a commutative diagram 
\[\begin{tikzcd}
0\arrow[r] &\ker(\psi_{m-1})\cap \mathcal{E}^\epsilon_{m-1}\arrow[r]\arrow[d,"\phi'_1"]& \mathcal{E}^\epsilon_{m-1}\arrow[r]\arrow[d,"\phi'_2"]& \psi_{m-1}(\mathcal{E}^\epsilon_{m-1})\arrow[r]\arrow[d,"\phi'_3"]&0\\
0\arrow[r] &\ker(\psi_{m})\cap \mathcal{E}^\epsilon_{m}\arrow[r]& \mathcal{E}^\epsilon_{m}\arrow[r]& \psi_{m}(\mathcal{E}^\epsilon_{m})\arrow[r]&0
\end{tikzcd}\]
such that all vertical maps are injective with $\coker(\phi_i')\cong \coker(\phi_i)$ for $i=1,2,3$.
\end{lemma}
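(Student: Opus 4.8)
The plan is to construct the left and middle squares of the diagram first, and then deduce the right square and the statement about cokernels by a diagram chase. First I would observe that $\mathcal{E}^\epsilon_m \subseteq E(K_m)\otimes\Qp/\Zp$ and $\mathcal{E}^\epsilon_{m-1}\subseteq E(K_{m-1})\otimes\Qp/\Zp$ by construction (the points $z^\epsilon_m\otimes p^{-k}$ lie in the Mordell--Weil-tensored group, and the Kummer map realises $\mathcal{E}^\epsilon_m$ inside the Selmer group compatibly), and that the inclusion $E(K_{m-1})\hookrightarrow E(K_m)$ sends $\mathcal{E}^\epsilon_{m-1}$ into $\mathcal{E}^\epsilon_m$ since it sends $z^\epsilon_{m-1}$ to $z^\epsilon_{m-1}\in\mathcal{E}^\epsilon_m$ (by the norm relations and the definition of $z^\epsilon_m$, $z^\epsilon_{m-1}$ lies in the $\Lambda^\epsilon_m$-span of $z^\epsilon_m$, namely $\Phi_m z^\epsilon_m = -z^\epsilon_{m-1}$ up to sign when $z^\epsilon_m = z_m$). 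This gives the restriction $\phi'_2\colon \mathcal{E}^\epsilon_{m-1}\to\mathcal{E}^\epsilon_m$ of $\phi_2$, and hence the restriction $\phi'_1$ of $\phi_1$ to $\ker(\psi_{m-1})\cap\mathcal{E}^\epsilon_{m-1}$, landing in $\ker(\psi_m)\cap\mathcal{E}^\epsilon_m$; the horizontal rows are exact by definition of these intersections, and the map $\phi'_3$ on the images $\psi_\bullet(\mathcal{E}^\epsilon_\bullet)$ is then induced by $\phi_3$. Injectivity of all three vertical maps is inherited from the injectivity of $\phi_1,\phi_2,\phi_3$ in Lemma~\ref{lem:diagram}.

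The substance is the isomorphism $\coker(\phi'_i)\cong\coker(\phi_i)$. For $\phi'_2$ versus $\phi_2$, I would use Corollary~\ref{cor:annihilator polynomials}: $(\mathcal{E}^\epsilon_m)^\vee\cong\Lambda/g^\epsilon_m$ and $(\mathcal{E}^\epsilon_{m-1})^\vee\cong\Lambda/g^\epsilon_{m-1}$ with $g^\epsilon_m/g^\epsilon_{m-1}=\Phi_m$, so that $\coker(\phi'_2)^\vee\cong\ker\bigl(\Lambda/g^\epsilon_m\to\Lambda/g^\epsilon_{m-1}\bigr)\cong (g^\epsilon_{m-1})/(g^\epsilon_m)\cong \Lambda/\Phi_m$ as $\Lambda$-modules, i.e.\ $\coker(\phi'_2)$ is cofree of corank one over $\Lambda/\Phi_m\cong\Zp[\zeta_{p^m}]$. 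On the other side, $\coker(\phi_2)=\bigl(E(K_m)\otimes\Qp/\Zp\bigr)\big/\bigl(E(K_{m-1})\otimes\Qp/\Zp\bigr)$; since the Mordell--Weil ranks and the bounded torsion give that this cokernel is a cofinitely generated $\Zp$-module whose divisible part has corank $\operatorname{rank}E(K_m)-\operatorname{rank}E(K_{m-1})=\phi(p^m)$ by Theorem~\ref{thm:vatsal} (the $e_m=1$ statement), and whose $\Gamma$-module structure is governed by $\Phi_m$. The key point is that both cokernels, after dualising, are quotients of $\Lambda$ annihilated by $\Phi_m$, and a rank count plus the no-finite-submodule input forces them to agree; more precisely I would argue that the natural surjection $\coker(\phi'_2)\hookrightarrow\coker(\phi_2)$ (coming from $\mathcal{E}^\epsilon_\bullet\hookrightarrow E(K_\bullet)\otimes\Qp/\Zp$ compatibly) is an isomorphism because source and target are both $\Phi_m$-cotorsion of the same corank over $\Lambda/\Phi_m$ and the map is injective. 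Then the snake lemma applied to the two diagrams of Lemma~\ref{lem:diagram} and the present lemma, together with $\coker(\phi'_2)\cong\coker(\phi_2)$, will force $\coker(\phi'_1)\cong\coker(\phi_1)$ and $\coker(\phi'_3)\cong\coker(\phi_3)$ once we also know $\phi'_3$ and $\phi_3$ have the same cokernel, which I would get from the compatibility of the localisation maps $\psi_\bullet$ restricted to the Heegner modules.

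The main obstacle I anticipate is pinning down $\coker(\phi_2)$ precisely enough and producing the comparison map between $\coker(\phi'_2)$ and $\coker(\phi_2)$: a priori $E(K_m)\otimes\Qp/\Zp$ is much larger than $\mathcal{E}^\epsilon_m$, so it is not automatic that the two cokernels coincide — one really needs that the "new part" of the Mordell--Weil group in going from $K_{m-1}$ to $K_m$ is entirely captured by the Heegner module, which is exactly what Theorem~\ref{thm:vatsal} (via $e_m=1$ and $z_m\notin E(K_{m-1})$) is designed to supply. I would therefore formulate the comparison as: the inclusion $\mathcal{E}^\epsilon_m/\mathcal{E}^\epsilon_{m-1}\hookrightarrow \bigl(E(K_m)\otimes\Qp/\Zp\bigr)/\bigl(E(K_{m-1})\otimes\Qp/\Zp\bigr)$ is an isomorphism because both sides are cofinitely generated, are annihilated by $\Phi_m$, have the same $\Zp$-corank $\phi(p^m)$ by the rank computation, and the source has no proper cofinite-index submodule of full corank being cut out — so injectivity plus equality of coranks over the field $\operatorname{Frac}(\Lambda/\Phi_m)$ yields surjectivity after inverting $p$, and the absence of finite submodules (Corollary~\ref{cor:trivial}-type input, or directly the structure of $\mathcal{E}^\epsilon_m$) upgrades this to an honest isomorphism. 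Once that single comparison is in hand, everything else is a formal diagram chase.
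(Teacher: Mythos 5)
Your overall architecture matches the paper's: reduce everything to the single comparison $\coker(\phi'_2)\cong\coker(\phi_2)$, using $e_m=1$ and $z_m\notin E(K_{m-1})$ from Theorem~\ref{thm:vatsal} together with Corollary~\ref{cor:annihilator polynomials}, then propagate to $\phi'_1$ and $\phi'_3$ via the snake lemma. But the step you yourself flag as the "main obstacle" is exactly where your argument has a genuine gap, and the tools you offer do not close it. The natural map $\mathcal{E}^\epsilon_m/\mathcal{E}^\epsilon_{m-1}\to\coker(\phi_2)$ is \emph{not} a priori injective: its kernel is $\bigl(\mathcal{E}^\epsilon_m\cap(E(K_{m-1})\otimes\Qp/\Zp)\bigr)/\mathcal{E}^\epsilon_{m-1}$, and nothing in the definitions forces this intersection to equal $\mathcal{E}^\epsilon_{m-1}$. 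The corank count ($e_m=1$ together with $(\mathcal{E}^\epsilon_m/\mathcal{E}^\epsilon_{m-1})^\vee\cong\Lambda/\Phi_m$) gives a \emph{surjection} between divisible groups of the same corank $\phi(p^m)$, hence a finite kernel --- but a surjection of divisible groups of equal corank can perfectly well have nontrivial finite kernel (multiplication by $p$ on $\Qp/\Zp$), and $\mathcal{E}^\epsilon_m/\mathcal{E}^\epsilon_{m-1}\cong(\Qp/\Zp)^{\phi(p^m)}$ has plenty of finite $\Lambda$-submodules, so the "absence of finite submodules" you invoke is simply not true of the relevant module. The paper closes this by an explicit construction: letting $p^v$ be the exponent of $E(K_m)\otimes\Zp/(\Lambda z_m^\epsilon+E(K_{m-1})\otimes\Zp)$, it builds a well-defined map $\alpha''\colon\coker(\phi_2)\to\mathcal{E}^\epsilon_m/\mathcal{E}^\epsilon_{m-1}$ satisfying $\alpha'\circ\alpha''=\mathrm{id}$; then $\coker(\alpha'')$ is a finite quotient of the divisible group $\mathcal{E}^\epsilon_m/\mathcal{E}^\epsilon_{m-1}$, hence zero, so $\alpha''$ and therefore $\alpha'$ are isomorphisms. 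Some such section, or an independent proof that $\mathcal{E}^\epsilon_m\cap(E(K_{m-1})\otimes\Qp/\Zp)=\mathcal{E}^\epsilon_{m-1}$, is indispensable and is missing from your proposal.

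A second, smaller gap: you dispose of $\coker(\phi'_3)\cong\coker(\phi_3)$ by "compatibility of the localisation maps," but this comparison has exactly the same structure as the global one and needs its own argument. The paper identifies $\psi_m(\coker(\phi'_2))$ with $(\Lambda d_m^\epsilon\otimes\Qp/\Zp)/(\Lambda d_{m-1}^\epsilon\otimes\Qp/\Zp)$ using the local points $d_m^\epsilon$ of \S\ref{Sec:pm} and repeats the $\alpha''$-type argument locally; only after both the middle and right cokernels are matched does the snake lemma deliver $\coker(\phi'_1)\cong\coker(\phi_1)$.
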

\begin{proof}
The commutativity of the diagram and the injectivity are obvious from the definitions.
Recall that $m\ge m_0$ is chosen such that $z_m^\epsilon\notin E(K_{m-1})$. Therefore, the point $z^\epsilon_m$ induces a rank jump and we have
\[\corank((E(K_{m-1})+\Lambda_m z^\epsilon_m)\otimes \Q_p/\Z_p)-\corank(E(K_{m-1})\otimes \Q_p/\Z_p)=\phi(p^{m}).\]
On the other hand, the property $e_m=1$ implies that 
\[\corank(E(K_m)\otimes \Q_p/\Z_p)-\corank(E(K_{m-1})\otimes \Q_p/\Z_p)=\phi(p^{m}).\]
Thus, we obtain a natural surjection
\[\alpha\colon\mathcal{E}^\epsilon_m\to \coker(\phi_2).\]

Note that $\coker(\phi_2)$ is annihilated by the cyclotomic polynomial  $\Phi_{m}$ and is of $\Zp$-corank $\phi(p^m)$. Thus, the kernel of $\alpha$ contains all elements of the form $\Phi_{m}z_m\otimes p^{-l}$ where $l\ge0$ is an integer. Recall from  Definition~\ref{def:pmHeeg} that $$\Phi_{m}z^\epsilon_m=-z^\epsilon_{m-1}$$ showing that $\mathcal{E}^\epsilon_{m-1}$ lies in the kernel of $\alpha$. 

We obtain a surjective map
\[\alpha'\colon \mathcal{E}^\epsilon_m/\mathcal{E}^\epsilon_{m-1}\longrightarrow \coker(\phi_2)\]
with finite kernel.
Let $p^v$ be the exponent of \[E(K_m)\otimes \Z_p/(\Lambda z_m^\epsilon+E(K_{m-1})\otimes \Z_p).\] Let $x\otimes p^{-l}+(E(K_{m-1})\otimes \Q_p/\Z_p)\in \coker(\phi_2)$. Let $y\in \Lambda z^\epsilon_m$ be such that $p^vx\equiv y\mod E(K_{m-1})\otimes \Z_p$. Note that $y$ is unique modulo $\Lambda z^\epsilon_m\cap(E(K_{m-1})\otimes \Z_p)=\Lambda z^\epsilon_{m-1}$. Clearly,
\[y\otimes p^{-l-v}\equiv x\otimes p^{-l}\mod E(K_{m-1})\otimes \Q_p/\Z_p.\]
Let now $x'\otimes p^{-l'}$ be another representatve of $x\otimes p^{-l}+(E(K_{m-1})\otimes \Q_p/\Z_p)$. We can assume that $l=l'$ and $x'=x+w+p^l z$ for some $w\in E(K_{m-1})$. As $x'\otimes p^{-l}=(x'+p^lz)\otimes p^{-l}$ we can even assume that $z=0$.  Let $y'\in \Lambda z^\epsilon_m$ be such that $p^vx'\equiv y'\mod E(K_{m-1})\otimes \Z_p$. Then
\[y'-y\equiv p^v w\equiv 0\mod  E(K_{m-1})\otimes \Z_p.\]
In particular, $y-y'\in \Lambda z^\epsilon_m\cap(E(K_{m-1})\otimes \Z_p)=\Lambda z^\epsilon_{m-1}$.

We therefore obtain a well defined map
\[\alpha''\colon \coker(\phi_2)\to \mathcal{E}^\epsilon_{m}/\mathcal{E}^\epsilon_m.\]
By construction \[\alpha'(\alpha''(x\otimes p^{-l}+(E(K_{m-1})\otimes \Q_p/\Z_p))=(y\otimes p^{-l-v}+E(K_{m-1})\otimes \Q_p/\Z_p).\]
Hence, $\alpha'\circ \alpha''$ is equal to the identity.
As $\alpha'$ has a finite kernel, $\alpha''$ has a finite cokernel. But $\mathcal{E}^\epsilon_m/\mathcal{E}^\epsilon_{m-1}$ is divisible. Thus, $\alpha''$ is in fact an isomorphism and we obtain
\[\coker(\phi'_2)=\mathcal{E}^\epsilon_m/\mathcal{E}^\epsilon_{m-1}\cong \coker(\phi_2).\]

The snake lemma gives us two exact sequences
\[0\to \coker(\phi_1)\to \coker(\phi_2)\to \coker(\phi_3)\to 0\]
and 
\[0\to \coker(\phi'_1)\to \coker(\phi'_2)\to \coker(\phi'_3)\to 0,\]
which fit into the following commutative diagram \[\begin{tikzcd}
0\arrow[r] &\coker(\phi_1')\arrow[r]\arrow[d]& \coker(\phi_2')\arrow[r,"\psi_m"]\arrow[d,"\alpha'"]& \coker(\phi_3')\arrow[r]\arrow[d]&0\\
0\arrow[r] &\coker(\phi_1)\arrow[r]&\coker(\phi_2)\arrow[r,"\psi_m"]& \coker(\phi_3)\arrow[r]&0.
\end{tikzcd}\]

We have already shown that $\coker(\phi'_2)\cong \coker(\phi_2)$ is generated by $z^\epsilon_m$ and annihilated by $\Phi_m$. Clearly,
\[\psi_m(\mathcal{E}^\epsilon_{m-1})\subset \widehat{E}^\epsilon(k_{m-1})\otimes \Q_p/\Z_p\]
Using the same argument as for the injectivity of $\alpha'$, we obtain  \[\psi_m(\coker(\phi'_2))\cong(\Lambda d_m^\epsilon\otimes \Q_p/\Z_p)/(\Lambda d^\epsilon_{m-1}\otimes \Q_p/\Z_p),\]
where $d_m^\epsilon$ and $d_{m-1}^\epsilon$ are defined as in \S\ref{Sec:pm}.

Lemma \ref{lem:diagram} gives
\[\psi_m\left(\frac{E(K_m)\otimes \Q_p/\Z_p}{E(K_{m-1})\otimes \Q_p/\Z_p}\right)\cong\frac{\widehat{E}(k_m)\otimes \Q_p/\Z_p}{\widehat{E}(k_{m-1})\otimes \Q_p/\Z_p}.\]
The modules on the right hand sides of the last two displayed isomorphisms are canonically isomorphic yielding
\[\coker(\phi'_3)=\psi_m(\coker(\phi'_2))\cong \psi_m(\coker(\phi_2))=\coker(\phi_3).\]
Finally, we can deduce  the isomorphism $\coker(\phi'_1)\cong \coker(\phi_1)$ from the snake lemma.
\end{proof}

In the next two lemmas, we show that it suffices to work over $K_\infty$.
\begin{lemma}
\label{lemma:heegner-isomor}
Let $g_m^\epsilon$ be the characteristic element defined as in Corollary~\ref{cor:annihilator polynomials}. Then, there is a natural isomorphism of $\Lambda$-modules
 \[r_m\colon \mathcal{E}^\epsilon_m/\mathcal{E}^\epsilon_{m-1}\to \mathcal{E}^\epsilon_\infty[g^\epsilon_m]/\mathcal{E}^\epsilon_\infty[g^\epsilon_{m-1}].\]
\end{lemma}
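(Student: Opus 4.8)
The plan is to realize both $\mathcal{E}^\epsilon_m$ and $\mathcal{E}^\epsilon_{m-1}$ as torsion submodules of $\mathcal{E}^\epsilon_\infty$ and then pass to quotients. The key input is the identification $(\mathcal{E}^\epsilon_\infty)^\vee\cong\Lambda$ from \cite[Proposition~4.5]{HLV} (already used in the proof of Corollary~\ref{cor:annihilator polynomials}): dualizing, $\mathcal{E}^\epsilon_\infty$ is isomorphic as a $\Lambda$-module to $\Lambda^\vee$, so that for every nonzero $f\in\Lambda$ the $f$-torsion submodule $\mathcal{E}^\epsilon_\infty[f]$ has Pontryagin dual $(\mathcal{E}^\epsilon_\infty)^\vee/f(\mathcal{E}^\epsilon_\infty)^\vee\cong\Lambda/f$.

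First I would show that the canonical map $\mathcal{E}^\epsilon_m\to\mathcal{E}^\epsilon_\infty$ coming from the direct limit identifies $\mathcal{E}^\epsilon_m$ with $\mathcal{E}^\epsilon_\infty[g^\epsilon_m]$. This map is injective, since the transition maps in the system $(\mathcal{E}^\epsilon_n)_n$ are injective ($E(K_n)$ having no nontrivial $p$-torsion, cf. \cite[Proposition~4.3]{longovigni} and the proof of Corollary~\ref{cor:annihilator polynomials}). Its image is annihilated by $g^\epsilon_m$, because $(\mathcal{E}^\epsilon_m)^\vee\cong\Lambda/g^\epsilon_m$ forces $g^\epsilon_m\mathcal{E}^\epsilon_m=0$ (here $g^\epsilon_m\neq 0$, as $\mathcal{E}^\epsilon_m$ is a module over $\Lambda/\omega^\epsilon_m$), so the map factors through an injection $\mathcal{E}^\epsilon_m\hookrightarrow\mathcal{E}^\epsilon_\infty[g^\epsilon_m]$. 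Dualizing yields a surjection of $\Lambda$-modules $\Lambda/g^\epsilon_m\twoheadrightarrow(\mathcal{E}^\epsilon_m)^\vee\cong\Lambda/g^\epsilon_m$; since a surjective endomorphism of a Noetherian module is an isomorphism (equivalently, a surjective $\Lambda/g^\epsilon_m$-linear self-map of $\Lambda/g^\epsilon_m$ must send $1$ to a unit), we conclude $\mathcal{E}^\epsilon_m\cong\mathcal{E}^\epsilon_\infty[g^\epsilon_m]$, and similarly $\mathcal{E}^\epsilon_{m-1}\cong\mathcal{E}^\epsilon_\infty[g^\epsilon_{m-1}]$.

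To finish, I would observe that these two isomorphisms are compatible with the inclusions $\mathcal{E}^\epsilon_{m-1}\hookrightarrow\mathcal{E}^\epsilon_m$ and $\mathcal{E}^\epsilon_\infty[g^\epsilon_{m-1}]\hookrightarrow\mathcal{E}^\epsilon_\infty[g^\epsilon_m]$, since all four maps involved are the canonical ones into $\mathcal{E}^\epsilon_\infty$; here the second inclusion makes sense because $g^\epsilon_{m-1}\mid g^\epsilon_m$, indeed $g^\epsilon_m=\Phi_m\,g^\epsilon_{m-1}$ by Corollary~\ref{cor:annihilator polynomials} (recall $z^\epsilon_m=z_m\notin E(K_{m-1})$ by Theorem~\ref{thm:vatsal}, whereas $z^\epsilon_{m-1}=z_{m-2}\in E(K_{m-1})$, so $z^\epsilon_m\neq z^\epsilon_{m-1}$). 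Passing to the induced map on quotients gives the sought-after natural $\Lambda$-isomorphism $r_m\colon\mathcal{E}^\epsilon_m/\mathcal{E}^\epsilon_{m-1}\longrightarrow\mathcal{E}^\epsilon_\infty[g^\epsilon_m]/\mathcal{E}^\epsilon_\infty[g^\epsilon_{m-1}]$. The only step requiring any care --- rather than a genuine obstacle --- is the passage from ``injective with isomorphic Pontryagin dual'' to ``isomorphism,'' which is exactly the Noetherian surjective-endomorphism observation above (alternatively, one compares $0$-th Fitting ideals, which are isomorphism invariants and equal $(g^\epsilon_m)$ on both sides); everything else is formal Pontryagin duality together with the cyclicity of $\mathcal{E}^\epsilon_m$ established in Corollary~\ref{cor:annihilator polynomials}.
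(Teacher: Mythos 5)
Your proof is correct, and it reaches the conclusion by a route that is organized differently from the paper's, although both arguments rest on the same two inputs: $(\mathcal{E}^\epsilon_\infty)^\vee\cong\Lambda$ and the cyclicity $(\mathcal{E}^\epsilon_m)^\vee\cong\Lambda/g^\epsilon_m$ from Corollary~\ref{cor:annihilator polynomials}. You prove the stronger intermediate statement that the canonical map $\mathcal{E}^\epsilon_m\to\mathcal{E}^\epsilon_\infty$ is an isomorphism onto the full torsion submodule $\mathcal{E}^\epsilon_\infty[g^\epsilon_m]$ (and likewise at level $m-1$), so that $r_m$ is literally the identity on quotients; the key step is the observation that the dualized surjection is, after composing with an abstract isomorphism, a surjective endomorphism of the Noetherian module $\Lambda/g^\epsilon_m$ and hence injective. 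The paper instead analyzes $r_m$ on the quotients directly: surjectivity comes from dualizing to a nonzero map $\Lambda/\Phi_m\to(\mathcal{E}^\epsilon_m/\mathcal{E}^\epsilon_{m-1})^\vee$ whose target is $\Zp$-free and killed by $\Phi_m$, forcing injectivity of the dual; and injectivity of $r_m$ comes from identifying $\ker r_m$ with $\mathcal{E}^\epsilon_m[g^\epsilon_{m-1}]/\mathcal{E}^\epsilon_{m-1}$, which is finite by Corollary~\ref{cor:annihilator polynomials} and trivial because $(\mathcal{E}^\epsilon_m[g^\epsilon_{m-1}])^\vee\cong\Lambda/g^\epsilon_{m-1}$ is $\Zp$-free, so the numerator is divisible. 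Your version buys a cleaner global picture (each finite-level module is exactly the $g^\epsilon_m$-torsion of the limit, which also makes the naturality and the compatibility with the inclusions transparent), whereas the paper's version isolates exactly where the hypothesis $z_m\notin E(K_{m-1})$ from Theorem~\ref{thm:vatsal} enters. All the facts you invoke (injectivity of the transition maps, $g^\epsilon_{m-1}\mid g^\epsilon_m$, $g^\epsilon_m\neq0$) are available at this point in the paper, so there is no gap.
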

\begin{proof}
As $z_m\notin E(K_{m-1})$, we have that $\mathcal{E}^\epsilon_m/\mathcal{E}^\epsilon_{m-1}$ is a divisible $\Zp$-module of corank $\phi(p^m)$ annihilated by $\Phi_m$. 

We recall from  \cite[Proposition~4.5]{HLV} that  $(\mathcal{E}^\epsilon_\infty)^\vee\cong \Lambda$.  Taking duals we obtain a homomorphism
\[r_m^\vee \colon \Lambda/\Phi_m\longrightarrow (\mathcal{E}^\epsilon_m/\mathcal{E}^\epsilon_{m-1})^\vee.\]
As $z_m\notin E(K_{m-1})$, the right hand side is annihilated by $\Phi_m$, has $\Z_p$-rank $\phi(p^m)$ and is $\Z_p$-free. Therefore, $r^\vee_m$ is either trivial or injective. As $r_m$ is non-trivial, the same holds for $r^\vee_m$. 
It follows that $r_m$ is surjective.

Note that the kernel of $r_m$ is given by $\mathcal{E}_m^\epsilon[g^\epsilon_{m-1}]/\mathcal{E}^\epsilon_{m-1}$. By Corollary \ref{cor:annihilator polynomials}, this quotient is finite. To show that it is trivial, it suffices to show that $\mathcal{E}^\epsilon_m[g^\epsilon_{m-1}]$ is $\Zp$-divisible. Clearly, $$(\mathcal{E}^\epsilon_m[g_{m-1}^\epsilon])^\vee \cong \Lambda/(g^\epsilon_m,g^\epsilon_{m-1})=\Lambda/g^\epsilon_{m-1}.$$
By Corollary~\ref{cor:annihilator polynomials}, the latter is $\Z_p$-free giving the desired divisibility of $\mathcal{E}_m^\epsilon[g^\epsilon_{m-1}]$.
\end{proof}
\begin{remark}
Note that it is crucial here to use the polynomials $g^\epsilon_m$ instead of $\omega^\epsilon_m$. If the latter were used,  the resulting map $r_m$ would  still be surjective. But in the case $g_{m-1}^\epsilon\neq \omega^\epsilon_{m-1}$, the $\Zp$-module $\mathcal{E}^\epsilon_{m-1}[\omega^\epsilon_{m-1}]$ would not be divisible, forcing the quotient $\mathcal{E}^\epsilon_m[\omega^\epsilon_m]/\mathcal{E}^\epsilon_{m-1}$ to be non-trivial.
\end{remark}
\begin{lemma}
\label{lem:diagram-at-infinity}Consider the following commutative diagram:
\[\begin{tikzcd}[scale cd =0.9]
0\arrow[r] &\ker(\psi_\infty)\cap \mathcal{E}^\epsilon_\infty[g^\epsilon_{m-1}]\arrow[r]\arrow[d,"\phi''_1"]& \mathcal{E}^\epsilon_\infty[g^\epsilon_{m-1}]\arrow[r]\arrow[d,"\phi''_2"]& \psi_\infty(\mathcal{E}^\epsilon_\infty[g^\epsilon_{m-1}])\arrow[r]\arrow[d,"\phi''_3"]&0\\
0\arrow[r] &\ker(\psi_{\infty})\cap \mathcal{E}^\epsilon_{\infty}[g^\epsilon_{m}]\arrow[r]& \mathcal{E}^\epsilon_{\infty}[g^\epsilon_{m}]\arrow[r]& \psi_{\infty}(\mathcal{E}^\epsilon_{\infty}[g^\epsilon_{m}])\arrow[r]&0.
\end{tikzcd}\]
All vertical maps are injective and $\coker(\phi''_i)\cong \coker(\phi'_i)$ for $i=1,2,3$.
\end{lemma}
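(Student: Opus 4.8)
The plan is to reduce the claim to Lemma~\ref{lemma:cokernels-of-phi} by checking that passing from level $K_m$ to the full tower $K_\infty$ changes none of the groups involved: for $j\in\{m-1,m\}$ one has $\mathcal{E}^\epsilon_j=\mathcal{E}^\epsilon_\infty[g^\epsilon_j]$ inside $\mathcal{E}^\epsilon_\infty$, and the localization map $\psi_j$ is the restriction of $\psi_\infty$. Once this is in place, the displayed diagram becomes, row by row, the lower diagram of Lemma~\ref{lemma:cokernels-of-phi} (the one involving the maps $\phi'_i$), and the conclusion follows at once.

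I would first establish the identification $\mathcal{E}^\epsilon_j=\mathcal{E}^\epsilon_\infty[g^\epsilon_j]$ for $j\in\{m-1,m\}$. The structure map $\mathcal{E}^\epsilon_j\to\mathcal{E}^\epsilon_\infty=\varinjlim_n\mathcal{E}^\epsilon_n$ is injective, being the restriction of $E(K_j)\otimes\Qp/\Zp\hookrightarrow E(K_\infty)\otimes\Qp/\Zp$ (recall $E(K_\infty)$ has no non-trivial $p$-torsion by \cite[Proposition~8.7]{kobayashi03}). Since $g^\epsilon_j$ annihilates $\mathcal{E}^\epsilon_j$ by Corollary~\ref{cor:annihilator polynomials}, its image lies in $\mathcal{E}^\epsilon_\infty[g^\epsilon_j]$, giving an inclusion $\mathcal{E}^\epsilon_j\hookrightarrow\mathcal{E}^\epsilon_\infty[g^\epsilon_j]$. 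To see it is an equality, I would dualize: from $(\mathcal{E}^\epsilon_\infty)^\vee\cong\Lambda$ (\cite[Proposition~4.5]{HLV}) we obtain $(\mathcal{E}^\epsilon_\infty[g^\epsilon_j])^\vee\cong\Lambda/g^\epsilon_j$, which is isomorphic to $(\mathcal{E}^\epsilon_j)^\vee$ by Corollary~\ref{cor:annihilator polynomials}; the Pontryagin dual of the inclusion is thus a surjection between two Noetherian $\Lambda$-modules each abstractly isomorphic to $\Lambda/g^\epsilon_j$, hence an isomorphism, and therefore so is the inclusion itself.

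Next I would record that $\psi_j$ is compatible with $\psi_\infty=\varinjlim_n\psi_n\colon E(K_\infty)\otimes\Qp/\Zp\to E(k_\infty)\otimes\Qp/\Zp$. Because localization commutes with base change, for $j\in\{m-1,m\}$ there is a commutative square with horizontal maps $\psi_j$ and $\psi_\infty$ whose vertical arrows $E(K_j)\otimes\Qp/\Zp\hookrightarrow E(K_\infty)\otimes\Qp/\Zp$ and $E(k_j)\otimes\Qp/\Zp\hookrightarrow E(k_\infty)\otimes\Qp/\Zp$ are injective (again since $E$ acquires no $p$-torsion over the tower). Hence for any subgroup $X\subseteq E(K_j)\otimes\Qp/\Zp$ one has $\ker(\psi_\infty|_X)=\ker(\psi_j|_X)$ and $\psi_\infty(X)$ is canonically isomorphic to $\psi_j(X)$. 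Taking $X=\mathcal{E}^\epsilon_j=\mathcal{E}^\epsilon_\infty[g^\epsilon_j]$ (by the previous step), for $j=m-1$ and $j=m$, identifies all three columns of the two short exact sequences in the displayed diagram with the corresponding columns of Lemma~\ref{lemma:cokernels-of-phi}; in particular $\phi''_2$ is the inclusion $\mathcal{E}^\epsilon_\infty[g^\epsilon_{m-1}]\hookrightarrow\mathcal{E}^\epsilon_\infty[g^\epsilon_m]$, which matches $\phi'_2\colon\mathcal{E}^\epsilon_{m-1}\hookrightarrow\mathcal{E}^\epsilon_m$, and the outer verticals are the induced maps. Consequently each $\phi''_i$ is injective and $\coker(\phi''_i)\cong\coker(\phi'_i)$.

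The only point requiring a little care is the first step — namely verifying that the natural inclusion $\mathcal{E}^\epsilon_j\hookrightarrow\mathcal{E}^\epsilon_\infty[g^\epsilon_j]$ is an \emph{equality}, and not merely that both sides are abstractly isomorphic — and this is precisely what the Pontryagin-duality argument above provides. Everything else is bookkeeping with the direct-limit constructions of $\mathcal{E}^\epsilon_\infty$ and $\psi_\infty$.
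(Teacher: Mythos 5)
Your argument is correct, and it takes a slightly different --- in fact stronger --- route than the paper. The paper's proof only identifies the successive quotients: it invokes Lemma~\ref{lemma:heegner-isomor}, which gives $\coker(\phi''_2)=\mathcal{E}^\epsilon_\infty[g^\epsilon_m]/\mathcal{E}^\epsilon_\infty[g^\epsilon_{m-1}]\cong\mathcal{E}^\epsilon_m/\mathcal{E}^\epsilon_{m-1}=\coker(\phi'_2)$, notes that $\psi_\infty$ restricted to $E(K_m)\otimes\Qp/\Zp$ coincides with $\psi_m$, and then re-runs the snake-lemma comparison of Lemma~\ref{lemma:cokernels-of-phi} to treat $\phi''_1$ and $\phi''_3$. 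You instead prove the termwise equality $\mathcal{E}^\epsilon_j=\mathcal{E}^\epsilon_\infty[g^\epsilon_j]$ for $j=m-1,m$, after which the two diagrams literally coincide and no further argument is needed. Your duality step is sound: the Pontryagin dual of the inclusion is a surjection of finitely generated $\Lambda$-modules each isomorphic to $\Lambda/g^\epsilon_j$ (using $(\mathcal{E}^\epsilon_\infty)^\vee\cong\Lambda$, which forces $(\mathcal{E}^\epsilon_\infty[g^\epsilon_j])^\vee\cong\Lambda/g^\epsilon_j$, together with Corollary~\ref{cor:annihilator polynomials}), and such a surjection is automatically an isomorphism because finitely generated modules over the Noetherian ring $\Lambda$ are Hopfian. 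It is worth noting that your identification subsumes Lemma~\ref{lemma:heegner-isomor}, whose proof in the paper requires a separate divisibility argument to show that the finite kernel $\mathcal{E}^\epsilon_m[g^\epsilon_{m-1}]/\mathcal{E}^\epsilon_{m-1}$ vanishes; your version obtains this for free, so it is a genuine (if modest) simplification. The remaining bookkeeping --- that the inclusions $E(K_j)\otimes\Qp/\Zp\hookrightarrow E(K_\infty)\otimes\Qp/\Zp$ and $E(k_j)\otimes\Qp/\Zp\hookrightarrow E(k_\infty)\otimes\Qp/\Zp$ are injective and intertwine $\psi_j$ with $\psi_\infty$, so that kernels and images restrict correctly --- matches what the paper does implicitly.
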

\begin{proof}
The injectivity of the vertical maps is immediate, as before. By Lemma~\ref{lemma:heegner-isomor},
\[\coker(\phi''_2)\cong \coker(\phi'_2)\]
Note that the restriction of $\psi_\infty$ to $E(K_m)\otimes \Q_p/\Z_p$ coincides with $\psi_m$. The rest of the proof is analogous to the one of Lemma~\ref{lemma:cokernels-of-phi}.
\end{proof}

The lemmas above allow us to reduce the study of $\nabla\ker(\psi_m)^\vee$ to computing the image of $\ker( \psi_\infty)\cap \mathcal{E}^\epsilon_\infty[g^\epsilon_m]$ in $\mathcal{E}^\epsilon_\infty[g^\epsilon_m]/\mathcal{E}^\epsilon_\infty[g^\epsilon_{m-1}]$. 

As stated in the proof of Lemma~\ref{lemma:heegner-isomor},   $(\mathcal{E}^\epsilon_\infty)^\vee\cong \Lambda$. This results in the short exact sequence
\[0\to \Lambda\to \Lambda\to (\ker(\psi_\infty)\cap \mathcal{E}^\epsilon_\infty)^\vee\to 0,\] from which we deduce that
 \[(\ker( \psi_\infty)\cap \mathcal{E}^\epsilon_\infty)^\vee\cong \Lambda/f^\epsilon\] for some power series $f^\epsilon\in \Lambda$.

Choose an index $m'_0$ such that $\gcd(f^\epsilon,g^\epsilon_m)=\gcd(f^\epsilon,g^\epsilon_{m'_0})$ for all $m\ge m'_0$. For the rest of the section we will assume that $m\ge \max(m_0,m'_0)$, where $m_0$ is defined as in Theorem~\ref{thm:vatsal}.

 \begin{lemma}
 \label{lem:cokernel-phi''}
 Let $\lambda^\epsilon=\deg(f^\epsilon/\gcd(f^\epsilon,g^\epsilon_{m'_0}))$ and $\mu^\epsilon=\mu(f^\epsilon)$. Then we have
 \[\ord_p\left(\left|\coker(\phi''_1)\right|\right)=\lambda^\epsilon+\mu^\epsilon \phi(p^{m})\]
 for $m \gg 0$.
 \end{lemma}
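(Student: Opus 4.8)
The plan is to dualize the diagram of Lemma~\ref{lem:diagram-at-infinity} and reduce the computation of $|\coker(\phi''_1)|$ to the cardinality of an explicit cyclic quotient of $\Lambda$. Set $D_\infty=\ker(\psi_\infty)\cap\mathcal{E}^\epsilon_\infty$, so that $D_\infty^\vee\cong\Lambda/f^\epsilon$ by the definition of $f^\epsilon$. Since $g^\epsilon_{m-1}\mid g^\epsilon_m$, the map $\phi''_1$ is just the inclusion $D_\infty[g^\epsilon_{m-1}]\hookrightarrow D_\infty[g^\epsilon_m]$, so that $\coker(\phi''_1)=D_\infty[g^\epsilon_m]/D_\infty[g^\epsilon_{m-1}]$. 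Pontryagin duality is exact and carries the $x$-torsion submodule of a discrete module to the quotient of its dual by $x$; applying it to $0\to D_\infty[g^\epsilon_{m-1}]\to D_\infty[g^\epsilon_m]\to\coker(\phi''_1)\to 0$ yields
\[\coker(\phi''_1)^\vee\;\cong\;\ker\!\big(\Lambda/(f^\epsilon,g^\epsilon_m)\twoheadrightarrow\Lambda/(f^\epsilon,g^\epsilon_{m-1})\big)\;=\;(f^\epsilon,g^\epsilon_{m-1})/(f^\epsilon,g^\epsilon_m).\]

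Next I would make this ideal quotient explicit. For $m\gg 0$ one has $\Phi_m\nmid f^\epsilon$, since a nonzero element of $\Lambda$ has only finitely many irreducible factors; together with the stabilization $\gcd(f^\epsilon,g^\epsilon_m)=\gcd(f^\epsilon,g^\epsilon_{m'_0})$ (the defining property of $m'_0$) and $\Phi_m\mid g^\epsilon_m$, this forces $\gcd(f^\epsilon,g^\epsilon_{m-1})=\gcd(f^\epsilon,g^\epsilon_m)=:d^\epsilon$. Write $f^\epsilon=d^\epsilon f'^\epsilon$ and $g^\epsilon_{m-1}=d^\epsilon g''$, so $\gcd(f'^\epsilon,g'')=1$; note also $g^\epsilon_m=\Phi_m g^\epsilon_{m-1}$ by Corollary~\ref{cor:annihilator polynomials}, which applies because for the fixed sign $\epsilon$ with $z_m^\epsilon=z_m$ one has $z_m^\epsilon=z_m\neq z_{m-2}=z_{m-1}^\epsilon$ thanks to $z_m\notin E(K_{m-1})$ from Theorem~\ref{thm:vatsal}. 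Dividing through by the nonzerodivisor $d^\epsilon$ identifies $(f^\epsilon,g^\epsilon_{m-1})/(f^\epsilon,g^\epsilon_m)$ with $(f'^\epsilon,g'')/(f'^\epsilon,\Phi_m g'')$, and the surjection $\Lambda\to(f'^\epsilon,g'')/(f'^\epsilon,\Phi_m g'')$, $a\mapsto g''a$, has kernel exactly $(\Phi_m,f'^\epsilon)$: here it is essential that $\Lambda=\Zp\lb T\rb$ is a unique factorization domain, which lets one pass from $g''\mid b f'^\epsilon$ together with $\gcd(g'',f'^\epsilon)=1$ to $g''\mid b$. Hence $\coker(\phi''_1)^\vee\cong\Lambda/(\Phi_m,f'^\epsilon)$ with $f'^\epsilon=f^\epsilon/\gcd(f^\epsilon,g^\epsilon_{m'_0})$.

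Finally I would compute $\ord_p|\Lambda/(\Phi_m,f'^\epsilon)|$. The reduction of $g^\epsilon_{m'_0}$ modulo $p$ is nonzero (it is a product of distinguished polynomials), hence $\mu(d^\epsilon)=0$ and $\mu(f'^\epsilon)=\mu(f^\epsilon)=\mu^\epsilon$, while the $\lambda$-invariant of $f'^\epsilon$ is $\lambda(f^\epsilon)-\lambda(d^\epsilon)=\deg(f^\epsilon/\gcd(f^\epsilon,g^\epsilon_{m'_0}))=\lambda^\epsilon$. For $m\gg0$ we have $\gcd(\Phi_m,f'^\epsilon)=1$ (again since $\Phi_m\nmid f^\epsilon$), so $\Lambda/(\Phi_m,f'^\epsilon)$ is finite, and the classical formula $\ord_p|\Lambda/(\Phi_m,h)|=\mu(h)\phi(p^m)+\lambda(h)$ for $m\gg0$ — which follows from Lemma~\ref{lem:kob-rank} applied to the torsion module $\Lambda/h$, combined with the telescoping identity $\ord_p|\Lambda/(\omega_m,h)|-\ord_p|\Lambda/(\omega_{m-1},h)|=\ord_p|\Lambda/(\Phi_m,h)|$ (valid for $m\gg0$) — gives $\ord_p|\Lambda/(\Phi_m,f'^\epsilon)|=\mu^\epsilon\phi(p^m)+\lambda^\epsilon$. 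Since $|\coker(\phi''_1)|=|\coker(\phi''_1)^\vee|$, this is the asserted formula.

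The step I expect to be the main obstacle is obtaining the \emph{exact} identification $\coker(\phi''_1)^\vee\cong\Lambda/(\Phi_m,f'^\epsilon)$, rather than merely an equality up to a bounded error: because $\Lambda$ is not a principal ideal domain, the naive ``cancel the greatest common divisor'' manipulation of the ideals $(f^\epsilon,g^\epsilon_m)$ is not automatic, and it is the unique factorization property of $\Lambda$ that pins the kernel of $a\mapsto g''a$ down on the nose. A secondary, purely bookkeeping point is to assemble the finitely many lower bounds on $m$ that are needed (stabilization of the gcd, $\Phi_m\nmid f^\epsilon$, $z_m\notin E(K_{m-1})$, and finiteness of $\Lambda/(\Phi_m,f'^\epsilon)$) into a single ``$m\gg0$''.
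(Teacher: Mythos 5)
Your argument is correct and follows essentially the same route as the paper: both dualize $\phi''_1$ using $(\ker(\psi_\infty)\cap\mathcal{E}^\epsilon_\infty)^\vee\cong\Lambda/f^\epsilon$ so as to reduce the computation to the kernel of the natural surjection $\Lambda/(f^\epsilon,g^\epsilon_m)\to\Lambda/(f^\epsilon,g^\epsilon_{m-1})$. The only difference is that you spell out the ``standard argument'' in full --- identifying this kernel with $\Lambda/(\Phi_m,\,f^\epsilon/\gcd(f^\epsilon,g^\epsilon_{m'_0}))$ and then applying the classical norm computation --- whereas the paper delegates that step to Washington, \S 13.
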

 \begin{proof}
 There is a natural map\[\theta \colon \ker(\psi_\infty)\cap \mathcal{E}^\epsilon_\infty[g^\epsilon_m]\to \coker(\phi''_2).\]
 Note that $\coker(\phi''_1)\cong \textup{Im}(\theta)$. We have a cononical isomorphism $\textup{Im}(\theta)^\vee\cong\theta^\vee(\coker(\phi''_2)^\vee)$, where
 \[\theta^\vee \colon \coker(\phi''_2)^\vee\to (\ker(\psi_\infty)\cap \mathcal{E}^\epsilon_\infty[g^\epsilon_m])^\vee\] is the dual of $\theta$.
 
 Recall that $(\mathcal{E}^\epsilon_\infty)^\vee \cong \Lambda$. Therefore \[\coker(\phi''_2)^\vee \cong g^\epsilon_{m-1}\Lambda/g^\epsilon_m\Lambda.\]
 By the definition of $f^\epsilon$, we have
 \[\theta^\vee \colon g^\epsilon_{m-1}\Lambda/g^\epsilon_m\Lambda\to \Lambda/(f^\epsilon,g^\epsilon_m).\]
 We now define \[\theta'\colon \Lambda/(f^\epsilon,g^\epsilon_m)\to \Lambda/(f^\epsilon, g^\epsilon_{m-1}).\]
 Then the image of $\theta^\vee$ is canonically isomorphic to $\ker(\theta')$. We can deduce that
 \[\ord_p(\vert \ker (\theta')\vert)=\lambda^\epsilon+\mu^\epsilon \phi(p^m)\]
 using a standard argument (see for example \cite[Section 13]{washington}).
 \end{proof}
 As a consequence, we deduce the following:
\begin{lemma}\label{lem:kob-psi}
Let $\epsilon=(-1)^m$. For $m\gg 0$ have 
\[\nabla(\ker(\psi_m))=\lambda^\epsilon+\mu^\epsilon \phi(p^m)+\corank(\ker(\psi_m)).\]
\end{lemma}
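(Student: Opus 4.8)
The plan is to feed the chain of cokernel identifications from Lemmas~\ref{lemma:cokernels-of-phi}, \ref{lem:diagram-at-infinity} and \ref{lem:cokernel-phi''} into the definition of the Kobayashi rank. Throughout, $\nabla(\ker(\psi_m))$ is read as $\nabla\big((\ker(\psi_m))^\vee\big)$ in the sense of Remark~\ref{rk:kob}. First I would record that, as established in the proof of Theorem~\ref{thm:vatsal}, the $\Zp$-coranks of $\ker(\psi_m)$ stabilise for $m\gg0$; write $c$ for this common value, so that $c=\corank(\ker(\psi_m))$ for all large $m$. Since $\phi_1\colon\ker(\psi_{m-1})\hookrightarrow\ker(\psi_m)$ is injective by Lemma~\ref{lem:diagram}, and $\corank$ is additive along the exact sequence $0\to\ker(\psi_{m-1})\to\ker(\psi_m)\to\coker(\phi_1)\to0$, the module $\coker(\phi_1)$ is cofinitely generated of corank $c-c=0$, hence finite. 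Dualising this sequence exhibits the connecting map of the projective system $\big((\ker(\psi_m))^\vee\big)_m$, namely $\phi_1^\vee$, as a surjection with finite kernel $\big(\coker(\phi_1)\big)^\vee$. Thus $\nabla(\ker(\psi_m))$ is defined for $m\gg0$, and Definition~\ref{def:kob} gives
\[
\nabla(\ker(\psi_m))=\length_{\Zp}\big((\coker(\phi_1))^\vee\big)-\length_{\Zp}\coker(\phi_1^\vee)+\dim_{\Qp}\big((\ker(\psi_{m-1}))^\vee\otimes_{\Zp}\Qp\big)=\ord_p\big(\big|\coker(\phi_1)\big|\big)+\corank(\ker(\psi_m)).
\]

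It then remains to compute $\ord_p(|\coker(\phi_1)|)$. Fix $\epsilon$ so that $z_m^\epsilon=z_m$; unwinding Definition~\ref{def:pmHeeg} this means $\epsilon=+$ when $m$ is even and $\epsilon=-$ when $m$ is odd, i.e.\ $\epsilon=(-1)^m$ as in the statement. For $m\ge\max(m_0,m'_0)$, Lemma~\ref{lemma:cokernels-of-phi} gives $\coker(\phi_1)\cong\coker(\phi'_1)$, Lemma~\ref{lem:diagram-at-infinity} gives $\coker(\phi'_1)\cong\coker(\phi''_1)$, and Lemma~\ref{lem:cokernel-phi''} gives $\ord_p(|\coker(\phi''_1)|)=\lambda^\epsilon+\mu^\epsilon\phi(p^m)$ for $m\gg0$. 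Concatenating these isomorphisms yields $\ord_p(|\coker(\phi_1)|)=\lambda^\epsilon+\mu^\epsilon\phi(p^m)$, and substituting into the displayed formula for $\nabla(\ker(\psi_m))$ gives the claim.

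Since the substantive work has already been carried out in the lemmas cited above, I do not anticipate a genuine obstacle here; the only points needing care are bookkeeping ones. One must take $m$ large enough that all three cokernel identifications hold simultaneously (so $m\ge\max(m_0,m'_0)$ together with the ``$m\gg0$'' ranges implicit in Theorem~\ref{thm:vatsal} and Lemma~\ref{lem:cokernel-phi''}), one must invoke the stabilisation of the $\Zp$-corank of $\ker(\psi_m)$ precisely to guarantee that $\coker(\phi_1)$ is finite and hence that $\nabla$ is defined, and one must match the index $\epsilon=(-1)^m$ in the statement with the running convention $z_m^\epsilon=z_m$ from \S\ref{S:kernel}. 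I would spell out this last identification explicitly to avoid any sign confusion.
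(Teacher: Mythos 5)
Your proposal is correct and follows the same route as the paper's own proof: dualise the injection $\phi_1$ from Lemma~\ref{lem:diagram} to realise the connecting map of the projective system as a surjection with kernel $(\coker\phi_1)^\vee$, then compute $\ord_p|\coker\phi_1|$ via the chain of identifications in Lemmas~\ref{lemma:cokernels-of-phi}, \ref{lem:diagram-at-infinity} and \ref{lem:cokernel-phi''}. Your write-up in fact makes explicit some bookkeeping the paper leaves implicit (finiteness of $\coker\phi_1$ via corank stabilisation, and the direct appeal to Definition~\ref{def:kob}), but the argument is the same.
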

\begin{proof}
Consider the canonical homomorphism 
\[\ker(\psi_m)^\vee\to \ker(\psi_{m-1})^\vee.\]
By Lemma~\ref{lem:diagram}, this homomorphism is surjective. By Lemmas~\ref{lemma:cokernels-of-phi}, \ref{lem:diagram-at-infinity} and \ref{lem:cokernel-phi''}, the kernel has cardinality $\lambda^\epsilon+\mu^\epsilon \phi(p^m)$.\end{proof}
As a final corollary, we obtain the main result of this section:
\begin{corollary}\label{cor:final}
There is a constant $\nu$ such that for $m\gg 0$ we have
\begin{align*}
\ord_p\left(\left|\ker(\psi_m)/\div\right|\right)&=\sum_{k\le m, k \textup{ even }}\mu^+ \phi(p^k)+\sum_{k\le m, k\textup{ odd }}\mu^-\phi(p^k)\\&+\left \lfloor \frac{m}{2}\right\rfloor\lambda^++\left \lfloor \frac{m+1}{2}\right\rfloor\lambda^-+\nu.\end{align*}
\end{corollary}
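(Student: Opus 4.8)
The plan is to feed Lemma~\ref{lem:kob-psi} into the telescoping formula of Remark~\ref{rk:kob}, exactly as was done for $\Sel^\BDP$ in Corollary~\ref{cor:BDP-growth}. First I would recall that, by the discussion in the proof of Theorem~\ref{thm:vatsal} and the injectivity of $\phi_1$ from Lemma~\ref{lem:diagram}, the $\Zp$-coranks of $\ker(\psi_m)$ stabilise for $m\gg0$ and the transition maps $\ker(\psi_{m-1})\hookrightarrow\ker(\psi_m)$ are injective. Hence Remark~\ref{rk:kob} applies to the direct system $\{\ker(\psi_m)\}_m$: there is an index $m_1$ and a constant $\nu$ such that, for all $m\ge m_1$,
\[
\ord_p\left(\left|\ker(\psi_m)/\div\right|\right)=\sum_{k=m_1}^{m}\Bigl(\nabla\ker(\psi_k)^\vee-\corank_{\Zp}\ker(\psi_k)\Bigr)+\nu,
\]
absorbing the finitely many terms with $k<m_1$ and the constant corank contributions into $\nu$.

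Next I would substitute Lemma~\ref{lem:kob-psi}, which (for $k\gg0$, with $\epsilon=(-1)^k$) gives
\[
\nabla\ker(\psi_k)^\vee-\corank_{\Zp}\ker(\psi_k)=\lambda^{\epsilon}+\mu^{\epsilon}\phi(p^{k}).
\]
So the sum over $k$ splits according to the parity of $k$: the even $k$ contribute $\lambda^{+}$ and $\mu^{+}\phi(p^{k})$, the odd $k$ contribute $\lambda^{-}$ and $\mu^{-}\phi(p^{k})$. Counting the number of even (resp.\ odd) integers in the range $[m_1,m]$ produces $\lfloor m/2\rfloor$ (resp.\ $\lfloor (m+1)/2\rfloor$) up to a bounded error that, again, is absorbed into $\nu$ — note that these floor functions are precisely what $\omega_m^{\pm}$ were set up to track in Section~\ref{sec:notation}. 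Collecting terms yields
\[
\ord_p\left(\left|\ker(\psi_m)/\div\right|\right)=\sum_{k\le m,\ k\textup{ even}}\mu^{+}\phi(p^{k})+\sum_{k\le m,\ k\textup{ odd}}\mu^{-}\phi(p^{k})+\left\lfloor\frac{m}{2}\right\rfloor\lambda^{+}+\left\lfloor\frac{m+1}{2}\right\rfloor\lambda^{-}+\nu
\]
for $m\gg0$, which is the claimed formula.

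The only genuine subtlety — and the step I would be most careful about — is the bookkeeping of the lower-order terms when passing from the partial sums $\sum_{k=m_1}^{m}$ to the "complete" sums $\sum_{k\le m}$ appearing in the statement: one must check that the discrepancy is independent of $m$ so that it can legitimately be swallowed by the single constant $\nu$. This is immediate for the $\mu^{\pm}$-part (the missing $\phi(p^k)$ with $k<m_1$ form a fixed finite set) and for the $\lambda^{\pm}$-part (the count of even/odd $k$ below $m_1$ is a fixed constant, and $\lfloor m/2\rfloor$ already has the right leading behaviour), so no real obstacle arises; everything else is the routine telescoping already carried out in Corollary~\ref{cor:BDP-growth}.
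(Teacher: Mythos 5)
Your proposal is correct and follows exactly the paper's own argument, which is precisely to combine Lemma~\ref{lem:kob-psi} with the telescoping formula of Remark~\ref{rk:kob} and absorb the bounded discrepancies into $\nu$. The parity bookkeeping you spell out is the only content of the step, and you have handled it correctly.
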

\begin{proof}
This follows from combining Lemma~\ref{lem:kob-psi} with the formula presented in Remark~\ref{rk:kob}.
\end{proof}

\begin{remark}\label{rk:final}
Combining Corollary~\ref{cor:final} with Theorem~\ref{BDP sha} and Corollary~\ref{cor:BDP-growth} proves 
Theorem~\ref{main thm}. Note that we have
\[
\mu_{E,K}^\pm=\mu_0-2\mu^\pm,\quad \lambda_{E,K}^\pm=\lambda_0-\lambda'-2\lambda^\pm.
\]
Note that if $\mu_0=\lambda_0=0$, then $\Sel^\BDP(E/K_\infty)$ is trivial in view of Corollary~\ref{cor:trivial}). It then follows from Theorem~\ref{thm:control} that $\Sel^\BDP(E/K_m)$ is also trivial for all $m$. In particular, Theorem~\ref{BDP sha} implies that  $\sha(E/K_m)[p^\infty]$ is trivial for all $m$.
\end{remark}

\subsection{Method 2: Plus/minus subgroups and control theorems}
We follow \cite{agboolahowardsupersingular} to define the following plus and minus subgroups:
\begin{defn}\label{def:pm}
For an integer $ m\ge 0$, we define $\Xi_m^+$ (respectively $\Xi_m^-$) to be the set of characters of $G_m$ of exact order $p^k$ for $k$ even (respectively odd).

We define $\tEpm(K_m)$ to be the set of elements
\[\left\{
x\in E(K_m)\otimes_{\Zp}\Qp:\sum_{\sigma\in G_m}\chi(\sigma)x^\sigma=0,\ \forall\chi\in\Xi_m^\mp\right\}
\]
and define $\cM^\pm(K_m)$ to be the image of $\tEpm(K_m)$ under
\[
E(K_m)\otimes\Qp\rightarrow E(K_m)\otimes\Qp/\Zp\rightarrow H^1(K_m,\Ep),
\]
where the last map is the Kummer map. We define $\cM^\pm(K_\infty)$ to be the direct limit of $\cM^\pm(K_m)$ as $m$ runs over all non-negative integers (which results in a subgroup of $H^1(K_\infty,\Ep)$.

Similarly, for $k_m=K_{m,\p}$ where $\p\in\{v,\overline{v}\}$, we define the subgroups $\cM^\pm(k_m)\subset H^1(k_m,\Ep)$ for an integer $m\ge0$ in the same manner as $\cM^\pm(K_m)$ and  $\cM^\pm(k_\infty)=\displaystyle\varinjlim_m \cM^\pm(k_m)\subset H^1(k_\infty,\Ep)$.

Finally, for $0\le m\le\infty$, we define $\cM(K_m)$ and $\cM(k_m)$ to be the images of $E(K_m)\otimes\Qp/\Zp$ and $E(k_m)\otimes\Qp/\Zp$ inside $H^1(K_m,\Ep)$ and $H^1(k_m,\Ep)$ respectively.
\end{defn}

\begin{remark}\label{rk:diff-pm}
Note that for $0\le m\le \infty$, $\cM^\pm(k_m)$ are in fact subgroups of  $H^1_\pm(k_m,\Ep)$. Furthermore,  equality holds for the "plus" group. For the "minus" group, we have \[\cM^-(k_m)=(\tau-1)H^1_-(k_m,\Ep).\]
(Recall that $\tau$ is a topological generator of $\Gamma$.)\end{remark}

\begin{defn}
Given a $\Lambda$-module $M$, we write $M[\omega_m^\pm]$ for the kernel of the multiplication-by-$\omega_m^\pm$ map in $M$ (where $\omega_m^\pm$ is as defined at the end of \S\ref{sec:notation}).
\end{defn}

\begin{remark}
Note that the $G_m$-representation $E(K_m)\otimes \Qp$ is isomorphic to $\widetilde{E}^+(K_m)\oplus \widetilde{E}^-(K_m)$. Furthermore, $\left(E(K_m)\otimes \Qp\right)[\omega_m^\pm]$ is precisely $\tEpm(K_m)$.
\end{remark}

We prove several basic properties of the plus and minus subgroups we have defined in Definition~\ref{def:pm}. 

\begin{proposition}\label{prop:M-pm}
The following properties hold:
\begin{itemize}
     \item[(1)] For all $m\ge0$, $\cM^+(K_m)\oplus \cM^-(K_m)\cong \cM(K_m)$ as $\Lambda$-modules. The same holds if we replace $K_m$ by $k_m$.
    \item[(2)] There exist injective  $\Lambda$-morphisms $$\cM^\pm(K_\infty)^\vee\hookrightarrow \Lambda^{r^\pm}\oplus \bigoplus_{n\in I^\pm}( \Lambda/\Phi_{n})^{t_n}$$ for some integers $r^\pm,t_n\in\ZZ_{\ge0}$ and finite sets $I^\pm\subset\ZZ_{\ge0}$ containing only even/odd integers. Furthermore, the cokernels of these maps are finite.
  
    \item[(3)] The $\Lambda$-modules $\cM^\pm(k_\infty)^\vee$ are free of rank one.
\end{itemize}
\end{proposition}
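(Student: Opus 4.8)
\emph{Plan.} I would prove the three assertions in the order (1), (3), (2): statement (1) is a formal consequence of the eigenspace decomposition of $E(K_m)\otimes\Qp$ plus a corank count; (3) comes from the explicit description of $\widehat E(k_m)$ recalled in \S\ref{Sec:pm} together with Remark~\ref{rk:diff-pm}; and (2) is the global analogue of (3), in which the Heegner modules $\cE^\pm_m$ of Definition~\ref{def:pmHeeg} and the rank stabilisation of Theorem~\ref{thm:vatsal} measure the failure of the global picture to be as clean as the local one. I will freely use that a cofinitely generated $p$-divisible $\Zp$-module of corank $0$ vanishes, and that a surjection of cofinitely generated $\Zp$-modules with trivial kernel is an isomorphism.

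\emph{Part (1).} By the remark preceding the proposition, $E(K_m)\otimes\Qp=\widetilde E^+(K_m)\oplus\widetilde E^-(K_m)$ as $\Qp[G_m]$-modules. Since $E(K_m)$ has no $p$-torsion, the Kummer map identifies $\cM(K_m)$ with $E(K_m)\otimes\Qp/\Zp$ and $\cM^\pm(K_m)$ with the image of $\widetilde E^\pm(K_m)$ therein; all three groups are $p$-divisible, and
\[
\corank_{\Zp}\cM^+(K_m)+\corank_{\Zp}\cM^-(K_m)=\rank E(K_m)=\corank_{\Zp}\cM(K_m).
\]
Surjectivity of the natural map $\cM^+(K_m)\oplus\cM^-(K_m)\to\cM(K_m)$ is immediate (lift an element of $\cM(K_m)$ to $E(K_m)\otimes\Qp$ and split it), so by the corank equality it suffices to check $\cM^+(K_m)\cap\cM^-(K_m)=0$. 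This is the only non-formal point; it reduces to the compatibility of the $\Zp[G_m]$-lattice $E(K_m)\otimes\Zp$ with the eigenspace decomposition, which over $k_m$ is visible from the exact sequence (4) of \S\ref{Sec:pm} and Remark~\ref{rk:diff-pm}, and over $K_m$ can be deduced from the cyclic structure of the Heegner modules $\cE^\pm_m$. The same argument proves the statement over $k_m$.

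\emph{Part (3).} By Remark~\ref{rk:diff-pm} one has $\cM^+(k_m)=\widehat E(k_m)^+\otimes\Qp/\Zp$ and $\cM^-(k_m)=(\tau-1)\bigl(\widehat E(k_m)^-\otimes\Qp/\Zp\bigr)$. The structure theory of Iovita--Pollack and Kobayashi, through the norm-compatible generators $d_m^\pm$ of \S\ref{Sec:pm}, supplies $\Lambda$-isomorphisms $\widehat E(k_m)^+\cong\Lambda/\omega_m^+$ and $\widehat E(k_m)^-\cong\Lambda/\omega_m^-$ compatible with the norm maps; dualising and using that $\tau-1=T$ divides $\omega_m^-$, one gets $\cM^+(k_m)^\vee\cong\Lambda/\omega_m^+$ and $\cM^-(k_m)^\vee\cong\Lambda/(\omega_m^-/T)$, with transition maps in $m$ the evident surjections. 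Since the polynomials $\omega_m^+$, respectively $\omega_m^-/T$, have unbounded degree and pairwise no common irreducible factor, the corresponding ideals shrink to $0$; as $\Lambda$ is $(p,T)$-adically complete, passing to the inverse limit yields $\cM^+(k_\infty)^\vee\cong\varprojlim_m\Lambda/\omega_m^+\cong\Lambda$ and similarly $\cM^-(k_\infty)^\vee\cong\Lambda$.

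\emph{Part (2).} Localising, $\widetilde E^\pm(K_m)$ maps into $\widetilde E^\pm(k_m)$ at both primes above $p$, so $\cM^\pm(K_m)$ lands inside $\Sel^\pm(E/K_m)$; hence $\cM^\pm(K_\infty)^\vee$ is a quotient of $X^\pm(E/K_\infty)$, so finitely generated over $\Lambda$, and it is $\Zp$-torsion-free, being the dual of a divisible group. Write $0\to T^\pm\to\cM^\pm(K_\infty)^\vee\to F^\pm\to0$ with $T^\pm$ the $\Lambda$-torsion submodule. Since $\Lambda$ is a two-dimensional regular local ring, the torsion-free $F^\pm$ embeds into its reflexive (hence free) hull $\Lambda^{r^\pm}$ with finite pseudonull cokernel; pushing the displayed extension out along this embedding splits it (as $\Lambda^{r^\pm}$ is free), so $\cM^\pm(K_\infty)^\vee$ embeds with finite cokernel into $T^\pm\oplus\Lambda^{r^\pm}$, and it remains to embed $T^\pm$ into a finite sum of $\Lambda/\Phi_n$'s with finite cokernel. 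For this I would use that $\cE^\pm_\infty\hookrightarrow\cM^\pm(K_\infty)$ with $(\cE^\pm_\infty)^\vee\cong\Lambda$ by \cite[Proposition~4.5]{HLV}, which exhibits $T^\pm$ as the Pontryagin dual of $\cM^\pm(K_\infty)/\cE^\pm_\infty$; by Theorem~\ref{thm:vatsal} (so that $e_m=1$ and $z_m\notin E(K_{m-1})$ for $m\gg0$) and Corollary~\ref{cor:annihilator polynomials} this quotient is a divisible group of bounded $\Zp$-corank whose constituents are characters of exact order $p^n$ for finitely many $n$, all even (respectively odd), and which is killed by a squarefree product of the $\Phi_n$; localising at each height-one prime $(\Phi_n)$ then shows its dual embeds into $\bigoplus_{n\in I^\pm}(\Lambda/\Phi_n)^{t_n}$ with finite cokernel, and reassembling gives the proposition (with $r^\pm=1$). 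The main obstacle is precisely this identification of $T^\pm$ — that the deviation of the global plus/minus modules from the local, free-of-rank-one picture is concentrated at finitely many layers $n$ of the correct parity — which is where Theorem~\ref{thm:vatsal} and the structure of the Heegner modules are indispensable; the only other delicate ingredient is the lattice compatibility invoked for the injectivity in part (1).
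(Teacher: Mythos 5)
Your part (3) is correct and is essentially the paper's argument (the paper simply combines $H^1_\pm(k_\infty,\Ep)^\vee\cong\Lambda$ from Iovita--Pollack with Remark~\ref{rk:diff-pm}), and the skeleton of (1) is right. But there are two genuine gaps. First, in (1) you correctly note that surjectivity plus the corank count is not enough --- two divisible subgroups of complementary corank inside $E(K_m)\otimes\Qp/\Zp$ can still meet in a nonzero finite group --- so everything rests on showing $\cM^+(K_m)\cap\cM^-(K_m)=0$. Your proposed justification via ``the cyclic structure of the Heegner modules $\cE^\pm_m$'' does not work: $\cE^\pm_m$ is only the cyclic submodule generated by the Heegner point and in general is a proper submodule of $\cM^\pm(K_m)$, so it cannot control the integral lattice $E(K_m)\otimes\Zp$ inside the full eigenspace decomposition. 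The paper instead runs the trace-relation argument of \cite[proof of Lemma~8.17]{kobayashi03}, which works with the defining conditions of the plus/minus subgroups directly.

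Second, and more seriously, in (2) the whole difficulty is to show that the $\Lambda$-torsion part of $\cM^\pm(K_\infty)^\vee$ is \emph{reduced}: the elementary divisors must be $\Phi_n$ to the first power, not $\Phi_n^{a}$ with $a\ge 2$ and not powers of $p$. You assert that $\cM^\pm(K_\infty)/\cE^\pm_\infty$ is ``killed by a squarefree product of the $\Phi_n$'' by appeal to Theorem~\ref{thm:vatsal} and Corollary~\ref{cor:annihilator polynomials}, but those results describe only the cyclic submodule $\cE^\pm_\infty$ and the total rank growth; they do give finiteness of $I^\pm$ and the parity constraint, but nothing in them excludes a constituent $\Lambda/\Phi_n^2$ (which is $\Zp$-free, so $\Zp$-torsion-freeness of the dual does not exclude it either). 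This is precisely where the paper invokes Lee's control machinery \cite{lee20}: the $\Zp$-cofreeness of each finite-level module $\cM^\pm(K_m)$ (from part (1)) together with the inclusions $\cM^\pm(K_m)\hookrightarrow\cM^\pm(K_\infty)[\omega_m^\pm]$ yields $\varprojlim_m\left(\cM^\pm(K_\infty)^\vee/\omega_m^\pm\right)[p^\infty]=0$, and that vanishing is what forces the stated elementary form. A smaller point: your claim that pushing the extension $0\to T^\pm\to\cM^\pm(K_\infty)^\vee\to F^\pm\to 0$ out along $F^\pm\hookrightarrow\Lambda^{r^\pm}$ splits it is unjustified, since $\Ext^1_\Lambda(F^\pm,T^\pm)$ need not vanish; what you want is the standard fact that a finitely generated $\Lambda$-module with no nonzero finite submodule injects into its elementary module with pseudonull (here finite) cokernel, which applies because $\cM^\pm(K_\infty)^\vee$ is $\Zp$-torsion-free.
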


\begin{proof}
As a $G_m$-Galois representation, it is clear that $E(K_m)\otimes\Qp$ decomposes as $\widetilde E^+(K_m)\oplus \widetilde E^-(K_m)$. Thus, we have a surjection
\[
\cM^+(K_m)\oplus \cM^-(K_m)\rightarrow \cM(K_m).
\]
The injectivity of this map follows from the argument presented in \cite[proof of Lemma~8.17]{kobayashi03}. It is clear that the proof equally applies if we replace $K_m$ by $k_m$. This proves the statement (1).

It follows from (1) that $\cM^\pm(K_m)$ is a cofinitely generated cofree $\Zp$-module. Furthermore, there is a natural map 
\[
\cM^\pm(K_m)\hookrightarrow \cM^\pm(K_\infty)[\omega_m^\pm].
\]Therefore, the proof of \cite[Theorem~2.1.2]{lee20} can be carried over, after replacing the functors on finitely generated $\Lambda$-modules
\[
\mathfrak{F}:M\mapsto\left(\varinjlim_m \frac{M}{\omega_m M}[p^\infty]\right)^\vee\quad\text{and}\quad  \mathfrak{G}:M\mapsto\left(\varprojlim_m \frac{M}{\omega_m M}[p^\infty]\right)
\]
by the plus and minus counterparts
\[
\mathfrak{F}^\pm:M\mapsto\left(\varinjlim_m \frac{M}{\omega_m^\pm M}[p^\infty]\right)^\vee\quad\text{and}\quad \mathfrak{G}^\pm:M\mapsto\left(\varprojlim_m \frac{M}{\omega_m^\pm M}[p^\infty]\right)
\] respectively. In particular, we obtain that \[\varprojlim_{m}\left(\frac{\cM^\pm(K_\infty)^\vee}{\omega_m^\pm\cM^\pm(K_\infty)^\vee}\right)[p^\infty]=0.\] From this, we can conclude as in \cite[Lemma A.2.9]{lee20} that $\cM^\pm(K_\infty)^\vee$ has the desired form,  proving (2).

We recall that
\[
H^1_\pm(k_\infty,\Ep)^\vee\cong \Lambda
\]
as $\Lambda$-modules by \cite[Proposition~4.16]{iovitapollack06}. Thus, the assertion now follows from Remark~\ref{rk:diff-pm}.
\end{proof}

We define the following Selmer groups:
\begin{defn}
For $0\le m\le \infty$, we define $\cS^\pm(E/K_m)$ to be the kernel of
\[
H^1(G_\Sigma(K_m),\Ep)\rightarrow \prod_{\substack{w\in \Sigma(K_m)\\w\nmid p}}\frac{H^1(K_{m,w},\Ep)}{H^1_{\Gr}(K_{m,w},\Ep)}\times\prod_{\p\in\{v,\overline{v}\}}\frac{H^1(K_{m,\p},\Ep)}{\cM^\pm(K_{m,\p})}.
\]
\end{defn}

\begin{theorem}\label{thm:controlpm}The following properties hold:
\begin{itemize}
    \item[(1)] The restriction map induces an injective $\Lambda$-morphism
\[
\cS^\pm(E/K_m)[\omega_m^\pm]\rightarrow \cS^\pm(E/K_\infty)[\omega_m^\pm]
\]
with finite cokernel whose cardinality is bounded independently of $m$. 
\item[(2)] There is a natural $\Lambda$-homomorphism
\[
 \cS^+(E/K_m)[\omega_m^+]\oplus  \cS^-(E/K_m)[\omega_m^-]\rightarrow \Sel(E/K_m)
\]
with finite kernel and cokernel.
\item[(3)] The $\Lambda$-modules $\cS^\pm(E/K_\infty)$ are of corank one.
\end{itemize}

\end{theorem}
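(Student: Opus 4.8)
The plan is to obtain (3) and (1) by control-theoretic arguments and then to deduce (2) by a diagram chase; the one genuinely delicate point will be the finiteness of the kernel in (2). For (3), I would first observe that over $K_\infty$ the Selmer group $\cS^\pm(E/K_\infty)$ coincides with the plus/minus Selmer group $\Sel^\pm(E/K_\infty)$ of Longo--Vigni, whose Pontryagin dual has $\Lambda$-rank one by \cite[Theorem~1.4]{longovigni}. The local conditions away from $p$ are literally the same, so only the conditions at $v$ and $\overline v$ need comparison. By Remark~\ref{rk:diff-pm}, $\cM^+(k_\infty)=H^1_+(k_\infty,\Ep)$ and $\cM^-(k_\infty)=(\tau-1)H^1_-(k_\infty,\Ep)$. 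Since $H^1_-(k_\infty,\Ep)^\vee\cong\Lambda$ (\cite[Proposition~4.16]{iovitapollack06}, cf.\ the proof of Proposition~\ref{prop:M-pm}(3)), Pontryagin-dualising the injection $\Lambda\xrightarrow{\tau-1}\Lambda$ shows that $\tau-1$ acts surjectively on $H^1_-(k_\infty,\Ep)\cong\Lambda^\vee$, whence $\cM^-(k_\infty)=H^1_-(k_\infty,\Ep)$. Thus $\cS^\pm(E/K_\infty)=\Sel^\pm(E/K_\infty)$, and (3) follows.

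Part (1) is the $\pm$-analogue of Theorem~\ref{thm:control}, and I would run the usual fundamental-diagram argument with all terms restricted to $\omega_m^\pm$-torsion, following the template of \cite[Theorem~2.1.2]{lee20} (already used in the proof of Proposition~\ref{prop:M-pm}) with $\omega_m$ replaced by $\omega_m^\pm$. Because $E(K_\infty)[p^\infty]=0$ by \cite[Proposition~8.7]{kobayashi03} and $\omega_m^\pm\mid\omega_m$, the restriction map $H^1(G_\Sigma(K_m),\Ep)[\omega_m^\pm]\to H^1(G_\Sigma(K_\infty),\Ep)[\omega_m^\pm]$ is an isomorphism; this forces $r_m^\pm\colon\cS^\pm(E/K_m)[\omega_m^\pm]\to\cS^\pm(E/K_\infty)[\omega_m^\pm]$ to be injective, and the snake lemma bounds $\coker(r_m^\pm)$ by the product, over $w\in\Sigma(K_m)$, of the kernels of the local comparison maps. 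For $w\nmid p$ the relevant kernel, on $H^1(K_{m,w},\Ep)/H^1_{\Gr}(K_{m,w},\Ep)$, is finite and uniformly bounded exactly as in the proof of Theorem~\ref{thm:control} (via \cite[Lemma~3.4]{Lim-control}); for $\p\in\{v,\overline v\}$ I would use that $\cM^\pm(k_m)\to\cM^\pm(k_\infty)[\omega_m^\pm]$ is injective with finite, $m$-uniformly bounded cokernel --- for ``$+$'' this is the Iovita--Pollack/Kobayashi local control theorem for $H^1_+$, and for ``$-$'' it follows from the ``$+$'' statement applied to $H^1_-$, together with $\cM^-(k_m)=(\tau-1)H^1_-(k_m,\Ep)$ and the surjectivity of $\tau-1$ from Part~(3).

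For Part (2) I would take the map $(x,y)\mapsto x+y$, computed inside $H^1(G_\Sigma(K_m),\Ep)$. Its image lies in $\Sel(E/K_m)$: away from $p$ each summand satisfies the $\Gr$-condition, and at $\p\mid p$ one has $\mathrm{loc}_\p(x+y)\in\cM^+(k_m)+\cM^-(k_m)=\cM(k_m)=H^1_{\Gr}(K_{m,\p},\Ep)$ by Proposition~\ref{prop:M-pm}(1). For the cokernel, the Kummer images $\cM^\pm(K_m)$ of $\tEpm(K_m)$ lie in $\cS^\pm(E/K_m)[\omega_m^\pm]$ (being $\omega_m^\pm$-torsion, inside the Selmer group, and localising into $\cM^\pm(k_m)$), and $\cM^+(K_m)+\cM^-(K_m)=\cM(K_m)=E(K_m)\otimes\Qp/\Zp=\Sel(E/K_m)_{\div}$ because $\sha(E/K_m)[p^\infty]$ is finite; so the cokernel is a quotient of $\Sel(E/K_m)/\Sel(E/K_m)_{\div}\cong\sha(E/K_m)[p^\infty]$, finite by hypothesis. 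For the kernel, an element has the form $(x,-x)$ with $x\in\cS^+(E/K_m)[\omega_m^+]\cap\cS^-(E/K_m)[\omega_m^-]$; since $\cM^+(k_m)\cap\cM^-(k_m)=0$ (Proposition~\ref{prop:M-pm}(1)), such $x$ localises trivially at $v$ and $\overline v$, so $x\in\Sel^{0,0}(E/K_m)$, and $x$ is killed by $\gcd(\omega_m^+,\omega_m^-)=\tau-1$, hence is fixed by $G_m$. By inflation--restriction (using $E(K_\infty)[p^\infty]=0$ once more) the kernel is, up to a finite subquotient of order bounded independently of $m$, isomorphic to $\Sel^{0,0}(E/K)$; this group is finite --- its non-divisible part is finite by Proposition~\ref{v sha} applied over $K$ (using $|\sha(E/K)[p^\infty]|<\infty$), while its divisible part vanishes because the $p$-adic localisation of $E(K)$ is non-degenerate, the localisation of the Heegner point at $v$ being non-trivial (as can be read off from the proof of Theorem~\ref{thm:torsion-BDP}).

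The hard part is precisely this kernel-finiteness in (2): one must know that the Selmer group of $E$ over $K$ with trivial local conditions at both primes above $p$ is finite, which is exactly where the non-triviality of the Heegner point's localisation at $v$ --- equivalently, the non-vanishing of the BDP $p$-adic $L$-function at the trivial character --- has to be used. Everything else reduces to routine snake-lemma bookkeeping together with the local computations of Iovita--Pollack and Kobayashi already cited in the paper.
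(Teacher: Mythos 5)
Your treatment of parts (1) and (3) is sound and is essentially the route the paper takes (the paper simply cites \cite[Theorem~9.3]{kobayashi03}, \cite[Theorem~5.2]{agboolahowardsupersingular} for (1) and combines Remark~\ref{rk:diff-pm} with \cite[Theorem~5.1]{longovigni} for (3)). Your observation that $\tau-1$ acts surjectively on $H^1_-(k_\infty,\Ep)\cong\Lambda^\vee$, so that $\cM^-(k_\infty)=H^1_-(k_\infty,\Ep)$ and hence $\cS^\pm(E/K_\infty)=\Sel^\pm(E/K_\infty)$ on the nose, is correct and slightly sharper than the rank comparison the paper records. For (2), your identification of the map as $(x,y)\mapsto x+y$ and your cokernel argument (image contains $\cM^+(K_m)+\cM^-(K_m)=\Sel(E/K_m)_{\div}$, so the cokernel is a quotient of $\sha(E/K_m)[p^\infty]$) are fine.

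The gap is exactly at the point you flag as delicate: the finiteness of the kernel in (2). You correctly reduce (up to finite groups) to the finiteness of $\Sel^{0,0}(E/K)$, but your justification that its divisible part vanishes does not work. The non-triviality of $\loc_v(z_0)$ only shows that $E(K)\otimes\Qp\to E(K_v)\otimes\Qp$ is \emph{surjective} onto a one-dimensional $\Qp$-vector space; what you need is that the joint localisation
\[
E(K)\otimes\Qp\longrightarrow \bigl(E(K_v)\oplus E(K_{\overline v})\bigr)\otimes\Qp
\]
is \emph{injective}, and its kernel has dimension at least $\rank E(K)-2$. Any nonzero $\xi$ in this kernel is fixed by $G_m$, hence lies in $\widetilde{E}^+(K_m)$, and the corank-one divisible subgroup it generates in $E(K)\otimes\Qp/\Zp$ sits inside $\cS^+(E/K_m)[\omega_m^+]\cap\cS^-(E/K_m)[\omega_m^-]$ (it is killed by $T=\gcd(\omega_m^+,\omega_m^-)$ and localises to $0\in\cM^+(k_m)\cap\cM^-(k_m)$). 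So whenever $\rank E(K)\ge 3$ your kernel is genuinely infinite and no refinement of the Heegner-point input at a single prime can rescue the argument: the required statement is a non-degeneracy of the localisation at \emph{both} primes above $p$ simultaneously, which is a different (and in general false, for large rank) assertion from the non-vanishing of the BDP $p$-adic $L$-value. This is precisely the content that the paper outsources to \cite[Proposition~5.3]{agboolahowardsupersingular}, and your proposal does not supply a valid substitute for it; to repair the proof you would need either an a priori bound forcing $\Sel^{0,0}(E/K_m)^{G_m}$ to be finite, or a different ``natural map'' whose kernel avoids $\Sel^{0,0}(E/K_m)^{G_m}$ altogether.
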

\begin{proof}
(1) follows from the same proof as \cite[Theorem~9.3]{kobayashi03}, even though our local conditions are slightly different. See \cite[Theorem~5.2]{agboolahowardsupersingular} where a similar result has been proved under the local conditions we have chosen.

(2) follows from the same proof as \cite[Proposition~5.3]{agboolahowardsupersingular}.

Finally, Remark~\ref{rk:diff-pm} tells us that $\Lambda$-modules $\cS^\pm(E/K_\infty)^\vee$ and $X^\pm(E/K_\infty)$ have the same rank. Thus, (3) follows from \cite[Theorem~5.1]{longovigni}.
\end{proof}

\begin{theorem}\label{thm:control-M}
The restriction map induces  injective $\Lambda$-homomorphisms
\[
s_m^\pm:\cM^\pm(K_m)\rightarrow \cM^\pm(K_\infty)[\omega_m^\pm]
\]
with finite cokernel whose cardinality is bounded independently of $m$. 
\end{theorem}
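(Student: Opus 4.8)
The plan is to establish the two assertions of the theorem separately: that $s_m^\pm$ is injective, and that $\coker(s_m^\pm)$ is finite of order bounded independently of $m$. Injectivity is the easy part. Recall that $\cM^\pm(K_m)$ is by definition the image of $\tEpm(K_m)=(E(K_m)\otimes\Qp)[\omega_m^\pm]$ under the composite $E(K_m)\otimes\Qp\to E(K_m)\otimes\Qp/\Zp\hookrightarrow H^1(K_m,\Ep)$; as $E(K_m)$ has no $p$-torsion, this image is a divisible subgroup of $H^1(K_m,\Ep)$ annihilated by $\omega_m^\pm$. Since $E(K_\infty)[p^\infty]=0$ by \cite[Proposition~8.7]{kobayashi03}, the restriction map $H^1(K_m,\Ep)\to H^1(K_\infty,\Ep)$ is injective, so $s_m^\pm$ is injective; and since $\cM^\pm(K_m)$ is killed by $\omega_m^\pm$ its image lies in $\cM^\pm(K_\infty)[\omega_m^\pm]$, so that $s_m^\pm$ is well defined.

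For the cokernel I would first reduce to two statements. Being divisible and contained in $\cM^\pm(K_\infty)[\omega_m^\pm]$, the group $\cM^\pm(K_m)$ lies inside the maximal divisible subgroup $(\cM^\pm(K_\infty)[\omega_m^\pm])_{\div}$, so $\coker(s_m^\pm)$ surjects onto $\cM^\pm(K_\infty)[\omega_m^\pm]/\div$ with the divisible group $(\cM^\pm(K_\infty)[\omega_m^\pm])_{\div}/\cM^\pm(K_m)$ as kernel. Hence it suffices to prove: \textup{(a)} $\cM^\pm(K_\infty)[\omega_m^\pm]/\div$ is finite of order bounded independently of $m$; and \textup{(b)} $\cM^\pm(K_m)=(\cM^\pm(K_\infty)[\omega_m^\pm])_{\div}$ for $m\gg 0$. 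Statement \textup{(a)} follows from Proposition~\ref{prop:M-pm}(2): writing $M^\pm=\cM^\pm(K_\infty)^\vee$, one has $\cM^\pm(K_\infty)[\omega_m^\pm]/\div\cong\bigl((M^\pm/\omega_m^\pm M^\pm)_{\tors}\bigr)^\vee$, and from the exact sequence $0\to M^\pm\to Y^\pm\to Z^\pm\to 0$ of that proposition — with $Y^\pm=\Lambda^{r^\pm}\oplus\bigoplus_{n\in I^\pm}(\Lambda/\Phi_n)^{t_n}$, $I^\pm$ finite and consisting of integers of the appropriate parity, and $Z^\pm$ finite — one sees that $Y^\pm/\omega_m^\pm Y^\pm$ is $\Zp$-free once $m\ge\max I^\pm$ (as then $\Phi_n\mid\omega_m^\pm$ for every $n\in I^\pm$), so that tensoring with $\Lambda/\omega_m^\pm$ over $\Lambda$ realizes $(M^\pm/\omega_m^\pm M^\pm)_{\tors}$ as a subquotient of $\mathrm{Tor}_1^{\Lambda}(Z^\pm,\Lambda/\omega_m^\pm)$, whose order is at most $|Z^\pm|$.

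The remaining statement \textup{(b)} is the heart of the matter, and I expect it to be the main obstacle. The plan here is to run a control-theorem argument on the Mordell--Weil side, entirely parallel to the one for the signed Selmer groups in Theorem~\ref{thm:controlpm}(1) (which follows the method of Kobayashi \cite[Theorem~9.3]{kobayashi03} and Lee \cite{lee20}): one compares the tautological exact sequences defining $\cM^\pm(K_m)$ and $\cM^\pm(K_\infty)[\omega_m^\pm]$ — extracted from $0\to E(\cdot)\otimes\Zp\to E(\cdot)\otimes\Qp\to E(\cdot)\otimes\Qp/\Zp\to 0$ by passing to $\omega_m^\pm$-parts — applies the snake lemma, and bounds the defect by an $H^1(\Gamma_m,-)$-term attached to a finitely generated $\Lambda$-module, which therefore has no nonzero divisible subgroup; this yields $(\cM^\pm(K_\infty)[\omega_m^\pm])_{\div}\subseteq\cM^\pm(K_m)$. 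Making this precise uses the $\pm$-decomposition $\cM(K_\bullet)=\cM^+(K_\bullet)\oplus\cM^-(K_\bullet)$ of Proposition~\ref{prop:M-pm}(1) to reduce the global input to a Mordell--Weil control statement for $E(\cdot)\otimes\Qp/\Zp$ over $K_\infty/K$ — available because $\cM(K_\infty)$ is cofinitely generated over $\Lambda$, again by Proposition~\ref{prop:M-pm} — and the freeness statement of Proposition~\ref{prop:M-pm}(3) to control the places above $p$. The delicate point, and the reason the signed setup is indispensable, is that the Mordell--Weil ranks over $K_m$ are unbounded (forced by the sign $-1$ of the functional equation under the generalized Heegner hypothesis), so the unmodified control map has infinite cokernel; only after truncating to the $\omega_m^\pm$-parts, as in Kobayashi's argument, does one recover a finite cokernel, whose uniform boundedness then follows from \textup{(a)}. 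Combining \textup{(a)} and \textup{(b)}, $\coker(s_m^\pm)$ is a quotient of the finite group $\cM^\pm(K_\infty)[\omega_m^\pm]/\div$, hence has order at most $|Z^\pm|$ for $m\ge\max I^\pm$, which is the assertion of the theorem.
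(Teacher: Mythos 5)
Your injectivity argument and your step \textup{(a)} are correct and coincide with what the paper does: the paper likewise gets injectivity from $E(K_\infty)[p^\infty]=0$, and its last step identifies $(\coker s_m^\pm)^\vee$ with $\left(\cM^\pm(K_\infty)^\vee/\omega_m^\pm\right)[p^\infty]$ and bounds this via Proposition~\ref{prop:M-pm}(2), essentially your Tor computation. The gap is in step \textup{(b)}, which you flag as the heart of the matter but only sketch. What \textup{(b)} amounts to is the equality of $\Zp$-coranks $\corank_{\Zp}\cM^\pm(K_m)=\corank_{\Zp}\cM^\pm(K_\infty)[\omega_m^\pm]$ (granting this, divisibility of $\cM^\pm(K_m)$ forces $\cM^\pm(K_m)=(\cM^\pm(K_\infty)[\omega_m^\pm])_{\div}$), and nothing in your sketch produces it. The snake-lemma comparison you propose, applied to $0\to L\to V\to\cM^\pm(K_\infty)\to0$ with $V=\varinjlim_n\tEpm(K_n)$ and $L$ the corresponding integral kernel, identifies $\coker(s_m^\pm)$ with a subgroup of $(L/\omega_m^\pm L)[p^\infty]$; but $L$ is \emph{not} a finitely generated $\Lambda$-module --- it is an increasing union of $\Gamma$-stable $\Zp$-lattices of unbounded rank (the ranks grow precisely because of the sign $-1$ phenomenon you yourself invoke), so any finitely many generators lie in, and generate no more than, a single finite level. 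Hence the asserted ``$H^1(\Gamma_m,-)$-term attached to a finitely generated $\Lambda$-module'' is not available, and controlling $(L/\omega_m^\pm L)[p^\infty]$ is the same problem in disguise. The appeal to Proposition~\ref{prop:M-pm}(3) ``to control the places above $p$'' is a non sequitur (that statement concerns the local modules $\cM^\pm(k_\infty)$), and the ``Mordell--Weil control statement for $E(\cdot)\otimes\Qp/\Zp$ over $K_\infty/K$'' you cite as available is precisely the unsigned version of what is to be proved; it does not follow formally from cofinite generation of $\cM(K_\infty)$.

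The paper closes this gap by a corank sandwich through the signed Selmer groups rather than by a direct Mordell--Weil control argument. Setting $a_m^\pm=\corank\cM^\pm(K_m)$, $b_m^\pm=\corank\cM^\pm(K_\infty)[\omega_m^\pm]$ and $c_m^\pm=\corank\cS^\pm(E/K_m)[\omega_m^\pm]=\corank\cS^\pm(E/K_\infty)[\omega_m^\pm]$ (the last equality is Theorem~\ref{thm:controlpm}(1)), the finiteness of $\sha(E/K_m)[p^\infty]$ and Theorem~\ref{thm:controlpm}(2) give $\rank E(K_m)=c_m^++c_m^-$, while Proposition~\ref{prop:M-pm}(1) gives $\rank E(K_m)=a_m^++a_m^-$; the natural inclusions yield $a_m^\pm\le c_m^\pm$ and $b_m^\pm\le c_m^\pm$, whence $a_m^\pm=c_m^\pm$, so $b_m^\pm\le a_m^\pm$, and injectivity of $s_m^\pm$ gives the reverse inequality. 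To repair your write-up you must supply this Selmer-theoretic input (or some equivalent determination of the invariants $r^\pm$, $t_n$ of Proposition~\ref{prop:M-pm}(2) in terms of the rank growth of $E(K_m)$); it is not a routine adaptation of Kobayashi's control theorem.
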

\begin{proof}
The injectivity of $s_m^\pm$ is a consequence of the injectivity of the restriction map on cohomology (which in turn follows from the fact that $E(K_\infty)[p]=0$ and the inflation-restriction exact sequence). In order to show that the cokernels are finite, it is enough to compare the $\Zp$-coranks of the modules on the two sides.

Let us write 
\begin{align*}
    a_m^\pm&=\corank_{\Zp}\cM^\pm(K_m),\\
    b_m^\pm&=\corank_{\Zp}\cM^\pm(K_\infty)[\omega_m^\pm],\\
    c_m^\pm&=\corank_{\Zp}\cS^\pm(E/K_m)[\omega_m^\pm]=\corank_{\Zp}\cS^\pm(E/K_\infty)[\omega_m^\pm],\\
    r_m&=\rank E(K_m).
\end{align*}
(Note that  the equality in the  definition of $c_m^\pm$ is a consequence of Theorem~\ref{thm:controlpm}(1).)
Since we have assumed that $\sha(E/K_m)[p^\infty]$ is finite,  Theorem~\ref{thm:controlpm}(2) implies that
\[
r_m=c_m^++c_m^-.
\]
Combined with Proposition~\ref{prop:M-pm}(1), we deduce that
\begin{equation}
   r_m=a_m^++a_m^-=c_m^++c_m^-.\label{eq:compare-corank}
\end{equation}
Furthermore, it follows from definitions that there are natural inclusions
\begin{align*}
    \cM^\pm(K_m)&\hookrightarrow \cS^\pm(E/K_m)[\omega_m^\pm],\\\cM^\pm(K_\infty)[\omega_m^\pm]&\hookrightarrow \cS^\pm(E/K_\infty)[\omega_m^\pm] .
\end{align*} In particular, we have the following inequalities
\begin{align}
    a_m^\pm\le c_m^\pm,\label{eq:a-c}\\ b_m^\pm\le c_m^\pm.\label{eq:b-c}
\end{align}
Thus, if we combine \eqref{eq:compare-corank} and \eqref{eq:a-c}, we deduce that $a_m^\pm=c_m^\pm$. Combined this with \eqref{eq:b-c} tells us that $b_m^\pm\le a_m^\pm$. The injectivity of $s_m^\pm$ says that $a_m^\pm\le b_m^\pm$. Therefore, $a_m^\pm=b_m^\pm$ as desired.

 It remains to show that the cardinalities of $\coker s_m^\pm$ are uniformly bounded. As in the proof of \cite[Lemma~4.2.4]{lee20}, we have
 \[
 (\coker s_m^\pm)^\vee\cong \left(\cM^\pm(K_\infty)^\vee/\omega_m^\pm\right)[p^\infty],
 \]
which have bounded cardinalities by Proposition~\ref{prop:M-pm}(2).
\end{proof}

\begin{corollary}\label{cor:global-corank}
The integers $r^\pm$ in Proposition~\ref{prop:M-pm}(2) are both equal to one.
\end{corollary}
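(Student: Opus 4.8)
The plan is to recover $r^\pm$ from the asymptotic behaviour, as $m\to\infty$, of the $\Zp$-coranks that already surfaced in the proof of Theorem~\ref{thm:control-M}, and then to pin it down by comparison with the analogous quantity for $\cS^\pm(E/K_\infty)$, whose $\Lambda$-rank is known to be one. (As an a priori sanity check in one direction, one may note that $\cM^\pm(K_m)\hookrightarrow\cS^\pm(E/K_m)$ for every $m$ — global Kummer classes lying in $\widetilde E^\pm(K_m)$ localize at $v,\overline v$ into $\cM^\pm(K_{m,\p})$ by $G_m$-equivariance and the local plus/minus decomposition — so in the limit $\cM^\pm(K_\infty)^\vee$ is a $\Lambda$-quotient of $\cS^\pm(E/K_\infty)^\vee$, forcing $r^\pm\le 1$; but the comparison below yields both inequalities at once.)

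Concretely, I would first recall from the proof of Theorem~\ref{thm:control-M} the equality of $\Zp$-coranks
\[
\corank_{\Zp}\cM^\pm(K_\infty)[\omega_m^\pm]=\corank_{\Zp}\cS^\pm(E/K_\infty)[\omega_m^\pm]
\]
(this is the chain $b_m^\pm=c_m^\pm$ obtained there), valid for every $m\ge 0$. By Pontryagin duality, $\cM^\pm(K_\infty)[\omega_m^\pm]^\vee\cong\cM^\pm(K_\infty)^\vee/\omega_m^\pm\cM^\pm(K_\infty)^\vee$, so the left-hand side equals $\rank_{\Zp}\bigl(\cM^\pm(K_\infty)^\vee/\omega_m^\pm\bigr)$, and similarly the right-hand side equals $\rank_{\Zp}\bigl(\cS^\pm(E/K_\infty)^\vee/\omega_m^\pm\bigr)$. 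Then I would record the elementary fact that for any finitely generated $\Lambda$-module $N$ of $\Lambda$-rank $r$ one has $\rank_{\Zp}(N/\omega_m^\pm N)=r\cdot\deg(\omega_m^\pm)+O(1)$, with the implied constant depending only on $N$. This is immediate from the structure theorem: $N$ is pseudo-isomorphic to $\Lambda^r$ plus an elementary torsion module, and since $\omega_m^\pm$ is a product of distinct distinguished polynomials, the free part contributes exactly $r\deg(\omega_m^\pm)$, each torsion summand $\Lambda/(g)$ contributes $\deg\gcd(g,\omega_m^\pm)\le\deg g$, and the finite kernel and cokernel of a pseudo-isomorphism do not change $\Zp$-ranks of $\omega_m^\pm$-quotients (a one-line $\Tor$ argument).

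Applying this to $N=\cM^\pm(K_\infty)^\vee$, which has $\Lambda$-rank $r^\pm$ by Proposition~\ref{prop:M-pm}(2), and to $N=\cS^\pm(E/K_\infty)^\vee$, which has $\Lambda$-rank one by Theorem~\ref{thm:controlpm}(3), the displayed equality becomes $r^\pm\deg(\omega_m^\pm)+O(1)=\deg(\omega_m^\pm)+O(1)$, i.e.\ $(r^\pm-1)\deg(\omega_m^\pm)$ is bounded independently of $m$. Since $\deg(\omega_m^\pm)\to\infty$ (each $\omega_m^\pm$ picks up a new cyclotomic factor of degree $\phi(p^n)$ as $n$ runs through the even, resp.\ odd, integers $\le m$), this forces $r^\pm=1$. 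I do not anticipate a genuine obstacle here — the substance is already in Proposition~\ref{prop:M-pm} and Theorems~\ref{thm:controlpm} and \ref{thm:control-M} — and the only point meriting a line of care is that the error terms in the rank computation are uniform in $m$, which holds because they are controlled by the fixed pseudo-isomorphism data of the two $\Lambda$-modules involved.
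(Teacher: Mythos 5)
Your proposal is correct and follows essentially the same route as the paper: both arguments extract $r^\pm$ from the identity $b_m^\pm=c_m^\pm$ established in the proof of Theorem~\ref{thm:control-M}, use Theorem~\ref{thm:controlpm}(3) to get $c_m^\pm=\deg(\omega_m^\pm)+O(1)$, and conclude by comparing growth rates. The extra details you supply (the structure-theorem computation of $\rank_{\Zp}(N/\omega_m^\pm N)$ and the a priori bound $r^\pm\le 1$) are sound but not needed beyond what the paper already records.
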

\begin{proof}
By Theorem~\ref{thm:controlpm}(1) and (3), the $\Zp$-coranks $c_m^\pm$ in the proof of Theorem~\ref{thm:control-M} are give by $\deg(\omega_m^\pm)+O(1)$, which is also equal to $\Zp$-coranks of $ \cM^\pm(K_\infty)[\omega_m^\pm]$. Therefore, the $\Lambda$-coranks of $\cM^\pm(K_\infty)$ are equal to one.
\end{proof}
We have a somewhat more simple version of Theorem~\ref{thm:control-M} for the local plus and minus groups.
\begin{lemma}\label{lem:control-local}
 The $\Lambda$-modules $\cM^\pm(k_m)$ and $\cM^\pm(k_\infty)[\omega_m^\pm]$ are isomorphic.
\end{lemma}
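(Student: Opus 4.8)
The plan is to carry out the argument of Theorem~\ref{thm:control-M} in the local setting, where it simplifies because $\cM^\pm(k_\infty)^\vee$ is $\Lambda$-\emph{free} of rank one (Proposition~\ref{prop:M-pm}(3)), so the obstruction present globally disappears. First I would note that the restriction map induces an injective $\Lambda$-homomorphism $\cM^\pm(k_m)\hookrightarrow\cM^\pm(k_\infty)$, immediately from the inflation--restriction sequence together with $E(k_\infty)[p]=0$ (\cite[Proposition~8.7]{kobayashi03}), just as in the proofs of Theorems~\ref{thm:control} and \ref{thm:control-M}. Since $\widehat E(k_m)^+$ is annihilated by $\omega_m^+$, and hence by Remark~\ref{rk:diff-pm} the group $\cM^-(k_m)=(\tau-1)H^1_-(k_m,\Ep)$ is annihilated by $\omega_m^-$, this injection factors through $\cM^\pm(k_\infty)[\omega_m^\pm]$; it remains to prove surjectivity onto $\cM^\pm(k_\infty)[\omega_m^\pm]$.

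For surjectivity I would first compare $\Zp$-coranks of the two sides, using Remark~\ref{rk:diff-pm}, the local structure results of \cite[\S4]{iovitapollack06} for $H^1_\pm(k_\infty,\Ep)$, and Proposition~\ref{prop:M-pm}(3); with coranks matched the cokernel is finite, and one identifies its Pontryagin dual with $\bigl(\cM^\pm(k_\infty)^\vee/\omega_m^\pm\cM^\pm(k_\infty)^\vee\bigr)[p^\infty]$ exactly as in the proof of Theorem~\ref{thm:control-M} (following \cite[Lemma~4.2.4]{lee20}). By Proposition~\ref{prop:M-pm}(3) this group is $\bigl(\Lambda/\omega_m^\pm\bigr)[p^\infty]$, which vanishes because $\omega_m^\pm$ is a distinguished polynomial and so $\Lambda/\omega_m^\pm$ is $\Zp$-torsion-free. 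Hence the cokernel is trivial. For the plus group the input is cleanest: $\cM^+(k_m)=H^1_+(k_m,\Ep)$, and the statement is then just the local control theorem for the plus local condition (compare the proof of \cite[Theorem~9.3]{kobayashi03}).

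I expect the minus case to be the main obstacle. Since $\cM^-(k_m)=(\tau-1)H^1_-(k_m,\Ep)$ carries an extra factor $T=\tau-1$ while $\omega_m^-$ is itself divisible by $T$, both the corank comparison and the cokernel computation require care; the cleanest route is probably to argue directly from $\cM^-(k_\infty)=\varinjlim_m\cM^-(k_m)$, showing that a class $(\tau-1)z$ with $z\in H^1_-(k_{m'},\Ep)$, $m'\ge m$, that is killed by $\omega_m^-$ forces $z$ into $H^1_-(k_m,\Ep)$, using the coprimality in $\Lambda$ of $T$ with the odd cyclotomic factors $\Phi_n(1+T)$ of $\omega_m^-$.
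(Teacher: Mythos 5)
Your proposal is correct and follows essentially the same route as the paper: injectivity from inflation--restriction, a $\Zp$-corank comparison, and then the vanishing of the finite cokernel because $\cM^\pm(k_\infty)^\vee\cong\Lambda$ forces $\bigl(\Lambda/\omega_m^\pm\bigr)[p^\infty]=0$ (the paper phrases this as an injection of $\Zp$-cofree modules of equal corank, computing the corank of $\cM^\pm(k_m)$ via $\cM(k_m)\cong\Qp/\Zp[G_m]$ and Proposition~\ref{prop:M-pm}(1)). Your extra concern about the minus case is unnecessary: once the coranks are matched, the cofreeness argument treats both signs uniformly, so no separate analysis of the factor $\tau-1$ is needed.
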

\begin{proof}
As before, the restriction in cohomology induces an injection $$\cM^\pm(k_m)\hookrightarrow \cM^\pm(k_\infty)[\omega_m^\pm].$$
Proposition~\ref{prop:M-pm}(3) tells us that $\cM^\pm(k_\infty)[\omega_m^\pm]$ is a cofree $\Zp$-module of corank $\deg(\omega_m^\pm)$. The same is true for $\cM^\pm(k_m)$ by considering the decomposition given by Proposition~\ref{prop:M-pm}(1) and the isomorphism of $G_m$-modules $\cM(k_m)\cong \Qp/\Zp [G_m]$. Therefore, the injection above is also surjective.
\end{proof}

We now define the plus and minus versions of the localization map $\psi_m$.
\begin{defn}
For $0\le m\le \infty$, we define $\psi_m^\pm:\cM^\pm(K_m)\rightarrow \cM^\pm(k_m)$ to be the morphisms induced by the localization map $E(K_m)\rightarrow E(k_m)$.
\end{defn}
\begin{lemma}\label{lem:ker-cotor}
 The maps $\psi_\infty^\pm$ is surjective and $\ker(\psi_\infty^\pm)$ are $\Lambda$-cotorsion.
\end{lemma}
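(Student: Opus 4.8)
The plan is to obtain both assertions as formal consequences of the structure theory for the modules $\cM^\pm$ established in Proposition~\ref{prop:M-pm} and Corollary~\ref{cor:global-corank}. For the surjectivity of $\psi_\infty^\pm$, I would first check it at finite level. The localization map $E(K_m)\otimes\Qp\to E(k_m)\otimes\Qp$ is equivariant for the identification of $G_m$ with the Galois group of $k_m$ over $\Qp$ (recall that $p$ splits in $K$ and is totally ramified in $K_\infty/K$), so it carries $\tEpm(K_m)$ into $\tEpm(k_m)$ and, after passing to Kummer images, the finite-level localization $\cM(K_m)\to\cM(k_m)$ respects the decompositions of Proposition~\ref{prop:M-pm}(1). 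This localization map is surjective: it fits into a commutative square with the surjection $\psi_m$ of Lemma~\ref{lemma:psi-surj} together with the surjective Kummer maps $E(K_m)\otimes\Qp/\Zp\to\cM(K_m)$ and $E(k_m)\otimes\Qp/\Zp\to\cM(k_m)$. Restricting to the plus and minus summands shows that each $\psi_m^\pm$ is surjective, and since $\psi_\infty^\pm=\varinjlim_m\psi_m^\pm$ and direct limits are exact, $\psi_\infty^\pm$ is surjective.

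Next, from the surjection $\psi_\infty^\pm\colon\cM^\pm(K_\infty)\to\cM^\pm(k_\infty)$ I would take Pontryagin duals to obtain a short exact sequence of finitely generated $\Lambda$-modules
\[
0\longrightarrow\cM^\pm(k_\infty)^\vee\longrightarrow\cM^\pm(K_\infty)^\vee\longrightarrow\ker(\psi_\infty^\pm)^\vee\longrightarrow 0.
\]
By Proposition~\ref{prop:M-pm}(3) the first term is free of rank one over $\Lambda$; by Proposition~\ref{prop:M-pm}(2) the middle term embeds with finite cokernel into $\Lambda^{r^\pm}\oplus\bigoplus_{n\in I^\pm}(\Lambda/\Phi_n)^{t_n}$, so, using $r^\pm=1$ from Corollary~\ref{cor:global-corank}, it too has $\Lambda$-rank one. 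Additivity of rank in the displayed sequence then forces $\ker(\psi_\infty^\pm)^\vee$ to have rank zero; being a quotient of a finitely generated $\Lambda$-module it is finitely generated, hence torsion. Therefore $\ker(\psi_\infty^\pm)$ is $\Lambda$-cotorsion.

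I do not expect a genuine obstacle here: the substance is already contained in Proposition~\ref{prop:M-pm} and Corollary~\ref{cor:global-corank}, and what remains is bookkeeping. The only points needing a little care are the compatibility of the finite-level localization with the plus/minus splitting — so that surjectivity descends to each eigen-summand — and the commutation of the constructions involved with $\varinjlim$ and with Pontryagin duality, both of which are routine.
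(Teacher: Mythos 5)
Your proof is correct and follows essentially the same route as the paper: surjectivity from Lemma~\ref{lemma:psi-surj} together with the plus/minus decomposition of Proposition~\ref{prop:M-pm}(1), and cotorsionness from the fact that $\cM^\pm(K_\infty)$ and $\cM^\pm(k_\infty)$ both have $\Lambda$-corank one by Proposition~\ref{prop:M-pm}(2)--(3) and Corollary~\ref{cor:global-corank}. You have merely spelled out the equivariance and the dualized exact sequence that the paper leaves implicit.
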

\begin{proof}
The surjectivity follows from Lemma~\ref{lemma:psi-surj} and the decompositions given in Proposition~\ref{prop:M-pm}(1). Furthermore, Proposition~\ref{prop:M-pm}(3) and Corollary~\ref{cor:global-corank} tell us that $\cM^\pm(K_\infty)$ and $\cM^\pm(k_\infty)$ have the same $\Lambda$-corank, hence the kernels are cotorsion over $\Lambda$ as claimed.
\end{proof}

By Proposition~\ref{prop:M-pm}(1), we have
\[
\ker(\psi_m)\cong \ker(\psi_m^+)\oplus \ker(\psi_m^-). 
\]
It follows from Theorem~\ref{thm:control-M} and Lemma~\ref{lem:control-local} that there are injections
\[
\ker(\psi_m^\pm)\hookrightarrow \ker(\psi_\infty^\pm)[\omega_m^\pm]
\]
with finite cokernels which are bounded uniformly in $m$. Lemma~\ref{lem:ker-cotor} tells us that $\ker(\psi_\infty^\pm)$ are $\Lambda$-cotorsion. From here, we can  estimate the growth of $\delta_m$ as in Method 1.

\begin{remark}\label{rk:final2}
We see that the constants $\lambda^\pm$ and $\mu^\pm$ in Corollary~\ref{cor:final} can be realized as the corresponding Iwasawa invariants of $\ker(\psi_\infty^\pm)^\vee$. In particular, the formulae of the constants $\mu_{E,K}^\pm$ and $\lambda_{E,K}^\pm$ given in Remark~\ref{rk:final} may be rewritten as
\[
\mu_{E,K}^\pm=\mu_0-2\mu(\ker(\psi_\infty^\pm)^\vee),\quad \lambda_{E,K}^\pm=\lambda_0-\lambda'-2\lambda(\ker(\psi_\infty^\pm)^\vee).
\]
\end{remark}
\bibliographystyle{amsalpha}
\bibliography{references}

\end{document}